\theoremstyle{plain}
\newtheorem{theorem}{Theorem}[section]
\newtheorem{lemma}[theorem]{Lemma}
\newtheorem{corollary}[theorem]{Corollary}
\newtheorem{proposition}[theorem]{Proposition}
\theoremstyle{definition}
\newtheorem{remark}[theorem]{Remark}
\newtheorem{definition}[theorem]{Definition}
\numberwithin{equation}{section}
\def\R{\mathbb{R}}
\def\sup{\operatorname{sup}}
\def\Tr{\operatorname{Tr}}
\def\sup{\operatorname{sup}}
\def\dom{\operatorname{dom}}
\theoremstyle{plain}
\numberwithin{equation}{section}
\begin{document}

\title[PMT for Creased Initial Data]{The Positive Mass Theorem for Creased Initial Data}

\author[Kazaras]{Demetre Kazaras}
\address{Department of Mathematics, Michigan State University, East Lansing, MI 48824, USA}
\email{kazarasd@msu.edu}

\author[Khuri]{Marcus Khuri}
\address{Department of Mathematics, Stony Brook University, Stony Brook, NY, 11794, USA}
\email{marcus.khuri@stonybrook.edu}

\author[Lin]{Michael Lin}
\address{Department of Mathematics, Duke University, Durham, NC, 27708, USA}
\email{michael.lin956@duke.edu}



\thanks{M. Khuri acknowledges the support of NSF Grants DMS-2104229 and DMS-2405045.}

\begin{abstract}
We establish a spacetime positive mass theorem and rigidity statement for asymptotically flat spin initial data sets with a codimension one singularity controlled by a matching Bartnik data condition involving spacetime rotations, and discuss applications. This generalizes several previous works on the topic, including results of Miao, Tsang, and Shi-Tam.
\end{abstract}

\maketitle

\section{Introduction} 
\label{sec1} \setcounter{equation}{0}
\setcounter{section}{1}


Given a connected oriented $n$-dimensional manifold with boundary $M^n$ of dimension $n\geq 3$, a Riemannian metric $g$, and a symmetric $2$-tensor $k$, we refer to the triple $(M^n,g,k)$ as an \emph{initial data set}. This terminology refers to the spacetime setting in which $(M^n,g)$ is a spacelike hypersurface with second fundamental form $k$ in an $(n+1)$-dimensional Lorentzian manifold, and the triple plays the role of initial position and velocity for the Einstein equations.  
These objects satisfy the constraint equations
\begin{equation}
\mu=\frac12 \left(R_g+(\mathrm{Tr}_g k)^2-|k|_g^2 \right),\quad\quad J= \mathrm{div}_g\left(k-(\mathrm{Tr}_g k)g\right),
\end{equation}
where $R_g$ is the scalar curvature and $\mu$, $J$ represent the matter energy and momentum densities. It will always be assumed that these latter quantities are integrable $\mu,J\in L^1(M^n)$. We will consider {\emph{asymptotically flat}} initial data sets. This condition means that there is a compact set $\mathcal{C}\subset M^n$ such that $M^n\setminus \mathcal{C}$ consists of a finite number of pairwise disjoint ends $\cup_{\ell=1}^{\ell_0} M_{\ell}^n$, each of which is diffeomorphic to the compliment of a ball in Euclidean space space $\mathbb{R}^n\setminus B$ and in the coordinates $x$ provided by this diffeomorphism
\begin{equation}
|\partial^{l}(g_{ij}-\delta_{ij})(x)|=O(|x|^{-q-l}),\quad l=0,1,2,\qquad
|\partial^{l}k_{ij}(x)|=O(|x|^{-q-1-l}),\quad l=0,1,
\end{equation}
for some $q>\frac{n-2}{2}$. In this setting the ADM energy $E$ and linear momentum $P=(P_1,\dots,P_n)$ of each end are well-defined \cites{Bartnik,chrumass} and given by
\begin{align}
E&=\lim_{r\to\infty} \frac{1}{2(n-1)\omega_{n-1}}\int_{S_r}\sum_{i=1}^n(g_{ij,i}-g_{ii,j})\upsilon^jdA,\\
P_i&=\lim_{r\to\infty} \frac{1}{(n-1)\omega_{n-1}}\int_{S_r}(k_{ij}-(\mathrm{Tr}_gk) g_{ij})\upsilon^jdA,
\end{align}
where $\omega_{n-1}$ denotes the volume of the unit $(n-1)$-dimensional sphere and $\upsilon$ is the unit outward normal to the coordinate sphere $S_r$ in the given end. The Lorentz length $m=\sqrt{E^2-|P|^2}$ is then referred to as the ADM mass of the end. The positive mass theorem (PMT) asserts that if $(M^n,g)$ is complete and the \textit{dominant energy condition} (DEC) $\mu\geq|J|_g$ holds, then $E\geq |P|$. Various versions of this result were established in dimensions $3\leq n\leq 7$ by Schoen-Yau \cite{SY1} and Eichmair \cite{Eichmair} using the Jang equation and by Eichmair-Huang-Lee-Schoen \cite{EHLS} using marginally outer trapped surfaces, it was proven in all dimensions for spin manifolds by Witten \cites{ParkerTaubes,Witten} and Bartnik \cite{Bartnik}, and when $n=3$ spacetime harmonic functions were employed by Hirsch-Kazaras-Khuri \cite{HKK}. Moreover, an alternative approach has been put forth by Lohkamp \cite{Lohkamp} to treat the general case without dimensional or spin restrictions.

In this work we will study the spacetime positive mass theorem for initial data sets with singularities. Let us recall previous results in this direction beginning with the time symmetric case, when $k=0$. One of the earliest such works is due to Shi-Tam \cite{ShiTam}, in the context of the their proof of positivity for the Brown-York quasi-local mass. They consider metrics of regularity $g\in C^0(M^n)\cap C^\infty(M^n\setminus\Sigma^{n-1})$ where $\Sigma^{n-1}\subset M^n$ is a closed hypersurface bounding a precompact region $M^n_-$. They show that in the spin case the PMT is valid if the mean curvatures of $\partial M^n_{\pm}$ agree $H_+ =H_-$, where $M^n_+=M^n \setminus \overline{M}^n_-$ and the unit normal for both sides points out of $M^n_-$. Miao \cite{Miao} generalized this beyond the spin case with a local smoothing argument which only requires $H_- \geq H_+$, and McFeron-Sz\'{e}kelyhidi \cite{McFeronSzekelyhidi} later obtained the same result utilizing a different smoothing technique via Ricci flow. In 3-dimensions Hirsch-Miao-Tsang \cite{HMT} produce a mass formula in this context using harmonic functions. When $n<8$ or $M^n$ is spin, Lee \cite{LeeLip} has established a PMT for metrics of regularity $C^2(M^n\setminus \mathcal{S})\cap \mathrm{Lip}(M^n)$ where the singular set $\mathcal{S}$ has Minkowski dimension less than $n/2$. Using spinorial techniques, Lee-LeFloch \cite{LeeLeFloch} obtain a version of the theorem for Sobolev regularity $C^0(M^n)\cap W^{1,n}_{loc}(M^n)$. In the nonspin case Yuqiao \cite{Yuqiao} imposed extra curvature conditions to find a similar result. Moreover, Grant-Tassotti \cite{GrantTassotti} employed a modification of the Miao smoothing technique to treat the case of metrics with $W^{2,n/2}_{loc}$-regularity. Working in a weaker setting, Li-Mantoulidis \cite{LiMantoulidis} show mass positivity for $g\in L^\infty(M^n)\cap C^2(M^n\setminus \mathcal{S})$ where $\mathcal{S}$ is a codimension 2 submanifold satisfying a type of acute cone-angle condition, while Shi-Tam \cite{ShiTam18} obtain the result for singular sets of codimension at least 2 for metrics having $W^{1,p}_{loc}(M^n)$-regularity where $p>n$. In the case $g\in L^\infty(M^n)\cap C^2(M^n\setminus \mathcal{S})$ for a closed submanifold $\mathcal{S}$ of codimension at least 3, the positive mass theorem is known in dimensions 3 \cite{LiMantoulidis} and 4 \cite{Kaz}. 

Consider now the nontime-symmetric case, where much less is known. Utilizing spacetime harmonic functions in the 3-dimensional setting, Tsang \cite{Tsang2022} generalized the results of Shi-Tam \cite{ShiTam} and Miao \cite{Miao} by observing that the PMT holds if $H_- -H_+ \geq |\pi_- (\cdot,\nu) -\pi_+ (\cdot,\nu)|_{g}$ on $\Sigma^2$, where $\pi_{\pm}=k_{\pm} -(\mathrm{Tr}_{g_{\pm}}k_{\pm})g_{\pm}$ is the momentum tensor associated with $M_{\pm}^3$ and $\nu$ is the unit normal pointing out of $M^3_-$.
In \cite{Shibuya}, Shibuya extended the Lee-LeFloch theorem \cite{LeeLeFloch} to the spacetime setting under the assumption that the dominant energy conditions holds in the distributional sense. Furthermore, it should be mentioned that there are rigidity statements for many of the results discussed above, some of which will be mentioned below. To describe the results of the current paper, we begin with two definitions.

\begin{definition}
{\emph{Bartnik data}} is a closed orientable Riemannian $(n-1)$-manifold $(\Sigma^{n-1},\gamma)$ with a triple $(\mathcal{N},\vec{H},\beta)$ satisfying the following properties. 
\begin{enumerate}[label=(\roman*), topsep=3pt, itemsep=0ex]
\item $\mathcal{N}\to\Sigma^{n-1}$ is a trivial rank two vector bundle with structure group $SO^+(1,1)$, equipped with an invariant bundle metric $\langle\cdot,\cdot\rangle$.
\item $\vec{H}$ is a section of $\mathcal{N}$.
\item $\beta$ is a $1$-form on $\Sigma^{n-1}$, viewed as a connection for $\mathcal{N}$.
\end{enumerate}
\end{definition}

Given Bartnik data, let $\{\nu,\tau\}$ be a framing of $\mathcal{N}$ so that $\nu$ and $\tau$ have squared lengths $+1$ and $-1$.  The correspondence between $\beta$ and its associated connection $\nabla^{\mathcal{N}}$ on $\mathcal{N}$ is given by $\beta(X)=-\langle\nabla_X^{\mathcal{N}}\nu,\tau\rangle$ for tangent vectors $X$ to $\Sigma^{n-1}$. Now consider two such bundles with the same base $\mathcal{N},\mathcal{N}'\to\Sigma^{n-1}$, equipped with framings $\{\nu,\tau\}$ and $\{\nu',\tau'\}$. With respect to these frames, an isometric bundle map $F:\mathcal{N}\to\mathcal{N}'$ may be expressed fiberwise as a family of $SO^+(1,1)$ matrices
\begin{equation}
F|_{\mathcal{N}_x}=\left[\begin{array}{cc}
\cosh(f(x))&-\sinh(f(x))\\
-\sinh(f(x))&\cosh(f(x))
\end{array}\right],\quad\quad\quad x\in\Sigma^{n-1},
\end{equation}
for some uniquely determined $f\in C^\infty(\Sigma^{n-1})$ called the hyperbolic angle of $F$. Two sets of Bartnik data $(\mathcal{N},\vec{H},\beta)$ and $(\mathcal{N}',\vec{H}',\beta')$ on $(\Sigma^{n-1},\gamma)$ are said to be {\emph{equivalent}} if there is an isometric bundle map $F:\mathcal{N}\to\mathcal{N}'$, such that $F\circ\vec{H}=\vec{H}'$ and $\beta'=\beta +df$ where $f$ is the hyperbolic angle of rotation induced by $F$.

A fundamental source of Bartnik data over $(\Sigma^{n-1},\gamma)$ arises from isometric embeddings into initial data sets $(M^n,g,k)$. If $\nu$ is a unit normal vector field to $\Sigma^{n-1}\subset M^n$, then an associated set of Bartnik data may be described as follows. Let $\mathcal{N}$ be the Whitney sum of the normal bundle to $\Sigma^{n-1}$ in $M^n$ together with a trivial line bundle spanned by a vector $\tau$
which we declare to be orthogonal to $\nu$ and of squared length $-1$,
and set
\begin{equation}
\vec{H} = H \nu - (\Tr_\gamma k) \tau, \quad\quad\quad \beta(X)= k(\nu, X),
\end{equation}
where $H$ is the mean curvature of $\Sigma^{n-1}$ with respect to $\nu$ and $X\in T\Sigma^{n-1}$. We will now fix the setting of our main theorem by introducing the necessary geometric condition. 

\begin{definition}\label{def:DECcreased}
An {\emph{asymptotically flat DEC-creased initial data set}} consists of two smooth initial data sets with boundary, an asymptotically flat $(M_+^n,g_+,k_+)$ and a compact $(M_-^n,g_-,k_-)$, satisfying the following properties.
\begin{enumerate}[label=(\roman*), topsep=3pt, itemsep=0ex]
\item The induced boundary manifolds $(\partial M_+^n,g|_{\partial M_+^n})$ and $(\partial M_-^n,g|_{\partial M_-^n})$ are equal to a common manifold $(\Sigma^{n-1},\gamma)$ which inherits Bartnik data $(\mathcal{N}_\pm,\vec{H}_\pm,\beta_\pm)$ 
induced by the unit normals $\nu_+$ pointing into $(M_+^n,g_+,k_+)$ and $\nu_-$ pointing out of $(M_-^n,g_-,k_-)$.
\item There is an isometric bundle map $F:\mathcal{N}_-\to\mathcal{N}_+$ with hyperbolic angle $f$ so that \begin{align}\label{eq:DECCreased}
\left\langle F( \vec{H}_-) - \vec{H}_+, \nu_+ \right\rangle_{\mathcal{N}_+} - \sqrt{ \left\langle F(\vec{H}_-) - \vec{H}_+, \tau_+ \right\rangle_{\mathcal{N}_+}^2 + \left|\beta^\Delta \right|_{\gamma}^2} \geq 0,
\end{align}
where $\{\nu_+,\tau_+\}$ is an $SO^+(1,1)$ frame for $\mathcal{N}_+$ and $\beta^\Delta$ denotes the difference of connections $\beta^\Delta=\beta_+-\beta_--df$.
\end{enumerate}
\end{definition}

\begin{figure}[h]
\includegraphics[width=3in]{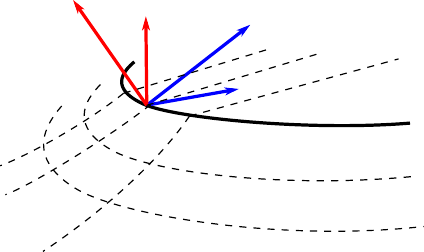}
\begin{picture}(0,0)
\put(-40,78){$M_+^n$}
\put(-5,62){$\Sigma^{n-1}$}
\put(-220,10){$M_-^n$}
\put(-190,115){$\tau_-$}
\put(-160,110){$\tau_+$}
\put(-116,111){$\nu_-$}
\put(-93,84){$\nu_+$}
\end{picture}
\caption{DEC-creased initial data arising from a spacetime.}
    \label{fig:picture}
    \vspace{-.1in}
\end{figure}

In the context of a smooth spacetime, DEC-creased initial data naturally arise from a spacelike hypersurface $M^n$ which is globally Lipschitz and smooth away from an embedded submanifold $\Sigma^{n-1}$, along which $M$ is bent. Given DEC-creased initial data $(M^n_\pm,g_\pm,k_\pm)$, one may glue along $\Sigma^{n-1}$ to form the space $M^n=M^n_-\cup_{\Sigma^{n-1}} M_+^n$, which has a canonical smooth structure using collard neighborhoods \cite{JLee}*{Theorem 9.29}. The metrics $g_-$ and $g_+$ descend to a metric on $M^n$ with regularity $C^0(M^n)\cap C^\infty(M^n\setminus\Sigma^{n-1})$. The union of $k_-$ and $k_+$ does not descend continuously to $M^n$, but may be considered as an $L^\infty$ tensor, with generally different limits on $\Sigma^{n-1}$ as one approaches from $M^n_-$ and $M^n_+$. As will be elucidated through the proof of the main result, the crease condition \eqref{eq:DECCreased} behaves analogously to a weak version of the dominant energy condition along $\Sigma^{n-1}$. The inequality \eqref{eq:DECCreased} is, however, more general than the requirement that $\mu\geq|J|$ holds distributionally along the crease.
We also remark that \eqref{eq:DECCreased} is equivalent to the requirements that $F\circ \vec{H}_--\vec{H}_+$ is spacelike, points in the $\nu_+$ direction, and satisfies 
\begin{equation}
|F(\vec{H}_-)-\vec{H}_+|_{\mathcal{N}_+}\geq|\beta^\Delta|_{\gamma}.
\end{equation}


In Definition \ref{def:DECcreased} there are two trivial framed $SO^+(1,1)$-bundles $\mathcal{N}_\pm$ over $\Sigma^{n-1}$. In this situation, there is always a canonical choice of bundle isometry $F$ which takes $\nu_-$ to $\nu_+$. We will call this map the {\emph{trivial isometry}}.
In the special case where the bundle map $F$ is trivial, the DEC-creased condition $(ii)$ in Definition \ref{def:DECcreased} becomes the ``spacetime corner" condition appearing in \cite{Tsang2022}. When $k\equiv0$ and $F$ is trivial, then the DEC-creased condition becomes the mean curvature inequality $H_-\geq H_+$ that is used in \cite{Miao}.
Our main result is the following positive mass theorem.

\begin{theorem}\label{theorem-main:PMT}
Let $(M_\pm^n,g_\pm,k_\pm)$ be an asymptotically flat DEC-creased initial data set, creased along a closed hypersurface $\Sigma^{n-1}$. Assume that $M^n=M_+^n\cup_{\Sigma^{n-1}} M^n_-$ is spin. If $(M_\pm^n,g_\pm,k_\pm)$ both satisfy the dominant energy condition, then the ADM 4-momentum of any end of $(M_+^n,g_+,k_+)$ is nonspacelike, that is $E\geq |P|$.
\end{theorem}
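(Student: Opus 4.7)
The plan is to adapt Witten's spinorial proof of the spacetime positive mass theorem to the creased setting, by formulating a Dirac--Witten transmission boundary value problem across $\Sigma^{n-1}$. On each smooth piece $M_\pm^n$, using an auxiliary trivial timelike direction $\tau_\pm$ supplied by the Bartnik data, I would form the modified spin connection $\nabla^{DW}_X\psi = \nabla^{\mathrm{spin}}_X\psi + \tfrac12 k_\pm(X,e_i)\,\tau_\pm\cdot e_i\cdot\psi$ and the associated Dirac--Witten operator $\mathcal{D}_\pm\psi = \sum_i e_i\cdot\nabla^{DW}_{e_i}\psi$. The spacetime Lichnerowicz--Weitzenb\"ock identity
\begin{equation*}
\mathcal{D}_\pm^*\mathcal{D}_\pm \;=\; (\nabla^{DW})^*\nabla^{DW}\;+\;\tfrac12\bigl(\mu_\pm + J_\pm(e_i)\,e_i\cdot\tau_\pm\cdot\bigr)
\end{equation*}
then has pointwise nonnegative curvature endomorphism thanks to the DEC on $(M_\pm^n,g_\pm,k_\pm)$.

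The key new ingredient is the choice of matching condition along $\Sigma^{n-1}$. The $SO^+(1,1)$ isometry $F:\mathcal{N}_-\to\mathcal{N}_+$ with hyperbolic angle $f$ lifts via the spin double cover to a bundle isomorphism $\widetilde F : S_-|_\Sigma \to S_+|_\Sigma$ whose fibrewise action is the Clifford exponential $\exp(\tfrac{f}{2}\nu_+\cdot\tau_+)$. I would impose the transmission condition $\psi_+|_\Sigma = \widetilde F(\psi_-|_\Sigma)$, which is self-adjoint and elliptic. Using standard Fredholm theory for Dirac-type operators on asymptotically flat manifolds (along the lines of Parker--Taubes and Bartnik, adapted to include an elliptic transmission condition), one produces a Dirac--Witten-harmonic pair $(\psi_+,\psi_-)$ solving $\mathcal{D}_\pm\psi_\pm = 0$, satisfying the transmission condition on $\Sigma$, and asymptotic to any prescribed constant spinor $\psi_0$ at infinity.

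Applying the Weitzenb\"ock identity on each side, integrating by parts, and summing yields the creased Witten identity
\begin{equation*}
c_n\bigl(E|\psi_0|^2 - P_i\langle e_i\cdot\tau_+\cdot\psi_0,\psi_0\rangle\bigr) \;=\; \sum_{\pm}\int_{M_\pm^n}\Bigl(|\nabla^{DW}\psi_\pm|^2 + \tfrac12\langle(\mu_\pm+J_\pm(e_i)e_i\cdot\tau_\pm\cdot)\psi_\pm,\psi_\pm\rangle\Bigr)\,dV + \mathcal{B}_\Sigma,
\end{equation*}
with a positive normalization constant $c_n$ and $\mathcal{B}_\Sigma$ the sum of the two boundary integrals along $\Sigma$. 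The bulk integrand is nonnegative by DEC. Optimizing the left-hand side over unit constant spinors $\psi_0$ in the standard way will then yield $E\geq|P|$ as soon as $\mathcal{B}_\Sigma\geq 0$.

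The main obstacle, and where the crease hypothesis enters, is the non-negativity of $\mathcal{B}_\Sigma$. Each side contributes, after introducing the intrinsic Dirac operator of $\Sigma^{n-1}$, a term of the form $\tfrac12\int_\Sigma\bigl(\langle\vec H_\pm\cdot\psi_\pm,\psi_\pm\rangle_{\mathcal{N}_\pm}+\beta_\pm(e_a)\langle e_a\cdot\tau_\pm\cdot\psi_\pm,\psi_\pm\rangle\bigr)\,dA$, with signs induced by the outward normals. Transporting the $M_-$-contribution to the $M_+$-side via $\widetilde F$, the $\vec H$-pieces combine into $\tfrac12\langle(F(\vec H_-)-\vec H_+)\cdot\psi_+,\psi_+\rangle$, while the $\beta$-pieces combine into $\tfrac12\beta^\Delta(e_a)\langle e_a\cdot\tau_+\cdot\psi_+,\psi_+\rangle$ --- the $-df$ term in $\beta^\Delta$ appearing precisely from tangential differentiation of the spin boost $\widetilde F$. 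A pointwise Cauchy--Schwarz/Clifford eigenvalue estimate on the second piece together with the $\{\nu_+,\tau_+\}$-decomposition of $F(\vec H_-)-\vec H_+$ then reduces $\mathcal{B}_\Sigma\geq 0$ to the algebraic inequality \eqref{eq:DECCreased}. The delicate technical task is to track Clifford sign conventions so that the resulting pointwise inequality matches \eqref{eq:DECCreased} as stated, rather than its reverse; once this is settled, the crease hypothesis delivers $\mathcal{B}_\Sigma\geq 0$ and the theorem follows.
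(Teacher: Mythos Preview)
Your proposal is correct and follows essentially the same strategy as the paper: the transmission condition $\psi_+|_\Sigma=\widetilde F(\psi_-|_\Sigma)$ with $\widetilde F=\exp(\tfrac{f}{2}\nu_+\tau_+)=\cosh(\tfrac{f}{2})+\sinh(\tfrac{f}{2})\epsilon_+$ is precisely the paper's boundary condition $\Phi(\psi_-)=(A+B\epsilon_+)\psi_+$, and your outline of the boundary computation (including the observation that tangential differentiation of the boost produces the $-df$ contribution to $\beta^\Delta$) matches the paper's Proposition~3.3. The only minor deviation is that the paper obtains existence not via abstract Fredholm theory but through a Lax--Milgram/Riesz argument, using the Weitzenb\"ock identity together with the DEC and the crease inequality to establish coercivity of $D_W$ on the Hilbert space $\mathcal{H}$ encoding the transmission condition, and then verifies ellipticity/regularity in the B\"ar--Ballmann framework; this distinction is largely cosmetic.
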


The proof of Theorem \ref{theorem-main:PMT} relies on a formula for the mass of $(M_+^n,g_+,k_+)$ involving certain spinors defined over $M^n$, stated below as Theorem \ref{theorem-main:existence}. This basic strategy goes back to Witten \cite{Witten}, where one constructs a 
twisted Dirac operator $D_W$, and proceeds by integrating the Lichnerowicz-Schr{\"o}dinger-Weitzenb{\"o}ck formula applied to asymptotically constant spinors satisfying $D_W\psi=0$. In order to deal with the crease singularities, we introduce transmission-type boundary conditions along $\Sigma^{n-1}$ and show that sufficiently regular solutions to the associated boundary value problem exist. To describe this in more detail, 
we first define $\mathrm{Spin}^+(n,1)$ spinor bundles $S_\pm$ over $M_\pm^n$ so that the trivial isometry $\mathcal{N}_-\to\mathcal{N}_+$ induces an isometry $\Phi:S_-|_{\Sigma^{n-1}}\to S_+|_{\Sigma^{n-1}}$.  
The volume form $\epsilon_+$ of $\mathcal{N}_+$ acts as an endomorphism on $S_\pm|_{\Sigma^{n-1}}$ by Clifford multiplication $\epsilon_+=\nu_+\tau_+$. On a given end $M^n_{\ell_1}$ of $M_+^n$, the spin structure can be identified with a trivial spin structure over $\mathbb{R}^n$, giving a source of {\emph{asymptotically constant}} spinors defined over the end. Given a choice of such a spinor $\psi_\infty$, we seek solutions $\psi_\pm\in \Gamma(M^n_\pm,S_\pm)$ to the system 
\begin{equation}\label{eq:mainPDE}
    \begin{cases}
        D_W \psi_+=0&\text{ in }M_+^n ,\\
        D_W \psi_-=0&\text{ in }M_-^n ,\\
        \Phi(\psi_-)=(A+B\epsilon_+)\psi_+&\text{ on }\Sigma^{n-1} ,\\
        |\psi_+-\psi_\infty|(x)\to0&\text{ as }|x|\to\infty \text{ in }M^n_{\ell_1},\\
        |\psi_+|(x)\to0&\text{ as }|x|\to\infty \text{ in }M^n_\ell, \text{ }\ell\neq\ell_1,
    \end{cases}
\end{equation}
where $A=\cosh(f/2)$, $B=\sinh(f/2)$, and $f$ is the hyperbolic angle given by the DEC-creased condition. A geometric interpretation of this system is provided in Remark \ref{rem:boundarycondition} below. We treat \eqref{eq:mainPDE} as a system with boundary conditions, establishing the following existence and regularity statement.

\begin{theorem}\label{theorem-main:existence}
Suppose that $(M_\pm^n,g_\pm,k_\pm)$ is an asymptotically flat DEC-creased initial data set, such that $M^n=M_-^n\cup_{\Sigma^{n-1}}M_+^n$ is spin and the dominant energy condition is satisfied on $M_{\pm}^n$.
Given a constant spinor $\psi_\infty$ on an asymptotically flat end of $M^n_+$, there exists a smooth solution $\psi_\pm\in C^\infty(M_\pm^n,S_\pm)$\footnote{The spinors $\psi_\pm$ are both smooth up to the boundary of $M_\pm^n$, though they do not necessarily define a continuous spinor over $M^n$.} to \eqref{eq:mainPDE} which satisfies
\begin{align}
\begin{split}
\frac{(n-1)\omega_{n-1}}{2}  \left(E|\psi_\infty|^2-\left\langle\psi_\infty,P \tau\psi_\infty\right\rangle\right)\geq &\int_{M_-^n}\left(|\overline{\nabla}\psi_-|^2+\frac12(\mu_-|\psi_-|^2+\left\langle\psi_-,J_-\tau_-\psi_-\right\rangle)\right)dV\\
{}+&\int_{M_+^n}\left(|\overline{\nabla}\psi_+|^2+\frac12(\mu_+|\psi_+|^2+\left\langle\psi_+,J_+\tau_+\psi_+\right\rangle)\right)dV,
\end{split}
\end{align}
where $\tau,\tau_\pm$ are endomorphisms of the spinor bundles which square to the identity.
\end{theorem}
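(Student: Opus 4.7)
The plan is to follow Witten's original method, adapted to the creased setting. As a functional framework, I would consider pairs $(\psi_-,\psi_+)$ in weighted Sobolev spaces $H^1_{-\delta}(M_\pm^n,S_\pm)$ for some $\delta\in(\tfrac{n-2}{2},q)$, subject to the transmission condition $\Phi(\psi_-)=(A+B\epsilon_+)\psi_+$ along $\Sigma^{n-1}$ and, for $\psi_+$, subtracting out a cutoff extension of $\psi_\infty$ on the distinguished end. The first step is to verify that this transmission condition is a local elliptic boundary condition for the operator $D_W\oplus D_W$ on $M_-^n\sqcup M_+^n$. Since $\epsilon_+^2=\id$ and $\epsilon_+$ anticommutes with Clifford multiplication by vectors tangent to $\Sigma^{n-1}$, the operator $A+B\epsilon_+=\cosh(f/2)+\sinh(f/2)\epsilon_+$ acts as a hyperbolic rotation intertwining the $\pm 1$ eigenspaces of $\epsilon_+$, and the Shapiro-Lopatinski condition can be checked by direct symbolic computation. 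Standard elliptic regularity up to $\Sigma^{n-1}$ then follows.

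The second step is the key identity. Applying the Lichnerowicz-Schr\"odinger-Weitzenb\"ock formula $D_W^\ast D_W=\overline{\nabla}^\ast\overline{\nabla}+\mathcal{R}_\pm$ on each $M_\pm^n$, where the curvature endomorphism $\mathcal{R}_\pm$ is pointwise nonnegative under the dominant energy condition, integrating against $\psi_\pm$, and integrating by parts yields: the desired bulk terms, a boundary integral at infinity that reproduces $\tfrac{(n-1)\omega_{n-1}}{2}(E|\psi_\infty|^2-\langle\psi_\infty,P\tau\psi_\infty\rangle)$ after expanding $\psi_+$ near $\psi_\infty$, and two boundary integrals over $\Sigma^{n-1}$ from the two sides involving the hypersurface Dirac operators, the mean curvature vectors $\vec{H}_\pm$, and the $1$-forms $\beta_\pm$. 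The central technical step is to show that once the transmission condition is invoked, the sum of the two $\Sigma^{n-1}$-integrals assembles into a pointwise quadratic form in $\psi_+$ whose endomorphism is nonnegative precisely when the DEC-creased inequality \eqref{eq:DECCreased} holds; this is where the hyperbolic rotation $A+B\epsilon_+$ is tailored at the spinor level to cancel the connection discrepancy $df$ between $\beta_+$ and $\beta_-$ and to align $F(\vec{H}_-)$ with $\vec{H}_+$.

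For existence, after reducing to a problem with homogeneous transmission condition and compactly supported inhomogeneity, the identity of the previous step provides the decisive coercivity estimate: any smooth kernel element with trivial asymptotic data has vanishing boundary integral at infinity, nonnegative crease contribution by DEC-creased, and nonnegative bulk contribution by DEC, forcing $\overline{\nabla}\psi_\pm\equiv 0$ and, combined with the decay, $\psi_\pm\equiv 0$. Fredholm theory for elliptic boundary value problems on asymptotically flat manifolds, together with a duality argument exploiting the formal self-adjointness of the transmission boundary condition, then yields surjectivity, and applying the Weitzenb\"ock identity to the resulting $\psi_\pm$ produces the claimed mass inequality. The main obstacle I expect is the boundary bookkeeping in the second step: tracking how the trivial normal-bundle isometry lifts to the spinorial isometry $\Phi$, verifying that $A+B\epsilon_+$ precisely absorbs the connection correction $df$, and confirming that the residual crease contribution assembles into the frame-invariant combination in \eqref{eq:DECCreased} rather than a frame-dependent expression.
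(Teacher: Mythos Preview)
Your proposal is correct and follows the same overall architecture as the paper: Witten's method adapted to the creased setting, with the decisive boundary computation along $\Sigma^{n-1}$ (the paper's Proposition~\ref{p:BoundaryTerms}) showing that the two crease integrals combine, under the transmission condition $\Phi(\psi_-)=(A+B\epsilon_+)\psi_+$, into a quadratic form controlled exactly by the DEC-creased quantity.

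Two implementation choices differ. For boundary regularity you propose checking Shapiro--Lopatinski directly, while the paper works in the B\"ar--Ballmann framework of hybrid Sobolev spaces $\check H(\mathcal{D}^{\partial M'})$ and verifies that $\mathcal{K}-Q_{<0}$ is an elliptic pseudodifferential operator of order zero; the paper explicitly remarks that Shapiro--Lopatinski is a valid alternative. More substantively, for surjectivity you invoke Fredholm theory on asymptotically flat manifolds together with self-adjointness of the transmission condition, whereas the paper bypasses Fredholmness altogether: it uses the coercivity estimate $\|\psi\|_{W^{1,2}_{-q_*}}\leq C\|D_W\psi\|_{L^2}$ to make $\langle D_W\cdot,D_W\cdot\rangle_{L^2}$ an equivalent inner product on $\mathcal{H}$, applies Riesz representation to produce $\omega$ with $D_W\omega-\eta\perp D_W(\mathcal{H})$, and then proves (Proposition~\ref{BoundaryReg}) that this $L^2$ remainder already lies in $\mathcal{H}$ and hence vanishes by injectivity. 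Your route is conceptually cleaner but requires Fredholm machinery that simultaneously handles the elliptic boundary condition at $\Sigma^{n-1}$ and the weighted behavior at infinity; the paper's route trades this for a boundary-regularity lemma showing that weak $L^2$ solutions orthogonal to $D_W(\mathcal{H})$ automatically satisfy the transmission condition and belong to $H^1_{\mathrm{loc}}$.
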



\vspace{12pt}

\noindent{}{\bf{The case of equality.}}  Rigidity statements for the classical positive mass theorem characterize those initial data sets $(M^n,g,k)$ satisfying the dominant energy condition which have an end of vanishing ADM mass. Generally speaking, these statements aim to conclude that $(M^n,g,k)$ arises from an embedding into Minkowski space, which is to say that $(M^n,g)$ is isometrically embedded with second fundamental form equal to $k$. In the time-symmetric setting when $k=0$, Schoen-Yau \cite{SY1} showed that the mass of an end is zero only when the manifold is isometric to Euclidean space for $n\leq 7$. In the nontime-symmetric case, Beig-Chru\'{s}ciel \cite{BeigChrusciel} and Chru\'{s}ciel-Maerten \cite{ChruscielMaerten} considered spin manifolds, imposed additional and necessary decay assumptions, showing that $E=|P|$ implies $E=|P|=0$ and that such $(M^n,g,k)$ must arise as a hypersurface of Minkowski space. Also assuming additional decay assumptions, Huang-Lee \cites{HuangLee,HuangLee2} showed that the desired rigidity statement follows from the positive mass inequality. We also point out the work of Hirsch-Zhang \cites{svenyiyue,HirschZhang}, who give the optimal result in dimension 3 and characterize general spin initial data with vanishing ADM mass in terms of pp-wave spacetimes. In the setting of Theorem \ref{theorem-main:PMT}, we show the following rigidity statement. Note that additional decay beyond the usual asymptotic flatness assumption is imposed, using weighted H{\"o}lder spaces (see \cite{HuangLee}).

\begin{theorem}\label{thm:rigidity}
Suppose that $(M_\pm^n,g_\pm,k_\pm)$ is an asymptotically flat DEC-creased initial data set, such that $M^n=M_-^n\cup_{\Sigma^{n-1}}M_+^n$ is spin and the dominant energy condition is satisfied on $M_{\pm}^n$. Assume there is some $\alpha\in(0,1)$ and $\varsigma>0$ such that 
\begin{equation}
g_+-\delta \in C^{3,\alpha}_{-q}(M^n_+),\quad\quad k_+ \in C^{2,\alpha}_{-q-1}(M^n_+),\quad\quad \mu,J\in C^{1,\alpha}_{-n-\varsigma}(M^n_+),
\end{equation}
where the order of asymptotically flat decay satisfies $q>\max(1/2,n-3)$. If the ADM mass of any end vanishes, then there is Lipschitz embedding $M^n\to \mathbb{R}^{1,n}$ to Minkowski space which is smooth away from $\Sigma^{n-1}$, such that the induced metric and second fundamental form agree with $g_-\cup g_+$ and $k_-\cup k_+$.

\end{theorem}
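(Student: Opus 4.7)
The plan is to apply the mass inequality of Theorem \ref{theorem-main:existence} in its equality case, extract a large supply of Witten-parallel spinors on each side of the crease, and then use these to construct matching isometric embeddings of $M^n_\pm$ into Minkowski space. \emph{Step 1:} Since $E=|P|$, the Hermitian form $\psi\mapsto E|\psi|^2-\langle\psi,P\tau\psi\rangle$ on constant spinors has a nontrivial kernel. Choosing $\psi_\infty$ in this kernel makes the left-hand side of the mass inequality vanish, and combined with the pointwise bound $\mu_\pm|\psi_\pm|^2+\langle\psi_\pm,J_\pm\tau_\pm\psi_\pm\rangle\geq 0$ under DEC (which follows from $\tau_\pm^2=\mathrm{id}$ and $\mu\geq|J|$), we conclude $\overline{\nabla}\psi_\pm\equiv 0$ on $M^n_\pm$ and vanishing matter terms. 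Invoking the density and boost arguments of Huang--Lee, the extra weighted-H\"older decay hypothesis promotes $E=|P|$ to $E=P=0$, so that every constant $\psi_\infty$ at infinity produces a Witten-parallel $\psi_\pm$, yielding a full basis $\{\psi^{(a)}_\pm\}$ of parallel spinors on each side.

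\emph{Step 2:} From the parallel spinors, standard Witten-type algebra produces a collection of parallel vector fields---the bilinear pairings $\langle\psi^{(a)}_\pm, e_\mu\cdot\psi^{(b)}_\pm\rangle$---which assemble into a parallel Lorentzian frame in a natural rank $(n+1)$ vector bundle over $M^n_\pm$ equipped with a flat connection. Following the rigidity arguments of Beig--Chru\'{s}ciel, Chru\'{s}ciel--Maerten, and Huang--Lee, this parallel frame integrates to a smooth isometric embedding $\Phi_\pm:M^n_\pm\to\mathbb{R}^{1,n}$ whose induced metric is $g_\pm$ and whose second fundamental form is $k_\pm$. On the asymptotically flat piece $M^n_+$ this is the classical construction; on the compact piece $M^n_-$ the same developing procedure applies, yielding a smooth isometric embedding once a compatible initial condition is fixed at a basepoint.

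\emph{Step 3 and the main obstacle:} To glue $\Phi_-$ and $\Phi_+$ along $\Sigma^{n-1}$, we use the transmission condition $\Phi(\psi_-)=(\cosh(f/2)+\sinh(f/2)\epsilon_+)\psi_+$, which is precisely the $\mathrm{Spin}^+(1,1)$-cover of the hyperbolic rotation of angle $f$ in the $(\nu_+,\tau_+)$-plane---i.e., the spinor lift of the bundle map $F:\mathcal{N}_-\to\mathcal{N}_+$ of Definition \ref{def:DECcreased}. Consequently the bilinear spinor pairings, and hence the developed Lorentzian frames, on the two sides agree along $\Sigma^{n-1}$ up to this rotation of the normal pair. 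After applying one global Lorentz transformation of $\mathbb{R}^{1,n}$ to $\Phi_-$, the two embeddings assemble into a continuous, globally Lipschitz map $M^n\to\mathbb{R}^{1,n}$, smooth away from $\Sigma^{n-1}$, realizing $(g_-\cup g_+,k_-\cup k_+)$. The main obstacle is this matching step: one must verify the Clifford-algebra identity showing that the half-angle spinorial transmission encodes the full-angle vector-level rotation $F$, and then check that the two developments---from the asymptotically flat end and from the compact piece---align along $\Sigma^{n-1}$ after a single global Lorentz transformation, rather than requiring a pointwise correction along $\Sigma^{n-1}$.
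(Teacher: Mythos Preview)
Your overall architecture---use the mass formula in the equality case to produce Sen-parallel spinors, upgrade $E=|P|$ to $E=|P|=0$, obtain a full basis of parallel spinors, and then build an embedding into Minkowski---matches the paper's strategy. Step 1 is essentially correct (the paper cites Chru\'{s}ciel--Maerten rather than Huang--Lee for the upgrade, but either route is fine given the extra decay hypotheses).

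However, Step 3 is the heart of the matter, and the phrase ``after applying one global Lorentz transformation of $\mathbb{R}^{1,n}$ to $\Phi_-$'' is not right as stated, and your own caveat that this is ``the main obstacle'' is accurate. The hyperbolic angle $f$ varies along $\Sigma^{n-1}$, so the frames $(\nu_-,\tau_-)$ and $(\nu_+,\tau_+)$ differ by a \emph{pointwise} rotation, not a constant one. What is true---and what the paper proves carefully as Proposition~\ref{prop:LScomponents}---is that for a transmission-compatible pair of parallel spinors the associated lapse-shift pairs $(u_\pm,Y_\pm)$ satisfy the Lorentz relations \eqref{eq:LorentzRelation}: the tangential components of $Y_\pm$ agree on $\Sigma^{n-1}$, and the normal component together with $u$ transform by the $SO^+(1,1)$ rotation of angle $f$. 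The upshot is that the spacetime vectors $X_a^\pm=u_\pm\pmb{\tau}_\pm+Y_\pm$ are \emph{equal} as vectors along $\mathbb{R}\times\Sigma^{n-1}$ (equations \eqref{eq:Xiagree} and \eqref{eq:alphanus}), so no global correction is needed. Without this computation, you have no mechanism to rule out the ``pointwise correction along $\Sigma^{n-1}$'' you worry about.

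The paper also does not develop $M^n_\pm$ separately; instead it first builds a single Killing development $N^{n+1}$ of the glued $M^n$ from a timelike lapse-shift pair (and checks $u_0^\pm>0$ via an ODE argument), then integrates the closed $1$-forms $\alpha_a^\pm$ on the universal cover. The well-definedness of these integrals across the crease is a Stokes argument using precisely the matching $\alpha_a^+|_{\Sigma}=\alpha_a^-|_{\Sigma}$ from Proposition~\ref{prop:LScomponents}. Your sketch also omits the topological step: one must pass to the universal cover, show the map $\mathcal{V}$ is a local homeomorphism even across $\widehat\Sigma^n$, use completeness to promote it to a covering map, and then argue from the end count that $\widetilde{N}^{n+1}=N^{n+1}$. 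In your framework, the analogous issue is that the developing map on the compact piece $M^n_-$ could have monodromy if $M^n_-$ is not simply connected, which you do not address.
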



There are previously known rigidity statements when $m=0$ on an asympotically flat manifold with singularities along a hypersurface $\Sigma^{n-1}$. In the time-symmetric setting where $R_g\geq0$ and $H_-\geq H_+$ along $\Sigma^{n-1}$, Shi-Tam \cite{ShiTam} used spinorial methods to show that $g$ is flat away from $\Sigma$. When $n=3$ and $\Sigma^2\cong S^2$, Miao \cite{Miao} showed there is a $C^{2,1}$ isometry $M^3\to\mathbb{R}^3$. In higher dimensions, using the Ricci flow on manifolds for which mass positivity is known, McFarron-Sz{\'e}kelyhidi \cite{McFeronSzekelyhidi} showed a similar statement but with $C^{2,\alpha}$-regularity, $\alpha\in(0,1)$. To the authors' knowledge, the optimal rigidity statement in this singular time-symmetric setting appears to be unknown in general. In his study of corner singularities (where $F$ is trivial) in dimension 3, Tsang \cite{Tsang2022} proved two rigidity statements: if $E=|P|$ then $M^3$ is diffeomorphic to $\mathbb{R}^3$, and if $E=|P|=0$ and additional regularity is imposed on $k$, then $(M^3,g,k)$ arises from an isometric embedding into Minkowski spacetime and $(g,k)\in C^{2,1}(M^3)\times C^{1,1}(M^3)$. We also point out the recent work of Hirsch-Huang \cite{LanSven}, concerning further results that characterize the case of vanishing mass in the presence of matching Bartnik data.

The proof of Theorem \ref{thm:rigidity} proceeds in three steps. First, the mass formula of Theorem \ref{theorem-main:existence} allows us to obtain a spinor whose corresponding vector field defines a type of Killing lapse-shift pair. Then, the work of Huang-Lee \cite{HuangLee} is used to show that the vanishing of $m$ implies that $E=|P|=0$. This is in turn leveraged to make stronger use of the mass formula, finding many linearly independent Sen-parallel spinors which are used to construct a flat Killing development following Beig-Chrusciel \cite{BeigChrusciel}. 

\vspace{12pt}

\noindent{\bf{Extensions and fill-ins.}} As an application of the creased positive mass theorem, one can establish the nonexistence of certain {\emph{fill-ins}} or {\emph{null-bordisms}} of given Bartnik data arising from an embedding into Minkowski spacetime. 

\begin{corollary}\label{cor:fillin}
Suppose $(\Sigma^{n-1},\gamma)$ embeds isometrically into $\mathbb{R}^{1,n}$ as the boundary of an asymptotically flat spacelike hypersurface with unit inward normal $\nu_0$. Let $(\mathcal{N}_0,\vec{H}_0,\beta_0)$ denote the induced Bartnik data. Assume further that $(\Omega^n, g, k)$ is a compact initial data set with outward normal vector $\nu$ satisfying the following conditions.
\begin{enumerate}[label=(\roman*), topsep=3pt, itemsep=0ex]
\item $\Omega^n$ has a spin structure extending the one induced on $\Sigma^{n-1}$ from the spacelike hypersurface of $\mathbb{R}^{1,n}$.
\item $\mu\geq|J|$ throughout $\Omega$.
\item $(\partial\Omega^n,g|_{\partial\Omega^n})$ is equal to $(\Sigma^{n-1},\gamma)$, inducing a second Bartnik data set $(\mathcal{N},\vec{H},\beta)$.
\item There is a bundle isometry $F:\mathcal{N}\to\mathcal{N}_0$ with hyperbolic angle $f$ so that 
\begin{align}
\left\langle F( \vec{H}) - \vec{H}_0, \nu_0 \right\rangle_{\mathcal{N}_0} - \sqrt{ \left\langle F(\vec{H}) - \vec{H}_0, \tau_0 \right\rangle_{\mathcal{N}_0}^2 + \left|\beta^\Delta \right|_\gamma^2} \geq 0,
\end{align}
where $\beta^\Delta=\beta_0-\beta-df$ and $\{\nu_0,\tau_0\}$ form an $SO^+(1,1)$-framing for $\mathcal{N}_0$.
\end{enumerate}
Then the interior of $(\Omega^n, g, k)$ isometrically embeds into Minkowski spacetime. In particular, there is no $(\Omega^n,g,k)$ satisfying conditions $(i)$--$(iv)$ with $\mu(x)> |J|(x)$ at some point $x\in\Omega^n$.
\end{corollary}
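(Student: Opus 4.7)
The plan is to glue $(\Omega^n,g,k)$ to the exterior of the given Minkowski-embedded spacelike hypersurface, producing a DEC-creased initial data set to which we apply Theorems \ref{theorem-main:PMT} and \ref{thm:rigidity}. Let $\widetilde M^n\subset\mathbb{R}^{1,n}$ denote the asymptotically flat spacelike hypersurface of the hypothesis, with $\partial\widetilde M^n=\Sigma^{n-1}$ and inward unit normal $\nu_0$ inducing Bartnik data $(\mathcal{N}_0,\vec H_0,\beta_0)$. Set $M^n_-:=\Omega^n$ with data $(g,k)$ and outward normal $\nu$, and $M^n_+:=\widetilde M^n$ with its induced data and inward normal $\nu_0$. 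Conditions (iii) and (iv) of the corollary exactly match conditions (i) and (ii) of Definition \ref{def:DECcreased}, while condition (i) ensures that the spin structures on $\Omega^n$ and on $\widetilde M^n$ agree along $\Sigma^{n-1}$ and therefore glue to a spin structure on $M^n:=M^n_-\cup_{\Sigma^{n-1}}M^n_+$. Combined with hypothesis (ii), this realizes $(M^n_\pm,g_\pm,k_\pm)$ as a spin asymptotically flat DEC-creased initial data set obeying the DEC on both sides.

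Because $M^n_+$ is a spacelike hypersurface with boundary in flat $\mathbb{R}^{1,n}$, its unique end has $E=0$ and $P=0$, its matter densities $\mu_+,J_+$ vanish identically, and its metric and extrinsic curvature can be taken to decay to any desired order at infinity, so the hypotheses of Theorem \ref{thm:rigidity} are in force. The theorem then produces a Lipschitz embedding $\iota:M^n\to\mathbb{R}^{1,n}$, smooth away from $\Sigma^{n-1}$, whose induced metric and second fundamental form agree with $g_-\cup g_+$ and $k_-\cup k_+$. Restricting $\iota$ to $\interior(\Omega^n)=\Omega^n\setminus\Sigma^{n-1}$ yields the claimed smooth isometric embedding of the interior of $(\Omega^n,g,k)$ into Minkowski spacetime.

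For the nonexistence assertion, suppose for contradiction that $\mu(x_0)>|J|(x_0)$ for some $x_0\in\Omega^n$; by continuity of $(g,k)$, the strict inequality persists on an open neighborhood meeting $\interior(\Omega^n)$. However, $\iota(\interior(\Omega^n))$ is a smooth spacelike hypersurface of flat Minkowski space, so the Gauss--Codazzi constraints force $\mu\equiv 0\equiv J$ throughout $\interior(\Omega^n)$, contradicting the above. The only nontrivial work in this argument is the matching of conventions---unit normals, $SO^+(1,1)$-framings, and the bundle isometry $F$---between Definition \ref{def:DECcreased} and the corollary's hypotheses; this is direct bookkeeping, and the genuine analytic content is entirely absorbed into Theorems \ref{theorem-main:PMT} and \ref{thm:rigidity}.
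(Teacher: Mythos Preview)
Your argument is correct and is exactly the approach the paper intends: the corollary is stated in the introduction as a direct application of Theorems~\ref{theorem-main:PMT} and~\ref{thm:rigidity}, and the paper gives no separate proof. The gluing, the verification that conditions $(i)$--$(iv)$ reproduce Definition~\ref{def:DECcreased}, and the final Gauss--Codazzi contradiction are all as they should be.

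One point is worth making more explicit. You assert that the metric and extrinsic curvature of $M^n_+$ ``can be taken to decay to any desired order at infinity'' so that the extra hypotheses of Theorem~\ref{thm:rigidity} (namely $g_+-\delta\in C^{3,\alpha}_{-q}$, $k_+\in C^{2,\alpha}_{-q-1}$ with $q>\max(1/2,n-3)$) are met. This is true, but the hypersurface is given, not chosen, and for $n\ge5$ the condition $q>n-3$ is strictly stronger than ordinary asymptotic flatness. The clean fix is the one your phrasing hints at: since $\Sigma^{n-1}$ is compact and $\widetilde M^n$ sits inside Minkowski space, one may smoothly deform $\widetilde M^n$ outside a large compact set so that it coincides with a constant-time slice $\{t=c\}$ there, remaining spacelike throughout. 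This leaves the Bartnik data along $\Sigma^{n-1}$ untouched while making $g_+-\delta$ and $k_+$ compactly supported, so every decay hypothesis of Theorem~\ref{thm:rigidity} is trivially satisfied and $E=|P|=0$ is immediate.
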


Corollary \ref{cor:fillin} assumes that $(\mathcal{N}_0,\vec{H}_0,\beta_0)$ and $(\Sigma^{n-1},\gamma)$ arise as the boundary of an asymptotically flat spacelike hypersurface in Minkowski spacetime. For example, this condition is satisfied when the spacelike hypersurface is the graph of a function over a constant time slice of $\mathbb{R}^{1,n}$. We also remark that Corollary \ref{cor:fillin} may be viewed as a generalization of Miao's result \cite{Miao}*{Corollary 1.1} concerning fill-ins with nonnegative scalar curvature.


\vspace{12pt}

\noindent{\bf{Relation to quasi-local mass.}} In analogy with the work of Shi-Tam \cite{ShiTam}, we expect that Theorem \ref{theorem-main:PMT} may be a useful tool in studying the positivity properties of the Wang-Yau \cite{WangYaumass} quasi-local mass, though we do not pursue this idea in the present work. There is, however, a more immediate implication to the Bartnik mass of compact initial data sets $(\Omega^n,g,k)$. In Bartnik's approach, one takes the mass of $\Omega^n$ as the infemum of the ADM masses of asymptotically flat extensions $(M^n,g,k)$ of $(\Omega^n,g,k)$ satisfying some admissibility conditions motivated by the positive mass theorem, including the DEC and a `no horizons' provision. Theorem \ref{theorem-main:PMT} suggests that the class of admissible extensions may be widened to include initial data sets with DEC-creases along $\Sigma^{n-1}=\partial\Omega^n$.

\vspace{12pt}

\noindent{\bf{Organization.}} In Section 2, we introduce notation and basic facts related to initial data sets, Bartnik data, and the associated spinor bundles. Section 3 contains the main geometric computation of the boundary terms encountered when integrating the Lichnerowicz-Schr{\"o}dinger-Weitzenb{\"o}ck formula by parts, where the DEC-creased condition appears. The existence of weak solutions to \eqref{eq:mainPDE} is established in Section 4, and concludes with the proof of Theorem \ref{theorem-main:existence}. This proof relies on a regularity result which is postponed to Section 5, where we verify that \eqref{eq:mainPDE} can be interpreted as a elliptic boundary value problem in the formalism of B{\"a}r-Ballman \cite{BaerBallmann}. Finally, Section 6 carries out the proof of the rigidity result Theorem \ref{thm:rigidity}.

\vspace{12pt}

\noindent{\bf{Aknowledgements.}} The authors would like to thank Hubert Bray for many influential discussions and suggestions. We would also like to thank Sven Hirsch and Yiyue Zhang for explaining their work \cite{HirschZhang}.


\section{Bartnik Data and Spinor Bundles}\label{sec:background}

\subsection{Bartnik data}
Throughout this section, we adhere to the context of Theorem \ref{theorem-main:PMT}. In particular, suppose $(M_+^n,g_+, k_+)$ and $(M_-^n,g_-,k_+)$ are $n$-dimensional initial data sets, with $M_+^n$ asymptotically flat and $M_-^n$ compact, and that the boundaries $\partial M_+^n$, $\partial M_-^n$ are equal to a common closed manifold $(\Sigma^{n-1},\gamma)$. 
Let $(\mathcal{N}_\pm, \vec{H}_\pm, \beta_\pm)$ be the Bartnik data on $(\Sigma^{n-1},\gamma)$ arising from $(M_\pm^n, g_\pm, k_\pm)$ using unit normals $\nu_+$ pointing into $M_+^n$ and $\nu_-$ pointing out of $M_-^n$. In particular, $\mathcal{N}_\pm$ has the $SO^+(1,1)$-frame $\{\tau_\pm,\nu_\pm\}$ and
\begin{align}
\begin{split}
\vec{H}_\pm=H_\pm\nu_\pm-(\mathrm{Tr}_\gamma k_\pm) \tau_\pm,\quad\quad\quad \beta_\pm(X)=-\langle\nabla^{\mathcal{N}_\pm}_X\nu_\pm,\tau_\pm\rangle=k_\pm(\nu_\pm,X),\qquad X\in T\Sigma^{n-1},    
\end{split}
\end{align}
where $H_\pm$ denotes the mean curvature of $\Sigma^{n-1}$ as a submanifold of $M_\pm^n$ computed using $\nu_\pm$. 

Using the frames $\{\nu_\pm,\tau_\pm\}$, any bundle isometry $F:\mathcal{N}_-\to\mathcal{N}_+$ can be uniquely described using its hyperbolic angle $f$, viewed as a function on $\Sigma^{n-1}$ and given by
\begin{align}\label{e:tauNuId1}
    F(\nu_-) = \cosh(f)\nu_+ - \sinh(f) \tau_+,\quad\quad\quad 
    F(\tau_-) = - \sinh(f)\nu_+ + \cosh(f) \tau_+.
\end{align}
Given such an $F$, the connection coefficients of $\nabla^{\mathcal{N}_-}$ can be expressed in the frame $\{F^{-1}(\nu_+),F^{-1}(\tau_+)\}$ in the following way
\begin{align}\label{e:oneFormRelation}
\begin{split}
\left\langle \nabla^{\mathcal{N}_-}_X F^{-1}(\nu_+), F^{-1}(\tau_+) \right\rangle
&= \left\langle \nabla^{\mathcal{N}_-}_X \left(\cosh(f) \nu_- + \sinh(f)\tau_-\right),  \sinh(f) \nu_- + \cosh(f)\tau_- \right\rangle \\
&=-\beta_-(X)-df(X).
\end{split}
\end{align}
The difference of connections is frame independent and becomes
\begin{align}
\beta^\Delta (X) := \left\langle \nabla^{\mathcal{N}_-}_XF^{-1}(\nu_+),F^{-1}(\tau_+)\right\rangle - \left\langle \nabla^{\mathcal{N}_+}_X\nu_+,\ \tau_+\right\rangle 
&= \beta_+(X)-\beta_-(X) - df.\label{eq:oneFormDiff}
\end{align}
For convenience, we record the components of $\vec{H}_\pm$ in the $\{\tau_+, \nu_+\}$ frame
\begin{align}
\begin{split}\label{e:HRelation}
\left\langle \vec{H}_+, \nu_+ \right\rangle = H_+, \quad& \quad \left\langle \vec{H}_-, F^{-1}(\nu_+) \right\rangle  = \cosh(f) H_- + \sinh(f) \Tr_{\gamma} k_-, \\
\left\langle \vec{H}_+, \tau_+ \right\rangle = \Tr_{\gamma} k_+,  &\quad \left\langle  \vec{H}_-, F^{-1}(\tau_+) \right\rangle = \sinh(f) H_- + \cosh(f) \Tr_{\gamma} k_-.
\end{split}
\end{align}

\subsection{Spin structures}
Let $P^{SO}_\pm$ denote be the bundle of oriented orthonormal frames over $(M_\pm^n, g_\pm)$. Define a map $\phi_0:TM^n_-|_{\Sigma^{n-1}}\to TM_+^n|_{\Sigma^{n-1}}$ sending $\nu_-$ to $\nu_+$ and acting by the identity on $T{\Sigma^{n-1}}$.
We also regard $\phi_0$ as a map  $P^{SO}_-|_{\Sigma^{n-1}} \to P^{SO}_+|_{\Sigma^{n-1}}$. 
Since we assume that the smooth manifold $M^n=M_-^n\cup_{\Sigma^{n-1}} M_+^n$ is spin, there are 
$\mathrm{Spin}(n)$-reductions $P_-^{\mathrm{Spin}} \to P^{SO}_-$ and $P_+^{\mathrm{Spin}} \to P_+^{SO}$ which agree over $\Sigma^{n-1}$ in the sense that there is a bundle map $\Phi_0$ making the following diagram commute
\begin{equation}\label{PhiDef}
    \begin{tikzcd}
   P_-^{\mathrm{Spin}}|_{\Sigma^{n-1}}  \arrow[r, "\Phi_0"] \arrow[d] &  P_+^{\mathrm{Spin}}|_{\Sigma^{n-1}} \arrow[d] \\
  P^{SO}_-|_{\Sigma^{n-1}} \arrow[r, "\phi_0" ] & P^{SO}_+|_{\Sigma^{n-1}}
\end{tikzcd}.
\end{equation}

Making use of the spin representation, we obtain associated spinor bundles $S^\pm_0$ over $M_\pm^n$, equipped with Clifford multiplication, a hermitian metric $\left\langle \cdot, \cdot \right\rangle_0$, and a compatible spin connection $\nabla_0$. Interpreting $P^{\mathrm{Spin}}_\pm$ as the bundle of orthonormal frames of $S^\pm_0$, $\Phi_0$ induces an isometry $ S^-_0|_{\Sigma^{n-1}} \to  S^+_0|_{\Sigma^{n-1}}$ which we continue to denote by $\Phi_0$. By \eqref{PhiDef}, the map $\Phi_0$ respects Clifford multiplication in the sense that
$\Phi_0(v \psi) = \phi_0(v) \Phi_0(\psi)$ for all $v \in TM_-^n|_{\Sigma^{n-1}}$ and spinors $\psi$. In particular, $\Phi_0(\nu_- \psi) = \nu_+ \Phi_0(\psi)$ and $\Phi_0(X \psi) = X \Phi_0(\psi)$ for $X \in T\Sigma^{n-1}$.

Following \cite{Lee}*{Section 8.3}, we construct the {\emph{spacetime spinor bundle} $S_\pm:=S^\pm_0\oplus S^\pm_0$. Recall that the bundles $\mathcal{N}_\pm$ are defined as the Whitney sum of the normal bundle from the embedding $\Sigma^{n-1}\subset M_\pm^n$ with a trivial bundle spanned by $\tau_\pm$. Extend the trivial bundle spanned by $\tau_\pm$ over $M_\pm^n$, and equip the sum $\mathrm{span}(\tau_\pm)\oplus TM_\pm^n$ with a Lorentzian bundle metric $h_\pm$ defined by declaring $\tau_\pm$ to be orthogonal to $TM_\pm^n$ and have square norm $-1$. The associated Clifford bundle $\mathrm{Cl}(\mathrm{span}(\tau_\pm)\oplus TM_\pm^n)$ acts on sections $\psi=\psi_1\oplus\psi_2$ of $S_\pm$ as follows:
\begin{align}
X\psi=X\psi_1\oplus -X\psi_2\quad \text{ for } X\in TM_\pm^n,\quad \quad\quad \quad \tau_\pm\psi=\psi_2\oplus\psi_1.
\end{align}
In our convention, Clifford multiplication satisfies $VW+WV=-2h_\pm(V,W)$. Note that $\tau_\pm$ satisfies $\tau_\pm^2 = 1$, and anticommutes with Clifford multiplication by elements of $TM_\pm^n$.

The spinor bundles $S_\pm$ carry two inner products, namely $\langle\cdot,\cdot\rangle$ and $(\cdot,\cdot)$. The former is positive definite and satisfies
\begin{equation}\label{eq:<>inv}
\langle X\psi,X\psi'\rangle=|X|^2\langle\psi,\psi'\rangle\quad \text{ for } X\in TM_\pm^n\qquad\qquad
    \langle \tau_\pm\psi,\tau_\pm\psi'\rangle=\langle\psi,\psi'\rangle.
\end{equation}
The pairing $(\cdot,\cdot)$ 
satisfies the first equality of \eqref{eq:<>inv} and the second equality with a minus sign on the right-hand side. Along $\Sigma^{n-1}$, the bundle map $\Phi_0$ induces an bundle isometry $\Phi:S_-|_{\Sigma^{n-1}}\to S_+|_{\Sigma^{n-1}}$ with respect to the pairing $\langle\cdot,\cdot\rangle$. In what follows, it is useful to introduce local framings for $S_\pm$. Given a local orthonormal frame $\{e_i\}_{i=1}^n$ for $(M_\pm^n,g_\pm)$, the spin structure on $M_\pm^n$ allows one to lift $\{e_i\}_{i=1}^n$ to a local orthonormal frame of $S_0^\pm$. Duplicating this frame gives rise to a framing $\{\psi_l\}_{l=1}^I$ of $S_\pm$ where $I=\mathrm{dim}(S_\pm)$. In this case, we we refer to $\{\psi_l\}_{l=1}^I$ as {\emph{induced by $\{e_i\}_{i=1}^n$}}.

\vspace{12pt}

\noindent{\bf{Connections and Dirac operators.}} There are several natural connections on the spinor bundles $S_\pm$ and $S_\pm|_{\Sigma^{n-1}}$ which we will now identify and describe:
\begin{itemize}
    \item the {\emph{$M_\pm^n$ Riemannian connections}} $\nabla^\pm$ on $S_\pm$ from the spin connections induced by the Levi-Civita connection of $g_\pm$ on $S_0^\pm$,
    \item the {\emph{Sen (or spacetime) connections}} $\overline{\nabla}^\pm$ on $S_\pm$ defined in a local frame by
    \begin{equation}
        \overline{\nabla}^\pm_i\psi=\nabla^{\pm}_i\psi+\frac12 (k_\pm)^j_ie_j\tau_\pm\psi,
    \end{equation}
    \item the {\emph{boundary spin connection}} $\nabla^{\Sigma,\pm}$ on $S_\pm|_{\Sigma^{n-1}}$ induced by the Levi-Civita connection of $\gamma$.
\end{itemize}
It will be convenient to have a local formula for $\nabla^{\Sigma,\pm}$. Fix a local frame $\{\psi^\pm_l\}_{l=1}^I$ of $S_{\pm}|_{\Sigma^{n-1}}$ induced by a frame $\{e_1, \dots, e_{n-1}, \nu_\pm \}$, where $\{e_\alpha\}_{\alpha=1}^{n-1}$ is an orthonormal frame for ${\Sigma^{n-1}}$. If $\psi = \sum_{l=1}^I c^\pm_l \psi^\pm_l$ is a section of $S_{\pm}|_{\Sigma^{n-1}}$, then we have
\begin{align}\label{eq:BoundaryConnection}
    \nabla^{\Sigma,\pm}_{X} \psi = \sum_{l=1}^I X(c^\pm_l) \psi^\pm_l - \frac14 \sum_{l=1}^I\sum_{\alpha,\eta=1}^{n-1}c^\pm_l\omega_{\alpha\eta}(X)e_\alpha e_\eta \psi^\pm_l
\end{align}
for any $X \in T\Sigma^{n-1}$, where $\omega_{\alpha\eta}(X) = \left\langle e_\alpha, \nabla_X e_\eta\right\rangle$. 
The following result summarizes the relevant properties used in subsequent computations.

\begin{proposition}\label{prop:Phiproperties}
The connections $\nabla^\pm$, $\overline{\nabla}^\pm$, $\nabla^{\Sigma,\pm}$, the pairings $\langle\cdot,\cdot\rangle$, $(\cdot,\cdot)$, the bundle isometry $\Phi$, and the endormophisms $\tau_\pm$ defined above satisfy the following properties.
\begin{enumerate}[label=(\roman*), topsep=3pt, itemsep=0ex]
\item $\nabla^\pm$ is compatible with $\langle\cdot,\cdot\rangle$ and $\overline{\nabla}^\pm$ is compatible with $(\cdot,\cdot)$.
\item For any $\psi\in S_-|_{\Sigma^{n-1}}$, we have
\begin{align}
\begin{split}
\Phi(X \psi) &= X\Phi(\psi) \quad\quad\textrm{ for }X \in T\Sigma^{n-1}, \\ 
\Phi(\nu_- \psi) &= \nu_+ \Phi(\psi),\quad    \Phi(\tau_- \psi) = \tau_+ \Phi(\psi) .
\end{split}
\end{align}
\item $\Phi$ and $\tau_\pm$ are parallel with respect to $\nabla^{\Sigma,\pm}$ and $\nabla^\pm$, that is $\nabla^{\Sigma,+} \circ \Phi = \Phi \circ \nabla^{\Sigma,-}$ and $\nabla^\pm\circ\tau_\pm=\tau_\pm\circ\nabla^\pm.$
\end{enumerate}
\end{proposition}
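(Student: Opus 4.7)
The plan is to verify each of the three claims by unwinding the definitions of the connections, pairings, and the bundle map $\Phi$, keeping in mind that $S_\pm = S_0^\pm \oplus S_0^\pm$ and that the various structures either act block-diagonally or swap the two summands.

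For part (i), I would first note that the spin connection induced by the Levi-Civita connection of $g_\pm$ is metric with respect to the standard positive-definite Hermitian pairing on $S_0^\pm$; this passes block-diagonally to $(S_\pm,\langle\cdot,\cdot\rangle)$, giving compatibility of $\nabla^\pm$ with $\langle\cdot,\cdot\rangle$. For $\overline{\nabla}^\pm$ and $(\cdot,\cdot)$, I would argue in two steps: first, that $\nabla^\pm$ is also $(\cdot,\cdot)$-compatible (again block-diagonally), and second, that the correction endomorphism $A_i := \tfrac12 (k_\pm)_i^j e_j \tau_\pm$ is $(\cdot,\cdot)$-skew. The latter follows from the hypotheses: polarizing the identity $(X\psi, X\psi')=|X|^2(\psi,\psi')$ yields that Clifford multiplication by a tangent vector is $(\cdot,\cdot)$-skew, while $(\tau_\pm\psi,\tau_\pm\psi')=-(\psi,\psi')$ similarly yields that $\tau_\pm$ is $(\cdot,\cdot)$-skew; using $e_j\tau_\pm=-\tau_\pm e_j$, one checks the product is again skew. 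Adding a skew correction to a compatible connection preserves compatibility.

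Part (ii) should be a direct consequence of the construction of $\Phi$ as $\Phi_0 \oplus \Phi_0$. The commutativity of the diagram \eqref{PhiDef} tells us that $\Phi_0$ intertwines Clifford multiplication by $T\Sigma^{n-1}$ and exchanges $\nu_- \leftrightarrow \nu_+$. Because Clifford multiplication on $S_\pm$ is defined by $X(\psi_1\oplus\psi_2) = X\psi_1 \oplus -X\psi_2$, the block-diagonal map $\Phi_0\oplus\Phi_0$ inherits the intertwining property on each summand. Compatibility with $\tau_\pm$ is automatic since $\tau_\pm$ acts as the swap $\psi_1\oplus\psi_2\mapsto\psi_2\oplus\psi_1$, which clearly commutes with any block-diagonal operator.

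For part (iii), I would treat the two parallelism statements separately. That $\tau_\pm$ is parallel for $\nabla^\pm$ follows because $\nabla^\pm$ acts componentwise on $S_0^\pm\oplus S_0^\pm$ while $\tau_\pm$ is the constant swap, so the two commute. For $\Phi$ being parallel for the boundary spin connection, I would use the local formula \eqref{eq:BoundaryConnection} in compatible frames $\{e_1,\ldots,e_{n-1},\nu_\pm\}$ related via $\phi_0$. Because $\phi_0$ is the identity on $T\Sigma^{n-1}$ and the coefficients $\omega_{\alpha\eta}$ are determined by the Levi-Civita connection of the common intrinsic metric $\gamma$, they agree on both sides; applying $\Phi$ to \eqref{eq:BoundaryConnection}, its intertwining property with tangential Clifford multiplication from part (ii) yields $\nabla^{\Sigma,+}\circ\Phi=\Phi\circ\nabla^{\Sigma,-}$. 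The step requiring the most care is the sign check in part (i) that $\tfrac12 k_i^j e_j\tau_\pm$ is genuinely $(\cdot,\cdot)$-skew, which hinges on the precise conventions in \eqref{eq:<>inv} and the Clifford action on $S_\pm$; once that is correct, everything else reduces to block-diagonal bookkeeping.
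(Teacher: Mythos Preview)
Your proposal is correct and follows essentially the same approach as the paper. The paper dispatches item $(i)$ and the $\tau_\pm$-parallelism part of $(iii)$ by citing \cite{BartnikChrusciel} as standard, whereas you spell out the block-diagonal and skewness verifications explicitly; for item $(ii)$ and the $\Phi$-parallelism part of $(iii)$, both you and the paper use the diagram \eqref{PhiDef} and the local formula \eqref{eq:BoundaryConnection} in compatible frames $\{e_1,\dots,e_{n-1},\nu_\pm\}$ in the same way.
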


\begin{proof}
Item $(i)$ and the statement $\nabla^\pm\circ\tau_\pm=\tau_\pm\circ\nabla^\pm$ of item $(iii)$ are standard, see for instance \cite{BartnikChrusciel}. Item $(ii)$ follows from the fact that $\phi_0$ acts by the identity on $T\Sigma^{n-1}$, and rotates the frame $\{\nu_-,\tau_-\}$ into $\{\nu_+,\tau_+\}$. 

To prove the statement $\nabla^{\Sigma,+} \circ \Phi = \Phi \circ \nabla^{\Sigma,-}$ of item $(iii)$, let $\{\psi^\pm_l\}_{l=1}^I$ be spin frames induced by an orthonormal frame $\{e_1,\dots,e_{n-1},\nu_\pm\}$ for $TM_\pm^n|_{\Sigma^{n-1}}$. The diagram \eqref{PhiDef} implies that $\{\psi^+_l\}_{l=1}^I$ can be chosen as $\{\Phi(\psi^-_l)\}_{l=1}^I$, so that if $\psi=\sum_{l=1}^I c^-_l\psi^-_l$ then $\Phi(\psi)=\sum_{l=1}^I c^-_l\psi^+_l$. Hence we have
\begin{align}
\begin{split}
\Phi \left(\nabla^{\Sigma,-}_{X} \psi \right)&= \Phi \left( \sum_{l=1}^I X(c_l^-) \psi_l^- - \frac14 \sum_{l=1}^I\sum_{\alpha,\eta=1}^{n-1}c_l^-\omega_{\alpha\eta}(X)e_\alpha e_\eta\psi_l^- \right) \\
&=   \sum_{l=1}^I X(c_l^-) \Phi (\psi_l^-) - \frac14 \sum_{l=1}^I\sum_{\alpha,\eta=1}^{n-1}c_l^-\omega_{\alpha\eta}(X)e_\alpha e_\eta  \Phi(\psi_l^-)\\
&=  \nabla^{\Sigma,+}_X \Phi(\psi).
\end{split}
\end{align}
\end{proof}

The various connections give rise to certain Dirac operators. Fix local orthonormal frame $\{e_i\}_{i=1}^n$ for $(M^n_{\pm},g_{\pm})$, with $e_n=\nu_\pm$ on $\Sigma^{n-1}$. Roman indicies will run over $i=1,\dots, n$ and Greek indices will run over $\alpha=1,\dots, n-1$. The {\emph{Dirac-Witten operator}} is given by
\begin{equation}
D_W=e^i\overline{\nabla}_i ,
\end{equation}
and the {\emph{boundary Dirac operator}} is given by
\begin{equation}
\mathcal{D}_\pm=\mp\nu_\pm e^\alpha\nabla^{\Sigma,\pm}_\alpha,
\end{equation}
where the covectors $e^i$, $e^\alpha$ act by Clifford multiplication after lowering the index.
Note the above sign change, which adheres to the convention that $\mp\nu_\pm$ points out of $M_\pm^n$.


\subsection{The Lichnerowicz-Schr{\"o}dinger-Weitzenb\"ock formula.}
To properly introduce the fundamental geometric identity, fix a general smooth compact spin initial data set $(\Omega^n,g,k)$. We consider a section $\psi$ of the spacetime spinor bundle with timelike endomorphism $\tau$, the Sen connection $\overline{\nabla}$, Dirac-Witten operator $D_W$, outward normal $\nu$, boundary mean curvature $H$, and boundary Dirac operator $\mathcal{D}^{\partial\Omega}$. By integrating the Lichnerowicz-Schr{\"o}dinger-Weitzenb\"ock formula by parts for a spinor $\psi$, one may obtain \cite{BartnikChrusciel}*{(11.13)} the identity
\begin{align}
\begin{split}\label{eq:Lichnerowicz}
&\quad \int_{\Omega^n} \left(|\overline\nabla \psi|^2 - |D_W \psi|^2+\frac12\left\langle \psi,\ (\mu+J\tau)\psi \right\rangle \right)dV \\
{}&=\int_{\partial\Omega^n} \left\langle\psi,\mathcal{D}^{\partial\Omega}\psi-\frac12H\psi-\frac12\left[\left(\Tr_{\partial\Omega}k\right)\nu-k(\nu,e_\alpha)e^\alpha \right] \tau \psi \right\rangle dA
\end{split}
\end{align}
where $\{e_\alpha\}_{\alpha=1}^{n-1}$ is an orthonormal frame for $\partial \Omega^n$.



\vspace{12pt}

\noindent{\bf{Constant spinors at infinity and mass.}}
Here and throughout, let $q_* = \frac{n-2}{2}$. For an $n$-dimensional asymptotically flat spin initial data set $(M^n,g,k)$ with spacetime spinor bundle $S$, define a weighted Sobolev space $W^{1,2}_{-q_*}(S)$ as the completion of $C_c^\infty(S)$ with respect to the norm
\begin{align}
    \|\psi\|_{W^{1,2}_{-q_*}(S)}^2 := \|\nabla \psi \|_{L^2(S)}^2 + \|\psi /r\|_{L^2(S)}^2,
\end{align}
where $r$ is a positive extension to all $M^n$ of the radial coordinate $r = |x|$ in the asymptotically flat end.


Fix an orthonormal frame $\{e_i\}_{i=1}^n$ for $(M^n,g)$ over an asymptotically flat end $M^n_\ell$ which is asymptotic to the coordinate vector fields $\{\partial_i\}_{i=1}^n$ in the sense that $|e_i-\partial_i|=O(r^{-q_*})$ for all $i$. Let $\{\psi_l\}_{l=1}^I$ denote an associated frame for the spacetime spinor bundle over $M^n_\ell$. A spacetime spinor \textit{$\psi_\infty$} is called a \textit{constant spinor at infinity} if $\psi_\infty = \sum_{l=1}^I c_l \psi_l$ for constant functions $c_l$. We say a spacetime spinor $\psi$ is {\emph{asymptotic}} to such a constant spinor at infinity if $\psi - \psi_\infty \in W^{1,2}_{-q_*}(S|_{M^n_\ell})$. In this case, we use the notation $\psi\to\psi_\infty$ on $M^n_\ell$.
The ADM energy and momenta can be computed in terms of asymptotically constant spacetime spinors. In particular, Witten \cite{Witten} observed the following.

\begin{proposition}[\cite{BeigChrusciel}]\label{prop:boundarylimit}
Suppose $(M^n,g,k)$ is a spin asymptotically flat initial data set with a spacetime spinor $\psi$ asymptotic to a constant spinor $\psi_\infty$. Then
\begin{align}
\begin{split}
& \lim_{r \to \infty} \int_{S_r} \left\langle\psi_\infty,\ \mathcal{D}^{S_r}\psi_\infty-\frac12H\psi_\infty-\frac12\left[\left(\Tr_{S_r}k\right)\upsilon-k(\upsilon,\cdot) \right] \tau \psi_\infty \right\rangle dA \\
&= \frac{(n-1)\omega_{n-1}}{2}  \left(E|\psi_\infty|^2-\left\langle\psi_\infty,P_ie^i \tau\psi_\infty\right\rangle\right),
\end{split}
\end{align}
where $\mathcal{D}^{S_r}$ and $H$ denote the boundary Dirac operator and mean curvature of the coordinate sphere $S_r$ with respect to the outward unit normal $\upsilon$.
\end{proposition}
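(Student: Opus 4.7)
The plan is to reduce the boundary integral to the standard ADM surface integrals by expanding the integrand in an asymptotically Euclidean chart and exploiting the constancy of $\psi_\infty$ in the induced spinor frame. Concretely, I would work in coordinates $x^i$ on $M^n_\ell$ where $g_{ij} = \delta_{ij} + O(r^{-q})$ and $k_{ij} = O(r^{-q-1})$, choose an orthonormal frame $\{e_i\}$ with $e_i - \partial_i = O(r^{-q})$, and take $\{\psi_l\}$ to be the spinor frame induced by $\{e_i\}$, so that $\psi_\infty = \sum_l c_l \psi_l$ has constant coefficients $c_l$. The computation then splits into a purely Riemannian piece (the $\mathcal{D}^{S_r}\psi_\infty - \tfrac{1}{2}H\psi_\infty$ terms, giving $E$) and a $k$-linear piece (giving $P_i$).

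For the Riemannian piece, since $X(c_l)=0$ for every $X$ tangent to $S_r$, the local formula \eqref{eq:BoundaryConnection} collapses to $\nabla^{\Sigma,+}_\alpha\psi_\infty = -\tfrac14 \omega_{ij}(e_\alpha)\,e^i e^j\psi_\infty$. In the asymptotic chart the connection coefficients satisfy $\omega_{ij}(e_k) = \tfrac12(g_{ki,j} - g_{kj,i}) + O(r^{-2q-1})$. Substituting this into $\mathcal{D}^{S_r} = -\upsilon\,e^\alpha \nabla^{\Sigma,+}_\alpha$, expanding the Clifford products, and using Hermitian symmetry of $\langle\cdot,\cdot\rangle$, the symmetric parts of $\omega$ pair with $\tfrac12 H$ and cancel, leaving
\begin{equation*}
\left\langle \psi_\infty, \mathcal{D}^{S_r}\psi_\infty - \tfrac{1}{2} H\psi_\infty\right\rangle = \frac{|\psi_\infty|^2}{4}\sum_{i,j}(g_{ij,i} - g_{ii,j})\upsilon^j + O(r^{-2q-1}).
\end{equation*}
Integrating over $S_r$ and using $2q+1 > n-1$ to discard the error term, the limit produces $\frac{(n-1)\omega_{n-1}}{2}\,E\,|\psi_\infty|^2$ by definition of the ADM energy.

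For the momentum piece, since $k = O(r^{-q-1})$ and $\psi_\infty$ is approximately constant in the orthonormal frame, I can evaluate the pairing $\langle\psi_\infty,(\cdot)\tau\psi_\infty\rangle$ to leading order. A direct algebraic rearrangement gives $(\Tr_{S_r}k)\upsilon - k(\upsilon, e_\alpha)e^\alpha = -\sum_{i,j}(k_{ij} - (\Tr_g k)\delta_{ij})\upsilon^j e_i$ (obtained by adding and subtracting the normal components $k(\upsilon,\upsilon)\upsilon$). The $k$-linear boundary integrand then reduces to
\begin{equation*}
\frac{1}{2}\sum_{i,j}(k_{ij} - (\Tr_g k)\delta_{ij})\upsilon^j\,\left\langle\psi_\infty, e^i\tau\psi_\infty\right\rangle + o(r^{-(n-1)}),
\end{equation*}
and the pairing $\langle\psi_\infty,e^i\tau\psi_\infty\rangle$ is constant to leading order. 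Integrating against the definition of $P_i$ produces the momentum term, with the overall sign $-\frac{(n-1)\omega_{n-1}}{2}\,\langle\psi_\infty, P_i e^i\tau\psi_\infty\rangle$ determined by the convention $\tau X = -X\tau$ for $X \in TM^n$ and the Hermitian properties recorded in \eqref{eq:<>inv}.

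The main technical hurdle is the algebraic cancellation in the Riemannian step: verifying that the diagonal Clifford terms $e^i e^i = -1$ paired with the symmetric part of $\omega$ precisely cancel the mean curvature expansion $H = -\tfrac{1}{2}\sum_{i,j}(g_{ij,i}-g_{ii,j})\upsilon^j + \ldots$ at leading order, so that only the antisymmetric combination survives. This is the classical Witten manipulation; the remainder of the argument is a routine bookkeeping of asymptotic decay rates, uniformly controlled by the hypothesis $q > q_* = (n-2)/2$, which ensures all error terms decay faster than $r^{-(n-1)}$ and hence vanish upon integration over $S_r$.
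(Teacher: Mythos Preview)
The paper does not supply its own proof of this proposition; it is simply quoted from \cite{BeigChrusciel} (the bracket in the proposition header is the citation), so there is nothing to compare against directly. Your outline is the classical Witten computation and is the right strategy.

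One point of imprecision: your description of the Riemannian cancellation is not quite the mechanism at work. It is cleaner to first invoke the spinorial Gauss formula to rewrite $\mathcal{D}^{S_r}\psi_\infty-\tfrac12 H\psi_\infty$ in terms of the \emph{ambient} spin connection, obtaining an expression of the form $\upsilon\, e^i\nabla_i\psi_\infty+\nabla_\upsilon\psi_\infty$ with $\nabla$ the Levi--Civita spin connection of $g$. Then $\nabla_i\psi_\infty=-\tfrac14\omega_{jk}(e_i)e^je^k\psi_\infty$ with $\omega_{jk}(e_i)=\tfrac12(g_{ij,k}-g_{ik,j})+O(r^{-2q-1})$, and the Clifford algebra reduces the \emph{real part} of the pairing to your displayed expression; the remaining terms involve $\langle\psi_\infty,e^ie^j\psi_\infty\rangle$ with $i\neq j$, which are purely imaginary by \eqref{eq:<>inv} and so drop out. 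Framing it as ``symmetric parts of $\omega$ cancel $\tfrac12 H$'' obscures this. Your treatment of the momentum term, including the identity $(\mathrm{Tr}_{S_r}k)\upsilon-k(\upsilon,e_\alpha)e^\alpha=-\pi_{ij}\upsilon^j e_i$, is correct.
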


\section{Spinor Calculations}\label{sec:spinorcalculation}

Suppose $(M_\pm^n,g_\pm,k_\pm)$ is an asymptotically flat DEC-creased initial data set. Let $f$ be the hyperbolic angle associated to the bundle isometry $F:\mathcal{N}_-\to\mathcal{N}_+$, set
\begin{align}
a=\cosh(f), \quad b=\sinh(f),\quad A=\cosh(f/2), \quad B=\sinh(f/2),
\end{align}
and let $\epsilon_\pm=\nu_\pm \tau_\pm$. Acting by Clifford multiplication, $\epsilon_\pm$ is an isometry of $(S_\pm|_{\Sigma^{n-1}},\langle\cdot,\cdot\rangle)$ due to \eqref{eq:<>inv}, and satisfies 
the following properties which are frequently used below.

\begin{proposition}\label{prop:epsilonidentities}
The endomorphism $\epsilon_\pm$ and functions $a,b,A,B$ defined above satisfy 
    \begin{align}\label{eq:epsilonproperties}
        \epsilon_\pm^2 = 1, \quad\quad \epsilon_\pm \nu_\pm = - \nu_\pm \epsilon_\pm = \tau_\pm, \quad\quad \epsilon_\pm \tau_\pm = - \tau_\pm \epsilon_\pm, \quad\quad \epsilon_\pm \circ \nabla^{\Sigma,\pm} = \nabla^{\Sigma,\pm} \circ \epsilon_\pm,
    \end{align}
and
    \begin{align}\label{eq:abABproperties}
        (A-B\epsilon_\pm)(A+B\epsilon_\pm)=1,\quad (A+B\epsilon_\pm)^2=a+b\epsilon_\pm.
    \end{align}
\end{proposition}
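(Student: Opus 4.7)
The proposition collects purely algebraic identities, so the plan is to verify each group in turn using only the Clifford relation $VW+WV=-2h_\pm(V,W)$, the local formula \eqref{eq:BoundaryConnection} for $\nabla^{\Sigma,\pm}$, and elementary hyperbolic identities. Since $\nu_\pm$ is spacelike of unit norm and $\tau_\pm$ is timelike of square norm $-1$ and orthogonal to $\nu_\pm$ and to $T\Sigma^{n-1}$, the Clifford relation yields $\nu_\pm^2=-1$, $\tau_\pm^2=1$, and the pairwise anticommutations $\nu_\pm\tau_\pm=-\tau_\pm\nu_\pm$ and $e_\alpha\nu_\pm=-\nu_\pm e_\alpha$, $e_\alpha\tau_\pm=-\tau_\pm e_\alpha$ for any tangent frame vector $e_\alpha$ to $\Sigma^{n-1}$.

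For the four Clifford identities in \eqref{eq:epsilonproperties} other than the parallel statement, the strategy is to compute directly: $\epsilon_\pm^2=\nu_\pm\tau_\pm\nu_\pm\tau_\pm$ collapses by anticommutation to $-\nu_\pm^2\tau_\pm^2=1$, and the remaining products $\epsilon_\pm\nu_\pm=\tau_\pm=-\nu_\pm\epsilon_\pm$ and $\epsilon_\pm\tau_\pm=\nu_\pm=-\tau_\pm\epsilon_\pm$ are handled the same way. For the parallel statement, I will work in a local frame $\{\psi_l\}_{l=1}^I$ of $S_\pm|_{\Sigma^{n-1}}$ induced by $\{e_1,\dots,e_{n-1},\nu_\pm\}$ and apply \eqref{eq:BoundaryConnection}. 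The key observation is that $\epsilon_\pm$ commutes with each tangent covector $e_\alpha$: since both $\nu_\pm$ and $\tau_\pm$ anticommute with $e_\alpha$, their product commutes with $e_\alpha$, and hence with every bivector $e_\alpha e_\eta$ appearing in the connection coefficient. Because $\epsilon_\pm$ also acts on the fixed frame $\{\psi_l\}$ by a constant-coefficient linear rearrangement, it passes through both the $X(c_l)$ derivative piece and the $\omega_{\alpha\eta}(X)e_\alpha e_\eta$ piece of \eqref{eq:BoundaryConnection}, yielding $\epsilon_\pm\nabla^{\Sigma,\pm}_X\psi=\nabla^{\Sigma,\pm}_X(\epsilon_\pm\psi)$.

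The identities in \eqref{eq:abABproperties} then follow from $\epsilon_\pm^2=1$ combined with the standard half-angle formulas $A^2-B^2=1$, $A^2+B^2=\cosh(f)=a$, and $2AB=\sinh(f)=b$. Expanding $(A-B\epsilon_\pm)(A+B\epsilon_\pm)=A^2-B^2\epsilon_\pm^2=A^2-B^2$ and $(A+B\epsilon_\pm)^2=(A^2+B^2)+2AB\,\epsilon_\pm$ yields the two stated equalities. None of these steps presents a genuine obstacle; the only point requiring any care is the parallel identity, where one must notice that $\epsilon_\pm$ commutes with the bivectors $e_\alpha e_\eta$ precisely because it anticommutes with each individual $e_\alpha$.
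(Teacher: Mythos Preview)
Your proof is correct and follows essentially the same approach as the paper: the Clifford identities are obtained directly from the Clifford relation, and \eqref{eq:abABproperties} from $\epsilon_\pm^2=1$ together with the half-angle formulas $A^2-B^2=1$, $A^2+B^2=a$, $2AB=b$. The only minor difference is that for the parallel identity the paper simply cites Proposition~\ref{prop:Phiproperties}(iii), whereas you argue directly from the local formula \eqref{eq:BoundaryConnection} by observing that $\epsilon_\pm$ commutes with each tangent bivector $e_\alpha e_\eta$; this is the same content made explicit.
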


\begin{proof}
    The identities \eqref{eq:epsilonproperties} follow from the Clifford relations and part $(iii)$ of Proposition \ref{prop:Phiproperties}. To establish \eqref{eq:abABproperties}, one applies the hyperbolic identity $a^2-b^2=A^2-B^2=1$ and the double angle formulas $A^2+B^2=a$, $2AB=b$.
\end{proof}

\begin{remark}\label{rem:boundarycondition}
    To explain the boundary condition $\Phi(\psi_-)=(A+B\epsilon_+)\psi_+$, suppose $(M_\pm^n,g_\pm,k_\pm)$ arises as a spacelike Lipschitz-embedded hypersurface $M^n$ in a spacetime $(N^{n+1},h)$. Hyperbolic rotation by $f$ on $TN^{n+1}|_{\Sigma^{n+1}}$ taking $\{\nu_-,\tau_-\}$ to $\{\nu_+,\tau_+\}$ induces the rotation $(A+B\epsilon_+)$ on $S_+|_{\Sigma^{n-1}}$, which can be seen using \eqref{eq:abABproperties}. With this in mind, the boundary condition requires that the identification $\Phi:S_-|_{\Sigma^{n-1}}\to S_{+}|_{\Sigma^{n-1}}$ takes $\psi_-$ to the the rotation of $\psi_+$ by $f$. This condition implies that the piecewise defined spinor $\psi_-\cup\psi_+$ is a continuous section of $N^{n+1}$'s spinor bundle restricted to $M^n$.
\end{remark}

\subsection{Boundary term computation}

Given spacetime spinors $\psi_\pm$, denote the boundary terms along $\Sigma^{n-1}$ in the Lichnerowicz-Schr{\"o}dinger-Weitzenb{\"o}ck formula \eqref{eq:Lichnerowicz} by
\begin{align}
\begin{split}
    \mathbb{I}_- &:= \int_{\Sigma^{n-1}} \left\langle\psi_- ,\ \mathcal{D}_- \psi_- - \frac12 H_- \psi_- - \frac12\left[\left(\Tr_\gamma k_- \right)\nu_- - k_-(\nu_-,\cdot) \right] \tau_- \psi_-\right \rangle dA, \\
\mathbb{I}_+ &:= \int_{\Sigma^{n-1}} \left\langle\psi_+,\ \mathcal{D}_+ \psi_+ + \frac12 H_+ \psi_+ +\frac12\left[\left(\Tr_\gamma k_+ \right)\nu_+ - k_+(\nu_+,\cdot) \right] \tau_+ \psi_+\right \rangle dA.
\end{split}
\end{align}
We will show that $\mathbb{I}_- + \mathbb{I}_+$ has a favorable sign under appropriate boundary conditions for $\psi_\pm$ and the DEC-creased condition.

\begin{proposition}\label{p:BoundaryTerms}
    Let $(M_\pm^n,g_\pm,k_\pm)$ and $F$ be as in Theorem \ref{theorem-main:PMT}. Suppose $\psi_\pm$ are spacetime spinors satisfying $\Phi(\psi_-)=(A+B\epsilon_+)\psi_+$ along $\Sigma^{n-1}$. Then
    \begin{align}\label{eq:boundaryterminequality}
    \mathbb{I}_-+\mathbb{I}_+\leq \frac12\int_{\Sigma^{n-1}}|\psi_+|^2 \left[ \left\langle \vec{H}_+ - F(\vec{H}_-),  \nu_+ \right\rangle+ \sqrt{ \left\langle F(\vec{H}_-) - \vec{H}_+, \tau_+ \right\rangle^2 + \left| \beta^\Delta \right|_{\gamma}^2} \right] dA.
    \end{align}
    In particular, the DEC-creased condition ensures $\mathbb{I_-}  + \mathbb{I_+} \leq 0$.
\end{proposition}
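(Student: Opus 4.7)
The plan is to transport every term in $\mathbb{I}_-$ across the bundle isometry $\Phi$ into $S_+|_{\Sigma^{n-1}}$, substitute the boundary condition $\Phi(\psi_-) = (A+B\epsilon_+)\psi_+$, and then combine with $\mathbb{I}_+$ so that the Dirac operator contributions cancel and only a quadratic boundary integrand in $\psi_+$ survives. Proposition \ref{prop:Phiproperties} identifies $\Phi(\nu_-\tau_-\psi_-)$, $\Phi(e^\alpha\tau_-\psi_-)$, and (via item $(iii)$ together with the $\mp$ convention in $\mathcal{D}_\pm$) $\Phi(\mathcal{D}_-\psi_-) = -\mathcal{D}_+ \Phi(\psi_-)$, so after transport the integrand of $\mathbb{I}_-$ becomes a sum of expressions involving $\mathcal{D}_+\bigl((A+B\epsilon_+)\psi_+\bigr)$, $(A+B\epsilon_+)\psi_+$, $\epsilon_+(A+B\epsilon_+)\psi_+$, and $\beta_-^\sharp\tau_+(A+B\epsilon_+)\psi_+$, with the inner product paired against $(A+B\epsilon_+)\psi_+$.

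The crucial algebraic step is the calculation of $\mathcal{D}_+\bigl((A+B\epsilon_+)\psi_+\bigr)$. Because $A$ and $B$ satisfy $e_\alpha(A) = \tfrac12 df(e_\alpha)\,B$ and $e_\alpha(B) = \tfrac12 df(e_\alpha)\,A$, and because \eqref{eq:epsilonproperties} gives $\nu_+\epsilon_+ = -\epsilon_+\nu_+$, $e^\alpha \epsilon_+ = \epsilon_+ e^\alpha$, and $[\nabla^{\Sigma,+},\epsilon_+]=0$, I would derive the identity
\begin{equation}
\mathcal{D}_+\bigl[(A+B\epsilon_+)\psi_+\bigr] = (A-B\epsilon_+)\,\mathcal{D}_+\psi_+ - \tfrac12 (A-B\epsilon_+)\, df^\sharp\, \tau_+\, \psi_+ .
\end{equation}
Inserting this into the transported $\mathbb{I}_-$ and using the self-adjointness of $\epsilon_+$ with respect to $\langle\cdot,\cdot\rangle$ together with the algebraic collapses $(A+B\epsilon_+)(A-B\epsilon_+) = 1$, $(A+B\epsilon_+)^2 = a+b\epsilon_+$, and $(A+B\epsilon_+)\epsilon_+(A+B\epsilon_+) = b+a\epsilon_+$ from \eqref{eq:abABproperties}, the $\mathcal{D}_+\psi_+$ contributions from $\mathbb{I}_-$ will cancel those from $\mathbb{I}_+$, the $df^\sharp\tau_+$ term will combine with the $\beta_\pm^\sharp\tau_+$ terms into $-(\beta^\Delta)^\sharp\tau_+$, and the remaining scalar coefficients will reassemble via \eqref{e:HRelation} into $\langle \vec{H}_+ - F(\vec{H}_-),\nu_+\rangle$ and $\langle \vec{H}_+ - F(\vec{H}_-),\tau_+\rangle$. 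The outcome will be
\begin{equation}
\mathbb{I}_- + \mathbb{I}_+ = \tfrac12 \int_{\Sigma^{n-1}} \Bigl(\langle \vec{H}_+ - F(\vec{H}_-), \nu_+\rangle\,|\psi_+|^2 + \langle \psi_+, \mathcal{E}\psi_+\rangle\Bigr)\, dA,
\end{equation}
where $\mathcal{E} := \langle \vec{H}_+ - F(\vec{H}_-),\tau_+\rangle\,\epsilon_+ - (\beta^\Delta)^\sharp\tau_+$.

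The final step is a pointwise spectral bound on $\mathcal{E}$. The endomorphism $\mathcal{E}$ is self-adjoint with respect to $\langle\cdot,\cdot\rangle$, and its square is a scalar multiple of the identity: the cross term vanishes because $\epsilon_+$ anticommutes with $\eta\tau_+$ for any $\eta\in T\Sigma^{n-1}$ (since $\epsilon_+ e^\alpha \tau_+ = e^\alpha \epsilon_+ \tau_+ = -e^\alpha \tau_+ \epsilon_+$), while $\epsilon_+^2 = 1$ and $(\eta\tau_+)^2 = |\eta|_\gamma^2$. Hence $\mathcal{E}^2 = \bigl(\langle F(\vec{H}_-) - \vec{H}_+, \tau_+\rangle^2 + |\beta^\Delta|_\gamma^2\bigr)\,\mathrm{Id}$, so Cauchy--Schwarz gives $\langle \psi_+, \mathcal{E}\psi_+\rangle \leq |\psi_+|^2 \sqrt{\langle F(\vec{H}_-) - \vec{H}_+, \tau_+\rangle^2 + |\beta^\Delta|_\gamma^2}$, which produces \eqref{eq:boundaryterminequality}. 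The DEC-creased condition \eqref{eq:DECCreased} then forces the integrand there to be pointwise nonpositive, yielding $\mathbb{I}_- + \mathbb{I}_+ \leq 0$. The main obstacle is the careful bookkeeping required to derive the displayed transformation law for $\mathcal{D}_+\bigl((A+B\epsilon_+)\psi_+\bigr)$ and to verify that the $df^\sharp$ term produced by differentiating the hyperbolic rotation conspires exactly with $\beta_+ - \beta_-$ to form $\beta^\Delta$; once that identification is in place, the reassembly into the quantities $\langle \vec{H}_+ - F(\vec{H}_-),\nu_+\rangle$ and $\langle \vec{H}_+ - F(\vec{H}_-),\tau_+\rangle$ is routine, and the pointwise spectral estimate on $\mathcal{E}$ is immediate.
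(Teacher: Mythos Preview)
Your proposal is correct and follows essentially the same route as the paper: transport $\mathbb{I}_-$ through $\Phi$, substitute the boundary condition, use the algebraic collapses $(A+B\epsilon_+)(A-B\epsilon_+)=1$ and $(A+B\epsilon_+)^2=a+b\epsilon_+$ to cancel the Dirac terms and reassemble the $\vec{H}$ components, then bound the residual endomorphism by Cauchy--Schwarz. Your explicit transformation law $\mathcal{D}_+\bigl[(A+B\epsilon_+)\psi_+\bigr]=(A-B\epsilon_+)\mathcal{D}_+\psi_+-\tfrac12(A-B\epsilon_+)df^\sharp\tau_+\psi_+$ and the observation that $\mathcal{E}^2$ is scalar are a slightly cleaner packaging of steps the paper carries out line by line, but the argument is the same.
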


\begin{proof}
We will show that $\mathbb{I}_-+\mathbb{I}_+$ takes the form
\begin{align}\label{eq:boundarytermequality}
\begin{split}
\frac12\int_{\Sigma^{n-1}}\left(|\psi_+|^2 \left\langle \vec{H}_+ - F(\vec{H}_-),  \nu_+ \right\rangle+ \left\langle\psi_+,\ \left( \left\langle \vec{H}_+ - F(\vec{H}_-),\tau_+ \right\rangle \nu_+ - \beta^\Delta \right) \tau_+ \psi_+\right\rangle \right)dA,
\end{split}
\end{align}
which implies inequality \eqref{eq:boundaryterminequality} with the help of Cauchy-Schwartz. The computation proceeds by computing each term in the integrand of $\mathbb{I}_-$ entirely in terms of $(M_+^n,g_+,k_+)$ data. 

For the the boundary Dirac operator term note that 
\begin{align}\label{eq:PhiCommuteDirac}
    \Phi( \mathcal{D}_-\psi_-)&=\Phi(\nu_-e^\alpha\nabla^{\Sigma,-}_\alpha\psi_-)=\nu_+e^\alpha\nabla^{\Sigma,+}_\alpha\Phi(\psi_-)=-\mathcal{D}_+\Phi(\psi_-),
\end{align}
where we have used Proposition \ref{prop:Phiproperties}.
Since $\Phi$ is an isometry, it follows that 
\begin{equation}\label{oafnpoianpih}
\left\langle\psi_-,\ \mathcal{D}_-\psi_- \right\rangle=-\left\langle\Phi(\psi_-),\ \mathcal{D}_+\Phi(\psi_-)\right\rangle.
\end{equation}
Before continuing, note the elementary computations $dA=\frac12 Bdf$, $dB=\frac12 Adf$, and $\left\langle\phi,\ (s+t\epsilon_\pm)\psi\right\rangle=\left\langle(s+t\epsilon_\pm)\phi,\ \psi\right\rangle$ for any numbers $s$, $t$, where we have used that $\epsilon_\pm^2=1$.  
By employing the boundary conditions \eqref{eq:mainPDE} and Proposition \ref{prop:epsilonidentities}, it holds that
\begin{align}
\begin{split}
    \left\langle\Phi(\psi_-),\ \mathcal{D}_+\Phi(\psi_-)\right\rangle&=\left\langle(A+B\epsilon_+)\psi_+,\ \mathcal{D}_+(A+B\epsilon_+)\psi_+\right\rangle\\
    {}&=-\left\langle(A+B\epsilon_+)\psi_+,\ \nu_+e^\alpha\nabla_\alpha^{\Sigma,+}(A+B\epsilon_+)\psi_+\right\rangle\\
    {}&=-\left\langle(A+B\epsilon_+)\psi_+,\ \nu_+(dA+dB\epsilon_+)\psi_+-(A-B\epsilon_+)\mathcal{D}_+\psi_+\right\rangle\\
    {}&=-\frac12 \left\langle(A+B\epsilon_+)\psi_+,\ \nu_+(A+B\epsilon_+)df\psi_+\right\rangle+  \left\langle\psi_+,\mathcal{D}_+\psi_+\right\rangle\\
    {}&=-\frac12 \left\langle (A+B\epsilon_+)\psi_+,\ (B-A\epsilon_+)\nu_+df\psi_+\right\rangle+\left\langle\psi_+,\ \mathcal{D}_+\psi_+\right\rangle\\
    {}&=\frac12 \left\langle(A^2-B^2)\epsilon_+\psi_+,\ \nu_+df\psi_+\right\rangle+  \left\langle\psi_+,\ \mathcal{D}_+\psi_+\right\rangle\\
    {}&=\frac12 \left\langle\psi_+,\ \epsilon_+\nu_+df\psi_+\right\rangle+  \left\langle\psi_+,\ \mathcal{D}_+\psi_+\right\rangle\\
    {}&=\frac12 \left\langle\psi_+,\ \tau_+df\psi_+\right\rangle+  \left\langle\psi_+,\ \mathcal{D}_+\psi_+\right\rangle,
\end{split}
\end{align}
and hence together with \eqref{oafnpoianpih} we have
\begin{align}
    \left\langle \psi_-,\ \mathcal{D}_-\psi_-\right\rangle+\left\langle \psi_+,\ \mathcal{D}_+\psi_+\right\rangle= \frac12\left\langle\psi_+,
    \ df\tau_+\psi_+\right\rangle.\label{eq:diracterms}
\end{align}

Next, consider the boundary terms involving the connection 1-form. Similar manipulations produce
\begin{align}
\begin{split}
    \left\langle\psi_-,\ k_-(\nu_-,\cdot)\tau_-\psi_-\right\rangle&=\left\langle\Phi(\psi_-),\ \Phi \left(k_-(\nu_-,\cdot)\tau_-\psi_-\right)\right\rangle \\
    {}&=\left\langle\Phi(\psi_-),\ k_-(\nu_-,\cdot)\tau_+\Phi(\psi_-)\right\rangle \\
    &=\left\langle(A+B\epsilon_+)\psi_+,\ k_-(\nu_-,\cdot)\tau_+(A+B\epsilon_+)\psi_+\right\rangle \\
    &= \left\langle(A+B\epsilon_+)\psi_+,\ (A-B\epsilon_+) k_-(\nu_-,\cdot)\tau_+\psi_+\right\rangle \\
    &= \left\langle \psi_+,\ k_-(\nu_-,\cdot)\tau_+\psi_+\right\rangle ,
\end{split}
\end{align}
where we have used the fact that $\tau_+$ anti-commutes with $\epsilon_+$. The change of gauge formula \eqref{eq:oneFormDiff} now implies 
\begin{align}
\begin{split}
    \frac12\left\langle\psi_-,\ k_-(\nu_-,\cdot)\tau_-\psi_-\right\rangle-\frac12\left\langle\psi_+,\ k_+(\nu_+,\cdot)\tau_+\psi_+\right\rangle&=\frac12\left\langle\psi_+,\ \left(k_-(\nu_-,\cdot) -  k_+(\nu_+,\cdot) \right)\tau_+\psi_+\right\rangle \\
    &= - \frac12\left\langle\psi_+,\ \beta^\Delta \tau_+\psi_+\right\rangle - \frac12\left\langle\psi_+,\ df \tau_+\psi_+\right\rangle.
\end{split}
\label{eq:connectionterm}
\end{align}
Consider now the terms involving the mean curvatures, and observe that Proposition \eqref{prop:epsilonidentities} yields 
\begin{align}
\begin{split}
    \left\langle\psi_-,\ (H_-+\left(\Tr_\gamma k_-\right)\nu_-\tau_-)\psi_-\right\rangle{}&=\left\langle\Phi(\psi_-),\ \Phi((H_-+\left(\Tr_\gamma k_-\right)\nu_-\tau_-)\psi_-)\right\rangle\\
    {}&=\left\langle\Phi(\psi_-),\ (H_- + \left(\Tr_\gamma k_-\right)\nu_+\tau_+)\Phi(\psi_-)\right\rangle\\
    {}&=\left\langle(A+B\epsilon_+)\psi_+,\ (H_- + \left(\Tr_\gamma k_-\right)\epsilon_+)(A+B\epsilon_+)\psi_+\right\rangle\\
    {}&=\left\langle(A+B\epsilon_+)\psi_+,\ (A+B\epsilon_+)(H_- + \left(\Tr_\gamma k_-\right)\epsilon_+)\psi_+\right\rangle\\
    {}&=\left\langle\psi_+,\ (A+B\epsilon_+)^2(H_- + \left(\Tr_\gamma k_-\right)\epsilon_+)\psi_+\right\rangle\\
    {}&=\left\langle\psi_+,\ (a+b\epsilon_+)(H_- + \left(\Tr_\gamma k_-\right)\epsilon_+)\psi_+\right\rangle\\
    {}&= \left\langle \psi_+,\ \left(aH_- + b\Tr_\gamma k_-+\left(bH_-+a\Tr_\gamma k_-\right)\epsilon_+ \right)\psi_+\right\rangle.
\end{split}
\end{align}
It follows that 
\begin{align}
\begin{split}
    &-\left\langle\psi_-,\ \frac12 \left(H_- + \left(\Tr_\gamma k_-\right)\nu_-\tau_- \right)\psi_-\right\rangle + \left\langle\psi_+,\ \frac12 \left(H_+ + \left(\Tr_\gamma k_+\right)\nu_+\tau_+ \right)\psi_+\right\rangle\\
    {}&=\frac12|\psi_+|^2 \left[H_+ - \left(aH_-+b\left(\Tr_\gamma k_-\right)\right)\right]+\frac12\left\langle\psi_+,\epsilon_+\psi_+\right\rangle \left[\Tr_\gamma k_+ -\left(bH_-+a\left(\Tr_\gamma k_-\right)\right) \right] \\
    &= \frac12 |\psi_+|^2 \left\langle \vec{H}_+ - F(\vec{H}_-),  \nu_+ \right\rangle + \frac12 \left\langle\psi_+,\ \epsilon_+\psi_+\right\rangle \left\langle \vec{H}_+ - F(\vec{H}_-),\tau_+ \right\rangle \\
    &= \frac12 |\psi_+|^2 \left\langle \vec{H}_+ - F(\vec{H}_-),  \nu_+ \right\rangle + \frac12 \left\langle\psi_+,\ \left\langle \vec{H}_+ - F(\vec{H}_-),\tau_+ \right\rangle \nu_+ \tau_+ \psi_+\right\rangle,
\end{split}\label{eq:meancurvdiff}
\end{align}
where we have used \eqref{e:HRelation}. Summing \eqref{eq:diracterms}, \eqref{eq:connectionterm}, and \eqref{eq:meancurvdiff} gives the desired formula \eqref{eq:boundarytermequality}.
\end{proof}
    

\section{Existence of Solutions to the Transmission Boundary Value Problem}

In this section we will study the system \eqref{eq:mainPDE}, where $(M_\pm^n,g_\pm,k_\pm)$ are as in Theorem \ref{theorem-main:PMT}. Weighted spaces and associated Poincar\'e inequalities play an important role in the existence theory of Dirac harmonic spinors in the asymptotically flat setting, see \cite{BartnikChrusciel} and \cite{ParkerTaubes}. For convenience we will set $M' = M_+^n \sqcup M_-^n$ to be the \textit{disjoint} union of $M_+^n$ and $M_-^n$, so that $\partial M' = \Sigma^{n-1} \sqcup \Sigma^{n-1}$. We write $W^{1,2}_{-q_*}(M')$ for spinors $\psi$ over $M'$ consisting of pairs $\psi_\pm\in W^{1,2}_{-q_*}(S_\pm)$, and similarly for other functions spaces such as $C_c^\infty(M')$. 
To incorporate the boundary conditions, define the Hilbert space  
\begin{align}
    \mathcal{H} := \left\{ \psi \in W^{1,2}_{-q_*}(M')  \ \Big|\  (A + B\epsilon_+ )\psi_+|_{\Sigma^{n-1}} =  \Phi(\psi_-|_{\Sigma^{n-1}})\right\},
\end{align}
where $q_{*}=\frac{n-2}{2}$. Since the trace operation $\mathcal{T}:W^{1,2}(M')\rightarrow H^{\frac{1}{2}}(\partial M')$ is continuous, $\mathcal{H}$ is indeed a closed subspace of $W^{1,2}_{-q_*}(M')$ and hence a Hilbert space. This space admits approximation by smooth spinors with compact support. 

\begin{lemma}\label{l:smoothApproxH}
$\mathcal{H}\cap C_c^\infty(M')$ is dense in $\mathcal{H}$.
\end{lemma}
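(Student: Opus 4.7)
The plan is to approximate any $\psi=(\psi_-,\psi_+)\in\mathcal{H}$ first by compactly supported elements, then by smooth compactly supported ones, preserving the transmission condition throughout. Since $M_-^n$ is compact, only the asymptotic end of $M_+^n$ requires truncation. Choose a cutoff $\chi_R\in C^\infty(M_+^n)$ which equals $1$ on $\{r\le R\}$ and on a fixed neighborhood of $\Sigma^{n-1}$, vanishes for $r\ge 2R$, and satisfies $|\nabla\chi_R|\le C/R$. Then $(\psi_-,\chi_R\psi_+)\in\mathcal{H}$, since the boundary trace is unchanged, and the family converges to $\psi$ in $W^{1,2}_{-q_*}$ as $R\to\infty$: the error $\psi_+\nabla\chi_R$ is controlled in $L^2$ by $\int_{R\le r\le 2R}|\psi_+/r|^2\to 0$, using the Hardy-type bound built into the weighted norm. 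It therefore suffices to approximate compactly supported elements of $\mathcal{H}$ by elements of $\mathcal{H}\cap C_c^\infty(M')$.

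Next, introduce collar neighborhoods $U\cong \Sigma^{n-1}\times[0,\delta)$ and $V\cong \Sigma^{n-1}\times[0,\delta)$ of $\Sigma^{n-1}$ in $M_+^n$ and $M_-^n$ via the geodesic flows of $\nu_\pm$, and a smooth partition of unity $1=\eta+\zeta$ on $M'$ with $\zeta$ supported in $U\sqcup V$, $\zeta\equiv 1$ on $\Sigma^{n-1}$, and $\eta\equiv 0$ on $\Sigma^{n-1}$. Scalar multiplication by $\eta$ (respectively $\zeta$) preserves the transmission condition, so $\eta\psi,\zeta\psi\in\mathcal{H}$. The interior piece $\eta\psi$ is compactly supported in $\interior M'$ and can be smoothed by ordinary mollification in coordinate charts, producing smooth compactly supported sections which vanish near $\partial M'$ and therefore lie trivially in $\mathcal{H}$.

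The boundary piece $\zeta\psi$ requires the main technical input. Using $(A-B\epsilon_+)(A+B\epsilon_+)=1$ from Proposition \ref{prop:epsilonidentities}, the transmission condition $(A+B\epsilon_+)\psi_+=\Phi(\psi_-)$ is equivalent to $\psi_+=\widetilde\Phi(\psi_-)$, where $\widetilde\Phi:=(A-B\epsilon_+)\circ\Phi:S_-|_{\Sigma^{n-1}}\to S_+|_{\Sigma^{n-1}}$ is a smooth bundle isomorphism. Extend $\widetilde\Phi$ smoothly into the collars by parallel transport along $\nu_\pm$, and use it as a clutching map to glue $S_+|_U$ and $S_-|_V$ into a single smooth vector bundle $\widetilde S$ over the double collar $\Sigma^{n-1}\times(-\delta,\delta)$. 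Elements of $\mathcal{H}$ supported in $U\sqcup V$ correspond precisely to $W^{1,2}$ sections of $\widetilde S$ with equivalent norms. Mollifying $\zeta\psi$, viewed as a section of $\widetilde S$, via convolution in local trivializations and a partition of unity on $\Sigma^{n-1}\times(-\delta,\delta)$, produces smooth compactly supported approximations; these translate back to pairs of smooth spinors on $M_\pm^n$ satisfying the transmission condition. Summing the interior and boundary approximations yields the desired element.

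The main obstacle is the careful construction of the smooth bundle $\widetilde S$ and the verification that $\mathcal{H}$-elements supported in the collars correspond to $W^{1,2}$ sections of $\widetilde S$ with equivalent norms. This is a local matter: in coordinate charts over $\Sigma^{n-1}$ trivializing $S_\pm$, the extended $\widetilde\Phi$ becomes a smooth matrix-valued function, and the induced pullback is bi-Lipschitz on $W^{1,2}$, after which the approximation argument is standard.
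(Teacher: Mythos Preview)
Your argument is correct and takes a genuinely different route from the paper. The paper first approximates $\psi$ by an arbitrary $\psi'\in C_c^\infty(M')$ (ignoring the transmission condition), then corrects the boundary values by subtracting $\mathcal{E}\mathcal{K}\mathcal{T}\psi'$, where $\mathcal{E}$ is a smooth bounded right-inverse to the trace and $\mathcal{K}$ is an idempotent on $H^{1/2}(\partial M')$ whose kernel is precisely the transmission constraint; boundedness of $\mathcal{E}\mathcal{K}\mathcal{T}$ together with $\mathcal{K}\mathcal{T}\psi=0$ controls the correction. Your approach instead never leaves $\mathcal{H}$: you preserve the transmission condition throughout by cutting off away from $\Sigma^{n-1}$, splitting via a partition of unity, and recasting the collar piece as a section of a single smooth bundle $\widetilde S$ over the doubled collar, where the transmission condition becomes ordinary continuity and standard mollification applies. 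The paper's method is shorter and purely functional-analytic, relying only on the trace/extension formalism and the algebraic identity $\mathcal{K}^2=\mathcal{K}$; your method is more geometric and makes explicit that the transmission problem is locally equivalent to an interior problem on a glued bundle, at the cost of verifying the $W^{1,2}$ equivalence (which, as you note, reduces to the fact that $W^{1,2}$ functions on either side of a hypersurface with matching traces glue to a global $W^{1,2}$ function, together with $\widetilde\Phi$ being a smooth bundle isomorphism with bounded inverse).
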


\begin{proof}
Take $\psi \in \mathcal{H}$, and let $\psi'\in C_c^\infty(M')$ be close to $\psi$ in $W^{1,2}_{-q_*}(M')$. The approximation $\psi'$ may not satisfy the desired boundary conditions, and thus $\psi'$ may not lie in $\mathcal{H}$. To rectify this situation, consider a bounded right-inverse $\mathcal{E}: H^{\frac{1}{2}}(\partial M') \to W^{1,2}_{-q_*}(M')$ for the trace operator $\mathcal{T}$; this may be constructed to send smooth spinors on $\partial M'$ to smooth spinors on $M'$ which vanish on the ends of $M_+^n$. Define $\mathcal{K}: H^{\frac12}(\partial M') \to  H^{\frac12}(\partial M')$ by
\begin{align}\label{eq:calK}
\varphi_+\oplus \varphi_- \mapsto \frac12 \left(\varphi_+ - (A- B\epsilon_+ ) \Phi(\varphi_-) \right)\oplus\frac12 \left( \varphi_- - (A+B\epsilon_- )\Phi^{-1}(\varphi_+)\right).
\end{align}
Note that $\mathcal{K} \mathcal{T} \varphi = 0$ if and only if $\varphi \in \mathcal{H}$, and that $\mathcal{K}^2 = \mathcal{K}$.
Set $\psi '' = \psi' - \mathcal{E} \mathcal{K} \mathcal{T} \psi'$, and observe that this spinor is smooth, vanishes on the ends of $M_+^n$, and lies within $\mathcal{H}$ since
\begin{align}
\mathcal{K}\mathcal{T} \psi'' = \mathcal{K}\mathcal{T}\psi' - \mathcal{K}^2 \mathcal{T}\psi' = 0.
\end{align} 
Furthermore, using that $\mathcal{E}\mathcal{K}\mathcal{T} \psi = 0$ and the boundedness of the maps involved yields
\begin{align}
\|\mathcal{E}\mathcal{K}\mathcal{T} \psi'\|_{W^{1,2}_{-q_*}(M')} &= \|\mathcal{E}\mathcal{K}\mathcal{T} (\psi' - \psi)\|_{W^{1,2}_{-q_*}(M')} \leq C \|\psi' - \psi\|_{W^{1,2}_{-q_*}(M')}
\end{align}
for some constant $C$, and thus the triangle inequality gives
\begin{align}
\|\psi - \psi'' \|_{W^{1,2}_{-q_*}(M')} \leq (1+C)\|\psi' - \psi\|_{W^{1,2}_{-q_*}(M')}.
\end{align}
The spinor $\psi''$ is the desired approximation to $\psi$.
\end{proof}

Continuing with the development of the technical tools needed for the proof of existence, we will next establish Poincar\'e-type estimates for spinors in the function space $\mathcal{H}$.

\begin{lemma}\label{SpaceTimeNablaEstimate}
Let $(M_\pm^n,g_\pm,k_\pm)$ be initial data sets with $M_-^n$ compact and $M_+^n$ asymptotically flat. 
Then there exists a constant $C$ depending on the geometry of the initial data such that:
\begin{enumerate}[label=(\roman*), topsep=3pt, itemsep=0ex]
\item $\|\psi\|^2_{W^{1,2}_{-q_*}(S_+)} \leq C \|\overline\nabla^+ \psi \|^2_{L^2(S_+)}$ for $\psi \in W^{1,2}_{-q_*}(S_+),$
\item $\|\psi\|^2_{W^{1,2}(S_-)} \leq C \left(\|\overline{\nabla}^- \psi\|^2_{L^2(S_-)} + \|\psi\|^2_{L^2(S_-|_{\Sigma^{n-1}})} \right)$ for $\psi \in W^{1,2}(S_-)$.
\end{enumerate}
\end{lemma}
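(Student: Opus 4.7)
The plan is to prove both parts by contradiction, combining Rellich–Kondrashov compactness with a unique-continuation argument for Sen-parallel spinors. The basic pointwise tool is the estimate
\begin{equation*}
|\nabla^\pm \psi|^2 \leq 2|\overline{\nabla}^\pm\psi|^2 + C|k_\pm|^2|\psi|^2,
\end{equation*}
which is immediate from the definition $\overline{\nabla}^\pm_i \psi = \nabla^\pm_i \psi + \tfrac12 (k_\pm)^j{}_i e_j\tau_\pm \psi$ and the fact that Clifford multiplication by a vector $v$ has operator norm $|v|$ on $(S_\pm,\langle\cdot,\cdot\rangle)$.

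For part (ii), assume the estimate fails and extract a sequence $\psi_n\in W^{1,2}(S_-)$ with $\|\psi_n\|_{W^{1,2}}=1$ but $\|\overline{\nabla}^-\psi_n\|_{L^2}^2 + \|\psi_n\|_{L^2(S_-|_{\Sigma^{n-1}})}^2\to 0$. Since $M_-^n$ is compact, Rellich–Kondrashov yields a subsequence with $\psi_n\to\psi_\infty$ strongly in $L^2$ and weakly in $W^{1,2}$. The weak limit satisfies $\overline{\nabla}^-\psi_\infty=0$, while continuity of the trace operator $W^{1,2}(M_-^n)\to L^2(\Sigma^{n-1})$ forces $\psi_\infty|_{\Sigma^{n-1}}=0$. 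The relation $\overline{\nabla}^-\psi_\infty=0$ is equivalent along any curve to the linear first-order ODE $\nabla^-_{\dot\gamma}\psi_\infty=-\tfrac12 k_-(\dot\gamma,\cdot)\tau_-\psi_\infty$, so the vanishing boundary data combined with connectedness of $M_-^n$ yields $\psi_\infty\equiv 0$. The pointwise comparison now gives
\begin{equation*}
\|\nabla^-\psi_n\|_{L^2}^2 \leq 2\|\overline{\nabla}^-\psi_n\|_{L^2}^2 + C\|k_-\|_{L^\infty}^2\|\psi_n\|_{L^2}^2 \longrightarrow 0,
\end{equation*}
together with $\|\psi_n\|_{L^2}\to 0$ from strong convergence, contradicting $\|\psi_n\|_{W^{1,2}}=1$.

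Part (i) follows the same scheme with two modifications adapted to the asymptotically flat setting. First, I invoke the standard weighted Hardy inequality $\|\psi/r\|_{L^2}\leq C_H\|\nabla^+\psi\|_{L^2}$ on $W^{1,2}_{-q_*}(S_+)$ to control the weight component of the norm once the gradient is controlled. Second, to rule out a nonzero Sen-parallel weak limit $\psi_\infty\in W^{1,2}_{-q_*}(S_+)$, I use the ODE $\nabla^+_{\dot\gamma}\psi_\infty=-\tfrac12 k_+(\dot\gamma,\cdot)\tau_+\psi_\infty$ along a unit-speed geodesic ray $\gamma(t)$ to infinity. Since $|k_+|=O(r^{-q-1})$ with $q>0$, the coefficient is integrable on the ray, so Grönwall forces $|\psi_\infty|$ to be bounded below along $\gamma$ whenever it is nonzero at $\gamma(0)$. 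By continuous dependence on the initial direction, $|\psi_\infty|^2\geq c>0$ on a tubular neighborhood of $\gamma$ at infinity, which yields $\int|\psi_\infty|^2/r^2\,dV=\infty$ in dimension $n\geq 3$, contradicting membership in $W^{1,2}_{-q_*}$. Hence $\psi_\infty\equiv 0$. To upgrade to norm convergence, split
\begin{equation*}
\int_{M_+^n}|k_+|^2|\psi_n|^2\,dV = \int_K |k_+|^2|\psi_n|^2\,dV + \int_{M_+^n\setminus K}|k_+|^2|\psi_n|^2\,dV.
\end{equation*}
On $K$ compact the first term vanishes by $L^2_{loc}$ convergence, while the strict decay $q>q_*$ yields $|k_+|^2r^2=O(r^{-2q})$, so enlarging $K$ bounds the second integral by $\varepsilon\|\psi_n/r\|_{L^2}^2\leq\varepsilon$. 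The pointwise comparison gives $\|\nabla^+\psi_n\|_{L^2}\to 0$, and weighted Hardy then gives $\|\psi_n/r\|_{L^2}\to 0$, contradicting the normalization.

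The main obstacle is the tube argument of part (i) showing that a Sen-parallel spinor in $W^{1,2}_{-q_*}(S_+)$ must vanish. This requires combining the integrability of $|k_+|$ along geodesic rays with continuous dependence on initial data to produce a divergent weighted integral. The needed pointwise regularity of $\psi_\infty$ is obtained by bootstrapping from $\overline{\nabla}^+\psi_\infty=0$, which expresses $\nabla^+\psi_\infty$ as a bounded multiple of $\psi_\infty$.
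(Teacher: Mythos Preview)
Your argument for part (ii) is essentially the paper's: both use contradiction, Rellich compactness, and the ODE for Sen-parallel spinors to rule out a nonzero limit with vanishing boundary trace. The only cosmetic difference is that the paper normalizes $\|\psi_i\|_{L^2}=1$ rather than the full $W^{1,2}$ norm, so the contradiction arrives immediately from $\psi_\infty=0$ and strong $L^2$ convergence, without needing your final pointwise-comparison step.

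For part (i) the approaches genuinely diverge. The paper simply quotes the weighted Poincar\'e inequality $\|\psi/r\|_{L^2}\le C_1\|\overline\nabla^+\psi\|_{L^2}$ for the Sen connection from \cite{BartnikChrusciel}*{Theorem 9.5}, after which the full $W^{1,2}_{-q_*}$ bound follows in three lines via $\|\nabla^+\psi\|\le\|\overline\nabla^+\psi\|+\sup(r|k_+|)\,\|\psi/r\|$. You instead reprove this inequality from scratch by contradiction, using the Levi-Civita Hardy inequality and a geometric argument to exclude a nonzero Sen-parallel limit in $W^{1,2}_{-q_*}$. This is more self-contained but considerably longer, and one step needs tightening: a literal \emph{tubular} neighborhood of a single ray has bounded cross-section, giving $\int_{\mathrm{tube}} r^{-2}\,dV<\infty$ in every dimension. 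The divergence you need requires the \emph{conical} region swept out by rays with nearby initial directions, whose cross-sections grow like $r^{n-1}$; your phrase ``continuous dependence on the initial direction'' does produce exactly this, but the word ``tubular'' is misleading. With that reading the argument is correct.
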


\begin{proof}
Since $M_+^n$ is connected and contains an asymptotically flat end, the result \cite{BartnikChrusciel}*{Theorem 9.5} implies
that there exists $C_1$ such that the weighted Poincar\'e inequality
\begin{align} \label{STNablaPoincare}
    \|\psi/r\|^2_{L^2(S_+)} \leq C_1 \|\overline \nabla^+ \psi \|^2_{L^2(S_+)}
\end{align}
holds for all $\psi\in W^{1,2}_{-q_*}(S_+)$. Therefore, there exists a constant $C_2$ so that
\begin{align}\label{fainofinaqoihh}
\begin{split}
    \| \psi \|_{W^{1,2}_{-q_*}(S_+)} &\leq \|\nabla^+ \psi\|_{L^2(S_+)} + \|\psi/r\|_{L^2(S_+)} \\
    & \leq \|\overline{\nabla}^+ \psi \|_{L^2(S_+)} +  \||k_+|_{g_+} \psi \|_{L^2(S_+)}+ \|\psi/r\|_{L^2(S_+)}  \\
    &\leq \|\overline{\nabla}^+ \psi \|_{L^2(S_+)}  +  \left(\sup(r|k_+|) +1 \right) \|\psi / r\|_{L^2(S_+)} \\
    & \leq C_2 \|\overline{\nabla}^+ \psi \|_{L^2(S_+)},
\end{split}
\end{align}
where the last inequality follows from the asymptotic decay of $k_+$ and \eqref{STNablaPoincare}. This yields part $(i)$.

Consider now part $(ii)$. We will first demonstrate the following Poincar\'e-type inequality
\begin{align}\label{PoincareWBoundary}
     \|\psi\|^2_{L^2(S_-)} & \leq C_3 \left(\|\overline{\nabla}^- \psi\|^2_{L^2(S_-)} + \|\psi\|^2_{L^2(S_-|_{\Sigma^{n-1}})} \right),
\end{align}
for $\psi\in W^{1,2}(S_-)$. Suppose the inequality does not hold, then there exists a sequence of spinors $\psi_i$ such that $\|\psi_i\|_{L^2(S_-)} = 1$ and $\|\overline{\nabla}^- \psi_i\|^2_{L^2(S_-)} + \|\psi_i\|^2_{L^2(S_-|_{\Sigma^{n-1}})} \to 0$. As in \eqref{fainofinaqoihh}, this shows that the sequence is bounded in $W^{1,2}(S_-)$.
By Rellich's theorem, after passing to a subsequence, $\psi_i$ strongly converges in $L^2(S_-)$ and weakly converges in $W^{1,2}(S_-)$. Let $\psi = \lim_{i \to \infty} \psi_i$. Then by weak lower semi-continuity of the norm and the trace theorem, it follows that $\overline{\nabla}^- \psi = 0$ and $\psi|_{\Sigma^{n-1}} = 0$. 
In particular, the limit spinor satisfies
\begin{align}
    \nabla^-_i \psi = -\frac{1}{2}(k_-)^j_i e_j \tau_-\psi.
\end{align}
Hence $\psi$ is smooth, and
integrating along curves connecting interior points to the boundary where $\psi = 0$, we find that $\psi$ vanishes everywhere. This contradicts $\|\psi\|_{L^2(S_-)} = 1$, yielding the desired result.
\end{proof}

The two Poincar\'e inequalities of Lemma \ref{SpaceTimeNablaEstimate} may be pasted together using the boundary conditions, producing a global weighted Poincar\'e inequality for spinors in $\mathcal{H}$.

\begin{corollary}\label{t:mainPoincare}
    The inner product $\langle\langle f, g \rangle\rangle := \int_{M'} \left\langle \overline{\nabla} f , \overline{\nabla} g \right\rangle dV$ is equivalent to the $W^{1,2}_{-q_*}(M')$-inner product on $\mathcal{H}$. In particular, there is a constant $C$ such that 
\begin{equation}
    \|\psi\|_{W^{1,2}_{-q_*}(M')} \leq C\|\overline \nabla \psi\|_{L^2(M')}
\end{equation}
for all $\psi\in\mathcal{H}$.
\end{corollary}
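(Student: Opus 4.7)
The plan is to glue the two Poincar\'e-type inequalities of Lemma \ref{SpaceTimeNablaEstimate} together using the transmission boundary condition encoded in $\mathcal{H}$, and then dispense with the reverse inequality using that $\overline{\nabla}$ differs from $\nabla$ by a zero-order operator involving $k_\pm$. By Lemma \ref{l:smoothApproxH} it suffices to argue for $\psi=\psi_-\oplus\psi_+\in \mathcal{H}\cap C_c^\infty(M')$, and then extend by density.

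For the main direction, $\|\psi\|_{W^{1,2}_{-q_*}(M')}\le C\|\overline{\nabla}\psi\|_{L^2(M')}$, I would proceed as follows. First, Lemma \ref{SpaceTimeNablaEstimate}(i) immediately gives
\begin{equation}
\|\psi_+\|_{W^{1,2}_{-q_*}(S_+)}^2\le C_1\|\overline{\nabla}^+\psi_+\|_{L^2(S_+)}^2.
\end{equation}
Next, Lemma \ref{SpaceTimeNablaEstimate}(ii) provides
\begin{equation}
\|\psi_-\|_{W^{1,2}(S_-)}^2\le C_2\bigl(\|\overline{\nabla}^-\psi_-\|_{L^2(S_-)}^2+\|\psi_-\|_{L^2(S_-|_{\Sigma^{n-1}})}^2\bigr),
\end{equation}
and the boundary term is the feature that needs the $\mathcal{H}$-constraint. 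Using the transmission identity $\Phi(\psi_-)=(A+B\epsilon_+)\psi_+$ on $\Sigma^{n-1}$ together with the fact that $\Phi$ is a fiberwise isometry of $\langle\cdot,\cdot\rangle$, that $\epsilon_+$ is a fiberwise isometry by \eqref{eq:<>inv}, and that $A$, $B$ are bounded on the compact hypersurface $\Sigma^{n-1}$, I get a pointwise bound $|\psi_-|\le C_3|\psi_+|$ on $\Sigma^{n-1}$, hence
\begin{equation}
\|\psi_-\|_{L^2(\Sigma^{n-1})}^2\le C_3^2\,\|\psi_+\|_{L^2(\Sigma^{n-1})}^2.
\end{equation}
The trace theorem applied on $M_+^n$ (using that $\Sigma^{n-1}$ is compact and sits inside the interior of the weighted space) gives $\|\psi_+\|_{L^2(\Sigma^{n-1})}^2\le C_4\|\psi_+\|_{W^{1,2}_{-q_*}(S_+)}^2$, and feeding this back through the first inequality produces $\|\psi_-\|_{L^2(\Sigma^{n-1})}^2\le C_5\|\overline{\nabla}^+\psi_+\|_{L^2(S_+)}^2$. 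Substituting into the second Poincar\'e inequality and combining with the first yields the claimed bound.

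For the reverse inequality, I would use the definition $\overline{\nabla}^\pm_i\psi=\nabla^\pm_i\psi+\tfrac12(k_\pm)_i^j e_j\tau_\pm\psi$ and the fact that $\tau_\pm$ and Clifford multiplication by unit vectors are pointwise isometries, giving $|\overline{\nabla}^\pm\psi|\le|\nabla^\pm\psi|+\tfrac12|k_\pm|_{g_\pm}|\psi|$. On $M_-^n$ the factor $|k_-|$ is bounded and the resulting $\|\psi_-\|_{L^2}$ term is part of $\|\psi_-\|_{W^{1,2}(S_-)}$. On $M_+^n$ the asymptotic flatness decay $|k_+|=O(r^{-q-1})$ gives $|k_+|\le (\sup r|k_+|)\cdot r^{-1}$, so $\||k_+|\psi_+\|_{L^2}\le C\|\psi_+/r\|_{L^2}$, which is part of $\|\psi_+\|_{W^{1,2}_{-q_*}(S_+)}$. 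Hence $\|\overline{\nabla}\psi\|_{L^2(M')}\le C\|\psi\|_{W^{1,2}_{-q_*}(M')}$.

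The one nonroutine ingredient is the boundary absorption step, where the fact that $\mathcal{H}$ enforces $\Phi(\psi_-)=(A+B\epsilon_+)\psi_+$ lets the boundary $L^2$ norm of $\psi_-$ be controlled by the weighted Sobolev norm of $\psi_+$ on the asymptotically flat side; everything else is either direct from Lemma \ref{SpaceTimeNablaEstimate} or a consequence of the zero-order nature of $\overline{\nabla}-\nabla$ combined with the decay of $k_+$.
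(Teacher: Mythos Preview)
Your proof is correct and follows essentially the same route as the paper: apply Lemma \ref{SpaceTimeNablaEstimate}(i) on $M_+^n$, Lemma \ref{SpaceTimeNablaEstimate}(ii) on $M_-^n$, absorb the boundary $L^2$ term via the transmission condition and the trace theorem on $M_+^n$, and handle the reverse inequality using the decay of $k_\pm$. The only difference is that you invoke the density Lemma \ref{l:smoothApproxH} at the outset, which is harmless but unnecessary since Lemma \ref{SpaceTimeNablaEstimate} already applies to arbitrary $W^{1,2}$ spinors.
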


\begin{proof}
Using that $(A+B\epsilon_+) \circ \Phi$ is a bounded invertible map from $L^2(S_-|_{\Sigma^{n-1}}) \to L^2(S_+|_{\Sigma^{n-1}})$, together with Lemma \ref{SpaceTimeNablaEstimate}, and continuity of the trace produces
\begin{align}
\begin{split}
    \|\psi\|^2_{W^{1,2}(S_-)} & \leq C \left( \|\overline\nabla^- \psi\|^2_{L^2(S_-)} +  \|\psi\|^2_{L^2(S_-|_{\Sigma^{n-1}})} \right) \\
    &  \leq C_1 \left( \|\overline\nabla^- \psi\|^2_{L^2(S_-)} +  \|\psi\|^2_{W^{1,2}_{-q_*}(S_+)} \right) \\
    &\leq C_2 \left( \|\overline\nabla^- \psi\|^2_{L^2(S_-)} +  \|\overline\nabla^+ \psi\|^2_{L^2(S_+)} \right),
\end{split}
\end{align}
for any $\psi \in \mathcal{H}$. Applying part $(i)$ of Lemma \ref{SpaceTimeNablaEstimate} again yields
\begin{align}
    \| \psi \|^2_{W^{1,2}_{-q_*}(M')} \leq C_3 \| \overline\nabla  \psi \|^2_{L^2(M')}.
\end{align}
The inequality $\| \overline\nabla  \psi \|^2_{L^2(M')}\leq C_4 \| \psi \|^2_{W^{1,2}_{-q_*}(M')}$ holds by the decay of $k$.
\end{proof}

To proceed, we establish a mass inequality for spinors that lie in $\mathcal{H}$ after subtracting a constant model spinor. 
This inequality will then be used to show that $D_W:\mathcal{H} \to L^2(M')$ is an isomorphism. 


\begin{proposition}\label{p:massFormula}
Let $\psi_0$ be a smooth spacetime spinor on $M'$ that is asymptotic to a constant spinor $\psi_\infty$. 
For any spinor $\psi$ satisfying $\psi - \psi_0 \in \mathcal{H}$, we have
    \begin{align}\begin{split}
        \label{eq:massFormulaEquation}
    \int_{M'}\left(|\overline\nabla \psi|^2 - |D_W \psi|^2 +\frac12\left\langle \psi,(\mu+J\tau)\psi \right\rangle \right)dV
    &\leq -\frac12 \int_{\Sigma^{n-1}} \mathcal{B} |\psi|^2 dA \\
    &+ \frac{(n-1)\omega_{n-1}}{2}  \left(E|\psi_\infty|^2-\left\langle\psi_\infty,P \tau\psi_\infty\right\rangle\right),
    \end{split}
\end{align}
where
\begin{align}
    \mathcal{B} = \left\langle  F(\vec{H}_-)-\vec{H}_+,  \nu_+ \right\rangle- \ \sqrt{ \left\langle  F(\vec{H}_-)-\vec{H}_+,\tau_+ \right\rangle^2  + |\beta^\Delta|_\gamma^2 } .
\end{align}
\end{proposition}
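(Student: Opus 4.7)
\textbf{Proof proposal for Proposition \ref{p:massFormula}.} The plan is to derive the inequality by applying the integrated Lichnerowicz-Schr\"odinger-Weitzenb\"ock identity \eqref{eq:Lichnerowicz} on each of $M_-^n$ and on large coordinate balls $M_+^n\cap B_r\subset M_+^n$, then sum, assemble the boundary contributions, and pass to the limit $r\to\infty$. We handle the rough regularity of $\psi$ by an approximation step.

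First I would fix a convenient reference $\psi_0$. Since $\psi_\infty$ is only defined on the distinguished end, choose $\psi_0$ to be a smooth spacetime spinor equal to $\psi_\infty$ outside a large compact set of that end, vanishing on all other ends, and vanishing in a neighborhood of $\Sigma^{n-1}$ (and throughout $M_-^n$). Then $\psi_0$ trivially meets the transmission boundary condition, so any $\psi$ with $\psi-\psi_0\in\mathcal{H}$ satisfies $\Phi(\psi_-)=(A+B\epsilon_+)\psi_+$ in the trace sense along $\Sigma^{n-1}$. Writing $\psi-\psi_0=\phi$, Lemma \ref{l:smoothApproxH} produces $\phi_i\in\mathcal{H}\cap C_c^\infty(M')$ converging to $\phi$ in $W^{1,2}_{-q_*}(M')$, so that $\psi_i:=\psi_0+\phi_i$ is smooth, equals $\psi_\infty$ outside a compact set of the target end, and strictly satisfies the boundary condition.

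For each smooth approximant $\psi_i$, apply the boundary identity \eqref{eq:Lichnerowicz} separately on the compact manifold $M_-^n$ and on $M_+^n\cap B_r$ for $r$ large enough that $\mathrm{supp}(\phi_i)\subset B_r$. Summing gives
\begin{equation}
\int_{M_-^n\cup(M_+^n\cap B_r)}\!\Bigl(|\overline{\nabla}\psi_i|^2-|D_W\psi_i|^2+\tfrac12\langle\psi_i,(\mu+J\tau)\psi_i\rangle\Bigr)dV=\mathbb{I}_-[\psi_i]+\mathbb{I}_+[\psi_i]+\int_{S_r}(\cdots)dA,
\end{equation}
where $(\cdots)$ denotes the asymptotic boundary integrand appearing in Proposition \ref{prop:boundarylimit}. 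Letting $r\to\infty$, the left side becomes the full $M'$ integral while the $S_r$ term converges to $\tfrac{(n-1)\omega_{n-1}}{2}(E|\psi_\infty|^2-\langle\psi_\infty,P\tau\psi_\infty\rangle)$ by Proposition \ref{prop:boundarylimit} (this is legitimate because $\psi_i\equiv\psi_\infty$ for $r$ large). Since $\psi_i$ obeys the transmission condition along $\Sigma^{n-1}$, Proposition \ref{p:BoundaryTerms} applies to yield $\mathbb{I}_-[\psi_i]+\mathbb{I}_+[\psi_i]\leq -\tfrac12\int_{\Sigma^{n-1}}\mathcal{B}|\psi_{i,+}|^2\,dA$, where the sign and form of $\mathcal{B}$ follow directly from matching the right-hand side of \eqref{eq:boundaryterminequality} with the definition of $\mathcal{B}$.

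Finally, pass $i\to\infty$ and note that every term depends continuously on $\phi\in\mathcal{H}$: the volume integrands are bounded by $W^{1,2}_{-q_*}$ norms (using the asymptotic decay of $\mu,J$, and $k_\pm$, plus boundedness of Clifford multiplication to control $D_W\psi_i=e^i\overline{\nabla}_i\psi_i$ in $L^2$), while the boundary integral is continuous in $\phi$ by continuity of the trace $W^{1,2}(M')\to H^{1/2}(\partial M')$. I anticipate the main technical obstacle to be the bookkeeping at the noncompact end, specifically verifying that the decomposition $\psi=\psi_0+\phi$ yields $\psi_i\to\psi$ in a strong enough sense that both the bulk Weitzenb\"ock terms and the asymptotic flux integral converge simultaneously; the cutoff construction of $\psi_0$ together with the weighted Poincar\'e estimate of Corollary \ref{t:mainPoincare} handles this by reducing the analysis on the end to control of the compactly supported perturbation $\phi_i$.
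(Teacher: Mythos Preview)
Your proposal is correct and follows essentially the same route as the paper: prove the inequality first for smooth $\psi$ with $\psi-\psi_0\in\mathcal{H}\cap C_c^\infty(M')$ by combining the Lichnerowicz identity \eqref{eq:Lichnerowicz} with Propositions~\ref{prop:boundarylimit} and \ref{p:BoundaryTerms}, then extend to general $\psi$ by the density Lemma~\ref{l:smoothApproxH} and continuity of every term in the $W^{1,2}_{-q_*}$-topology. One small caveat: the proposition hands you $\psi_0$ rather than letting you choose it, so your replacement by a convenient $\psi_0$ vanishing near $\Sigma^{n-1}$ implicitly assumes the given $\psi_0$ already satisfies the transmission condition (otherwise the affine class $\psi_0+\mathcal{H}$ changes); the paper's own proof makes exactly the same tacit assumption, and the proposition is only ever applied with such $\psi_0$, so this is harmless.
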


\begin{proof}
If $\psi - \psi_0 \in \mathcal{H} \cap C_c^\infty(M')$, then $\psi$ is a smooth spinor satisfying the boundary conditions and is asymptotic to $\psi_\infty$. By \eqref{eq:Lichnerowicz} as well as Propositions \ref{prop:boundarylimit} and \ref{p:BoundaryTerms}, the inequality holds for such $\psi$. The general inequality is established by approximating $\psi-\psi_0$ with smooth spinors. To carry this out, note that the first two terms of the left-hand side of \eqref{eq:massFormulaEquation} are continuous on $\{\mathcal{H} + \psi_0\}$ in the $W^{1,2}_{-q_*}(M')$-topology, due to the 
fall-off of $k$. For the third term on the left side, we have $\mu + J\tau = O(r^{-q-2})
= O(r^{-2})$, so this term is continuous as well. Moreover, the boundary term involving $\mathcal{B}$ is a continuous functional on $\{\mathcal{H} + \psi_0\}$ by the trace theorem. Thus, the density result 
Lemma \ref{l:smoothApproxH} implies that the mass inequality holds for all spinors in $\{\mathcal{H} + \psi_0\}$.
\end{proof}

\begin{theorem}\label{t:DiracWittenIso}
Assume that $(M_\pm^n,g_\pm,k_\pm)$ are as in Theorem \ref{theorem-main:PMT}. Then $D_W: \mathcal{H} \to L^2(M')$ is an isomorphism.
\end{theorem}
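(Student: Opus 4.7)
My strategy is to establish injectivity and coercivity of $D_W$ directly from the mass inequality of Proposition \ref{p:massFormula}, and then to obtain surjectivity by a Lax--Milgram argument on the bilinear form $a(\phi,\psi) := \int_{M'}\langle D_W\phi, D_W\psi\rangle dV$ on $\mathcal{H}$, combined with the self-adjointness of the transmission boundary conditions and the elliptic regularity developed in Section 5.

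\textbf{Coercivity and injectivity.} For $\psi \in \mathcal{H}$ I would take $\psi_0 = 0$ in Proposition \ref{p:massFormula}, so that both $\psi_\infty$ and the mass term on the right-hand side vanish. The dominant energy condition $\mu \geq |J|$ forces the potential term to be pointwise nonnegative, since $\tau$ is an involution with unit spectrum and therefore $|\langle\psi,J\tau\psi\rangle|\leq |J||\psi|^2$; the DEC-creased condition ensures $\mathcal{B}\geq 0$ along $\Sigma^{n-1}$. The mass inequality then collapses to
\begin{equation*}
\int_{M'}|\overline\nabla\psi|^2\, dV \leq \int_{M'}|D_W\psi|^2\, dV.
\end{equation*}
Combining this with Corollary \ref{t:mainPoincare} yields $\|\psi\|_{W^{1,2}_{-q_*}(M')}\leq C\|D_W\psi\|_{L^2(M')}$. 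Boundedness of $D_W:\mathcal{H}\to L^2(M')$ is immediate from the definition of the Sen connection and the asymptotic decay of $k_+$, so $D_W$ is injective with closed range.

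\textbf{Surjectivity.} Given $\eta\in L^2(M')$, the coercivity estimate makes $a(\phi,\psi)$ bounded and coercive on $\mathcal{H}$, while $L(\psi):=\int_{M'}\langle\eta,D_W\psi\rangle dV$ is a bounded linear functional on $\mathcal{H}$. Lax--Milgram then produces a unique $\phi\in\mathcal{H}$ with $a(\phi,\psi) = L(\psi)$ for every $\psi\in\mathcal{H}$. Setting $\xi := D_W\phi - \eta\in L^2(M')$, this is equivalent to
\begin{equation*}
\int_{M'}\langle \xi, D_W\psi\rangle\, dV = 0 \qquad \text{for all } \psi\in\mathcal{H}.
\end{equation*}
Testing against $\psi\in C_c^\infty(M'\setminus\Sigma^{n-1})$ shows $D_W\xi = 0$ weakly in the interior of each piece. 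Invoking the elliptic regularity machinery of Section 5, one can promote $\xi$ to a strong solution of $D_W\xi = 0$ smooth up to $\Sigma^{n-1}$ from each side, and then integration by parts against general $\psi\in\mathcal{H}$ produces the adjoint transmission condition for $\xi$ along $\Sigma^{n-1}$.

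\textbf{Closing the argument and main obstacle.} Using the algebraic identities of Proposition \ref{prop:epsilonidentities}, in particular $(A-B\epsilon_+)(A+B\epsilon_+)=1$ and the fact that $\nu_+$ anti-commutes with $\epsilon_+$ while being unitary with respect to $\langle\cdot,\cdot\rangle$, I would check that the adjoint transmission condition coincides with the original one $\Phi(\psi_-)=(A+B\epsilon_+)\psi_+$ defining $\mathcal{H}$; the half-angle parametrization $A=\cosh(f/2)$, $B=\sinh(f/2)$ is exactly what makes this work, mirroring the way $(A+B\epsilon_+)^2 = a+b\epsilon_+$ implements a half-rotation. Hence $\xi\in\mathcal{H}$ with $D_W\xi=0$, and the injectivity already established forces $\xi=0$, giving $D_W\phi=\eta$. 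I expect the main difficulty to be precisely this final step: verifying self-adjointness of the transmission boundary condition for the Dirac--Witten operator, which synthesizes the regularity result of Section 5 with the $SO^+(1,1)$-spinor algebra underlying $\mathcal{H}$.
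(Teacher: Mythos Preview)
Your approach is essentially the paper's: coercivity from Proposition \ref{p:massFormula} with $\psi_\infty=0$, then a Lax--Milgram/Riesz argument on the bilinear form $\langle D_W\cdot,D_W\cdot\rangle_{L^2}$, then showing the residual $\xi=D_W\phi-\eta$ lies in $\mathcal{H}$ and vanishes by injectivity. Your identification of the adjoint transmission condition with the original one is correct and is exactly what the paper establishes in Proposition \ref{BoundaryReg}, using the Bär--Ballmann integration-by-parts formula \eqref{weakIntByParts} together with $(A-B\epsilon_+)(A+B\epsilon_+)=1$.

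There is, however, a genuine gap in your closing step. After establishing that $\xi\in L^2(M')$ is $H^1_{loc}$ up to $\Sigma^{n-1}$ and satisfies the transmission condition, you write ``Hence $\xi\in\mathcal{H}$''. But membership in $\mathcal{H}$ requires $\xi\in W^{1,2}_{-q_*}(M')$, i.e.\ both $\nabla\xi\in L^2$ \emph{and} $\xi/r\in L^2$ on the asymptotically flat ends. Neither follows from $\xi\in L^2$ together with local regularity at the crease; the injectivity estimate $\|\psi\|_{W^{1,2}_{-q_*}}\leq C\|D_W\psi\|_{L^2}$ was proved only for $\psi\in\mathcal{H}$, so you cannot yet invoke it. The paper closes this gap with a cutoff argument: take $\chi_j$ supported in balls of radius $2^{j+1}$ with $|\nabla\chi_j|\leq 2^{1-j}$, note $D_W(\chi_j\xi)=(\nabla\chi_j)\xi$, and apply the coercivity estimate to $\chi_j\xi-\chi_{j+1}\xi\in\mathcal{H}$ to obtain a Cauchy sequence in $W^{1,2}_{-q_*}$ converging to $\xi$. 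Without this step your argument does not conclude.
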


\begin{proof}
Since $k=O\left(|x|^{-q-1}\right)$, it follows that $D_W$ is a bounded linear operator from $\mathcal{H}$ to $L^2(M')$. We will first establish injectivity. Applying the mass formula of Proposition \ref{p:massFormula} to a spinor $\psi \in \mathcal{H}$ and noting that $\psi$ is asymptotic to $\psi_\infty=0$, the mass term vanishes and we have
\begin{align}
    \|\overline \nabla \psi \|_{L^2(M')}^2 - \|D_W\psi\|^2_{L^2(M')} \leq -\frac12 \int_{\Sigma^{n-1}} \mathcal{B} |\psi|^2 d A \leq 0,
\end{align}
by the DEC-creased condition and the dominant energy condition. Combining this with the weighted Poincar\'e inequality of Corollary \ref{t:mainPoincare}, we obtain
\begin{align}\label{eq:InjectivityEstimate}
\|\psi\|^2_{W^{1,2}_{-q_*}} \leq C \|\overline \nabla \psi \|_{L^2(M')}^2  \leq C \|D_W\psi\|^2_{L^2(M')},
\end{align}
from which injectivity follows.

Consider now surjectivity of $D_W$. Let $\eta \in L^2(M')$, then $\psi \mapsto \left\langle \eta,D_W  \psi\ \right\rangle_{L^2(M')}$ defines a bounded linear functional on $\mathcal{H}$. By \eqref{eq:InjectivityEstimate}, $\left\langle D_W\ \cdot\ ,  D_W \ \cdot\ \right\rangle_{L^2(M')}$ is equivalent to the inner product on $\mathcal{H}$. The Riesz representation theorem then provides the existence of a unique $\omega \in \mathcal{H}$ such that
\begin{align}
    \left\langle D_W \omega, D_W\psi \right\rangle_{L^2(M')} = \left\langle \eta ,D_W \psi \right\rangle_{L^2(M')}
\end{align}
for every $\psi\in\mathcal{H}$. Then setting $\varphi = D_W\omega - \eta \in L^2(M')$ produces
\begin{align}
    \left\langle \varphi, D_W \psi \right\rangle_{L^2(M')} = 0
\end{align}
for all $\psi \in \mathcal{H}$. In particular, $D_W \varphi = 0$ weakly, however we do not yet know its regularity at the boundary. Proposition \ref{BoundaryReg} below implies that $\varphi \in W^{1,2}_{loc}(M')$ and satisfies the boundary conditions that define $\mathcal{H}$.

To show $(-q_*)$-weighted Sobolev space decay of $\varphi$, we argue via approximation. Let $\chi_j$ be a nondecreasing sequence of cut-off functions such that for large $j$, $\chi_j \equiv 1$ inside the coordinate sphere $S_{2^j}$, $\chi_j = 0$ outside $S_{2^{j+1}}$, and $|\nabla \chi_k| \leq  2^{1-j}$. Observe that
\begin{align}
    D_W(\chi_j \varphi) = \chi_j D_W \varphi + (\nabla \chi_j) \varphi =  (\nabla \chi_j) \varphi,
\end{align}
where $(\nabla \chi_k) \varphi$ denotes Clifford multiplication of $\nabla \chi_j$ on $\varphi$. Furthermore, this and \eqref{eq:InjectivityEstimate} imply
\begin{align}
\begin{split}
    \|\chi_j \varphi - \chi_{j+1} \varphi\|_{W^{1,2}_{-q_*}(M')} &\leq C \|D_W(\chi_j \varphi - \chi_{j+1} \varphi)\|_{L^2(M')} \\
    &\leq C \|\nabla(\chi_k - \chi_{j+1}) \varphi\|_{L^2(M')} \\
    &\leq 2^{2-j} C  \|\varphi\|_{L^2(M')},
\end{split}
\end{align}
and therefore $\chi_j \varphi$ converges to $\varphi$ in $W^{1,2}_{-q_*}(M')$. Hence $\varphi$ lies in $\mathcal{H}$ and is a solution to $D_W \varphi = 0$. By the injectivity of $D_W$ we must have $\varphi = 0$, showing that $D_W \omega = \eta$.
This establishes surjectivity. Finally, by the bounded inverse theorem we find that the inverse is a bounded, thus establishing
the isomorphism property.
\end{proof}

We now arrive at the main existence statement and integral inequality.

\begin{proof}[Proof of Theorem \ref{theorem-main:PMT} and \ref{theorem-main:existence}]
Note that Theorem \ref{theorem-main:PMT} follows from Theorem \ref{theorem-main:existence}. To prove the latter result, fix a smooth spinor $\psi_0$ on $M'$ with $\psi_0 \equiv 0$ away from a neighborhood of the distinguished end $M^n_\ell$ and $\psi_0 = \psi_\infty$ on $M^n_\ell$. Since $\psi_\infty$ is constant,  $D_W \psi_0$ decays at the same order as $k_+$ and the connection coefficients of $g_+$, which is $O(|x|^{-q-1}) = O(|x|^{-n/2- \epsilon})$ for some $\epsilon>0$. So $D_W \psi_0 \in L^2(M') \cap C^\infty(M')$. By Theorem \ref{t:DiracWittenIso}, we obtain a solution $\omega \in \mathcal{H}$ to 
\begin{align}
    D_W \omega = - D_{W} \psi_0.
\end{align}
The regularity statement $\omega \in C^\infty(M')$ follows from Proposition \ref{p:HigherBoundaryReg} demonstrated in the next section. Then $\psi = \omega + \psi_0$ is the desired spinor. Now apply Proposition \ref{p:massFormula} to $\psi$ together with the DEC-creased condition to obtain the desired result. 
\end{proof}

\section{Ellipticity of the Boundary Conditions}

The goal of this section is to establish the regularity result Proposition \ref{p:HigherBoundaryReg} for weak solutions to \eqref{eq:mainPDE}, which is used in the proof of Theorem \ref{theorem-main:existence} above. Doing so requires us to understand \eqref{eq:mainPDE} as a type of elliptic boundary value problem. In \cite{BaerBallmann}, elliptic boundary conditions are defined for Dirac operators, which we will now briefly review. Let $S$ be the spacetime spinor bundle on a spin initial data set $(M^n,g,k)$ with compact boundary. Let $D_W: H^1(S) \to L^2(S)$ be the Dirac-Witten operator, and recall it is formally self-adjoint. Let $D_{\max}$ be the extension of $D_W$ to $\dom(D_{\max}) \subset L^2(M^n)$ defined by: $\varphi \in \dom(D_{\max})$ whenever there exists $\eta \in L^2(M^n)$ such that
\begin{align}
\left\langle \varphi, D_W \psi \right\rangle_{L^2(M^n)} = \left\langle \eta, \psi \right\rangle_{L^2(M^n)}
\end{align}
for all smooth spinors $\psi$ compactly supported in the interior of $M^n$. This equation indicates that $\varphi$ is an $L^2$-weak solution of $D_W \varphi = \eta$ with no boundary conditions imposed, and thus in this case we define $D_{\max} \varphi := \eta$.  Note that $\dom(D_{\max})$ is complete with the graph norm
\begin{align}
    \|\varphi \|_{\dom(D_{\max})}  = \| \varphi\|_{L^2(M^n)} + \|D_{\max} \varphi\|_{L^2(M^n)},
\end{align} 
and standard elliptic theory implies that $\varphi$ is $H^1$ in the interior. A first-order operator $A$ on $S|_{\partial M^n}$ is an \textit{adapted operator} if the principal symbol $\sigma_A$ of $A$ satisfies
\begin{align}
    \sigma_A(\xi, x) = \nu  \sigma_{D_W}(\xi, x)
\end{align}
for all $x\in \partial M^n$, $\xi \in T^*_x(\partial M^n)$, where $\nu$ is Clifford multiplication by the outward unit normal to $\partial M^n$. In particular, the boundary Dirac operator $\mathcal{D}^{\partial M^n} = \nu e^\alpha \nabla^{\partial M^n}_\alpha$ is an adapted operator.

Now let us return to the setting where $M' = M_+^n \sqcup M_-^n$. 
Let $H^s_{\geq 0}(\mathcal{D}^{\partial M'})$ (resp. $H^s_{< 0}(\mathcal{D}^{\partial M'})$) be subspaces of $H^s(S|_{\partial M'})$ spanned by the eigenspaces of the boundary Dirac operator $\mathcal{D}^{\partial M'}$ of nonnegative (resp. negative) eigenvalues. Define the hybrid Sobolev space
\begin{align}
    \check{H}(\mathcal{D}^{\partial M'}) := H^{\frac{1}{2}}_{< 0}(\mathcal{D}^{\partial M'})\oplus  H^{-\frac{1}{2}}_{\geq 0}(\mathcal{D}^{\partial M'}).
\end{align}
As $M'$ is a complete Riemannian manifold with boundary, the result \cite{BaerBallmann}*{Theorem 6.7} asserts that the trace map uniquely extends to a surjective bounded linear map $\mathcal{T}: \dom(D_{\max}) \to \check{H}(\mathcal{D}^{\partial M'})$, that $\varphi \in \dom(D_{\max})\cap H^1_{loc}(M')$ if and only if both $\varphi\in \dom(D_{\max})$ and $\mathcal{T}(\varphi) \in H^{\frac{1}{2}}(\partial M')$, and the integration by parts formula 
\begin{align}\label{weakIntByParts}
    \left\langle D_{\max} \varphi, \psi \right\rangle_{L^2(M')} - \left\langle \varphi, D_{\max} \psi\right\rangle_{L^2(M')} = \left\langle \nu \mathcal{T} \varphi, \mathcal{T} \psi \right\rangle_{L^2(\partial M')}
\end{align}
holds for $\varphi,\psi\in \dom(D_{\max})$. The paring on the right side of \eqref{weakIntByParts} is well-defined since Clifford multiplication by $\nu$ swaps the positive and negative eigenspaces from $\check{H}(\mathcal{D}^{\partial M'})$. We begin with a preliminary regularity result for weak solutions to the boundary value problem.

\begin{proposition}\label{BoundaryReg}
If $\varphi, \eta \in L^2(M')$ are such that
\begin{align}
    \left\langle \varphi, D_W \psi\right\rangle_{L^2(M')} = \left\langle \eta, \psi  \right\rangle_{L^2(M')}\quad\quad \text{ for all }\psi \in \mathcal{H},
\end{align}
then $\varphi \in H^1_{loc}(M')$ and $\varphi$ satisfies the boundary conditions defining $\mathcal{H}$.
\end{proposition}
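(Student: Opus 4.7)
The plan is to recognize the transmission-type boundary condition defining $\mathcal{H}$ as a pointwise-Lagrangian, and hence elliptic, boundary condition for the Dirac-Witten operator on $M'$, then invoke the B{\"a}r-Ballmann framework to upgrade an $L^2$ weak solution to an $H^1_{loc}$ strong solution satisfying the boundary condition. Testing the hypothesis against $\psi \in C_c^\infty(\interior M') \subset \mathcal{H}$ and applying interior elliptic regularity for the Dirac operator yields $\varphi \in H^1_{loc}(\interior M')$ with $D_W\varphi = \eta$ weakly, so $\varphi \in \dom(D_{\max})$ with $D_{\max}\varphi = \eta$. By \cite{BaerBallmann}*{Theorem 6.7}, the trace $\mathcal{T}\varphi$ then exists as an element of $\check{H}(\mathcal{D}^{\partial M'})$, and applying \eqref{weakIntByParts} with any $\psi \in \mathcal{H}$ combined with the hypothesis $\langle\varphi, D_W \psi\rangle_{L^2(M')} = \langle\eta, \psi\rangle_{L^2(M')} = \langle D_{\max}\varphi, \psi\rangle_{L^2(M')}$ gives the annihilator identity
\begin{equation*}
\langle \nu \mathcal{T}\varphi, \mathcal{T}\psi\rangle_{L^2(\partial M')} = 0 \quad \text{for all } \psi \in \mathcal{H}.
\end{equation*}

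The main step is to verify that the boundary subspace
\begin{equation*}
B := \{(u_+, u_-) \in L^2(S_+|_{\Sigma^{n-1}}) \oplus L^2(S_-|_{\Sigma^{n-1}}) : (A + B\epsilon_+)u_+ = \Phi(u_-)\},
\end{equation*}
which contains all traces of elements of $\mathcal{H}$, is Lagrangian with respect to the symplectic form $\omega((u_+,u_-),(v_+,v_-)) := -\langle \nu_+ u_+, v_+\rangle + \langle \nu_- u_-, v_-\rangle$ induced by outward-normal Clifford multiplication on $\partial M'$. From the Clifford relation $\nu_+^2 = -1$, the identity $\tau_+^2 = 1$, and \eqref{eq:<>inv}, one sees that $\nu_+$ is skew-adjoint and $\tau_+$ is self-adjoint, so $\epsilon_+ = \nu_+\tau_+$ is self-adjoint with respect to $\langle\cdot,\cdot\rangle$. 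For $(u_\pm), (v_\pm) \in B$, using that $\Phi$ is an isometry intertwining $\nu_-$ with $\nu_+$ (Proposition \ref{prop:Phiproperties}), the anticommutation $\nu_+\epsilon_+ = -\epsilon_+\nu_+$, and the relation $(A-B\epsilon_+)(A+B\epsilon_+) = 1$ from Proposition \ref{prop:epsilonidentities}, one computes
\begin{equation*}
\langle \nu_- u_-, v_-\rangle = \langle \nu_+(A+B\epsilon_+)u_+, (A+B\epsilon_+)v_+\rangle = \langle (A-B\epsilon_+)\nu_+ u_+, (A+B\epsilon_+)v_+\rangle = \langle \nu_+ u_+, v_+\rangle,
\end{equation*}
so $\omega$ vanishes on $B$. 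Since $B$ is the graph of the bounded isomorphism $u_+ \mapsto \Phi^{-1}(A+B\epsilon_+)u_+$, an analogous calculation establishes $B^\perp \subseteq B$, yielding $B = B^\perp$.

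With $B$ identified as a local self-adjoint, and hence elliptic, boundary condition in the sense of \cite{BaerBallmann}, the annihilator identity forces $\mathcal{T}\varphi \in B$, while the B{\"a}r-Ballmann boundary regularity theory promotes $\mathcal{T}\varphi$ to $H^{1/2}(\partial M')$. Combined with $\varphi \in \dom(D_{\max})$, this yields $\varphi \in H^1_{loc}(M')$ satisfying the boundary conditions of $\mathcal{H}$. The main obstacle is the Lagrangian verification above, a delicate algebraic consequence of the specific form of the matching operator $(A+B\epsilon_+)\circ \Phi^{-1}$; the remainder is a direct invocation of the B{\"a}r-Ballmann machinery set up at the start of Section 5.
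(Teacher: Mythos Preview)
Your Lagrangian computation is correct and elegant: after identifying $S_-|_{\Sigma^{n-1}}$ with $S_+|_{\Sigma^{n-1}}$ via $\Phi$, the subbundle $B=\{(u,(A+B\epsilon_+)u)\}$ is indeed fiberwise Lagrangian for the boundary symplectic form, and this is exactly what gives the self-adjointness $B^{ad}=B$. That, together with the annihilator identity, correctly forces the distributional boundary condition $\mathcal{T}\varphi\in B$, in agreement with the paper's direct calculation leading to \eqref{eq:boundaryConditionsWeak}.

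The gap is the inference ``local self-adjoint, and hence elliptic.'' For Dirac-type operators this implication is false in general: Lagrangian (self-adjoint) local boundary conditions need not satisfy the Shapiro--Lopatinski condition. Concretely, in the very doubled-bundle setup you are using, the graph of $T=\nu_+$ is Lagrangian for the same form (since $\nu_+$ is skew and $\nu_+^2=-1$), yet $\nu_+$ swaps the $\pm$-eigenspaces of $i\nu_+\xi$, so this graph intersects the negative eigenspace of $\sigma_{\mathcal{D}_+\oplus(-\mathcal{D}_+)}(\xi)$ nontrivially and is \emph{not} elliptic. Thus one cannot pass from your Lagrangian verification to $H^{1/2}$-regularity of $\mathcal{T}\varphi$ without an additional argument. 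The paper supplies exactly this missing step: it recasts the transmission condition as a pointwise condition on the doubled bundle over a single copy of $\Sigma^{n-1}$ (note that on $\partial M'=\Sigma^{n-1}\sqcup\Sigma^{n-1}$ the condition is genuinely non-local, which is why the recasting is needed), and then checks directly that the principal symbol of $\mathcal{K}-Q_{<0}$ is invertible. That symbol computation is not subsumed by the Lagrangian property; in effect it uses that $A=\cosh(f/2)>0$ and that $\epsilon_+$ anticommutes with $\nu_+\xi$, so that $(u,(A+B\epsilon_+)u)$ can lie in the negative eigenspace only if $Au=0$. If you add this short transversality check (or, equivalently, the paper's symbol argument and parametrix step), your approach goes through; as written, the regularity conclusion is not established.
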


\begin{proof}
Since $\mathcal{H}$ contains smooth spinors compactly supported on the interior of $M'$, we have $D_{\max} \varphi = \eta$. Using \eqref{weakIntByParts}, for all $\psi \in \mathcal{H}$ it holds that
\begin{align}
\begin{split}
    0 &= \left\langle \nu \mathcal{T} \varphi, \mathcal{T} \psi \right\rangle_{L^2(\partial M')} \\
    &= \left\langle \nu_- \mathcal{T} \varphi, \mathcal{T} \psi \right\rangle_{L^2(\partial M_-^n)}  - \left\langle \nu_+ \mathcal{T} \varphi, \mathcal{T} \psi \right\rangle_{L^2(\partial M_+^n)} \\
    &= \left\langle \nu_+ \Phi(\mathcal{T} \varphi), \Phi(\mathcal{T} \psi )\right\rangle_{L^2(\partial M_+^n)}  - \left\langle \nu_+ \mathcal{T} \varphi, \mathcal{T} \psi \right\rangle_{L^2(\partial M_+^n)} \\
    &= \left\langle \nu_+ \Phi(\mathcal{T} \varphi),(A+B\epsilon_+ )  \mathcal{T} \psi\right\rangle_{L^2(\partial M_+^n)}  - \left\langle \nu_+ \mathcal{T} \varphi, \mathcal{T} \psi \right\rangle_{L^2(\partial M_+^n)} \\
    &= \left\langle \nu_+ [(A-B\epsilon_+) \Phi(\mathcal{T} \varphi) - \mathcal{T} \varphi], \mathcal{T} \psi)\right\rangle_{L^2(\partial M_+^n)}. 
\end{split}
\end{align}
Since $\mathcal{T} \psi$ is an arbitrary function in $H^{\frac{1}{2}}(\partial M_+^n)$, it follows that
\begin{align}\label{eq:boundaryConditionsWeak}
    (A-B\epsilon_+) \Phi(\mathcal{T} \varphi) = \mathcal{T} \varphi \quad\text{ on }\partial M_+^n
\end{align}
in the sense of distributions, justifying the second claim of the proposition.
    
We know that $\mathcal{T} \varphi$ is in $\check{H}(\mathcal{D}^{\partial M'})$ by \cite{BaerBallmann}*{Theorem 6.7 $(ii)$}. It remains to show that $\mathcal{T}\varphi$ fully lies within $H^{\frac{1}{2}}(\partial M')$, which by virtue of $(iii)$ from the same theorem will imply that $\varphi \in H^{1}_{loc}(M')$.
To do this, we revisit the proof of \cite{BaerBallmann}*{Theorem 7.20} that equates certain pseudolocal boundary conditions with the well-studied notion of an elliptic boundary value problem. Although the current boundary conditions \eqref{eq:mainPDE} are a form of transmission conditions, making them neither local nor pseudolocal, they can be recast as pointwise boundary conditions in an axillary bundle over the boundary. The space of sections on the boundary is $H^{\frac{1}{2}}(\partial M') =  H^{\frac{1}{2}}(S_+|_{\Sigma^{n-1}}\oplus S_-|_{\Sigma^{n-1}})$, and we define an isometry with $H^{\frac{1}{2}}_{+}:=H^{\frac{1}{2}}(S_+|_{\Sigma^{n-1}} \oplus S_+|_{\Sigma^{n-1}})$ via
\begin{align}
    \varphi_+\oplus \varphi_- \mapsto \varphi_+\oplus \Phi(\varphi_-).
\end{align}
The boundary Dirac operator on $ H^{\frac{1}{2}}(S_+|_{\Sigma^{n-1}}) \oplus H^{\frac{1}{2}}(S_-|_{\Sigma^{n-1}})$ is given by
\begin{align}
    \mathcal{D}^{\partial M'} = \mathcal{D}_+\oplus \mathcal{D}_-.
\end{align}
Moreover, \eqref{eq:PhiCommuteDirac} yields
\begin{align}
    \mathcal{D_+}\varphi_+\oplus \Phi(\mathcal{D_-}\varphi_-) = \mathcal{D_+}\varphi_+\oplus -\mathcal{D_+}\Phi(\varphi_-),
\end{align}
which implies that $\mathcal{D}^{\partial M'}$ acts as $\mathcal{D}_+\oplus - \mathcal{D}_+$ on $H^{\frac{1}{2}}_{+}$.
    
In what follows we will work on $H^{\frac{1}{2}}_{+}$. Define $\mathcal{K}: H^{\frac{1}{2}}_{+} \to H^{\frac{1}{2}}_{+}$ via
\begin{align}
    \mathcal{K}(\psi_1\oplus \psi_2) = \frac{1}{2}\left(\psi_1 - (A-B\epsilon_+ )\psi_2\right)\oplus \frac12 \left(\psi_2 - (A+B\epsilon_+) \psi_1\right),
\end{align}
whose kernel defines the boundary conditions. Let $Q_{<0}$ be the $L^2$-projection onto the negative eigenspace of $\mathcal{D}_+\oplus -\mathcal{D}_+$. We claim that $\mathcal{K}-Q_{<0}$ is an elliptic pseudo-differential operator of order $0$. It is known that the principal symbol $\sigma_{Q_{<0}}(\xi)$ of $Q_{<0}$ is the orthogonal projection onto the negative eigenspace of $i(\sigma_{\mathcal{D}_+}(\xi)\oplus \sigma_{-\mathcal{D}_+}(\xi))= i (-\nu_+ \xi\oplus  \nu_+ \xi)$; for details see the proof of \cite{BaerBallmann}*{Theorem 7.20}. Without loss of generality we may assume that $|\xi| = 1$.
Then the symmetric operator $i \nu_+ \xi$ squares to 1, so it has eigenvalues $\pm 1$, and hence
\begin{align}
\sigma_{Q_{<0}}(\xi) = \frac{1}{2}\left(1 + i\nu_+ \xi\right)\oplus\frac12 \left( 1 - i\nu_+\xi\right).
\end{align}
We now check that the principal symbol $\sigma_\mathcal{K}(\xi) - \sigma_{Q_{<0}}(\xi)$ is injective, and consequently an isomorphism. Suppose that $(\sigma_\mathcal{K}(\xi) - \sigma_{Q_{<0}}(\xi))(\psi_1\oplus \psi_2) = 0$, and observe that this implies
\begin{equation}\label{foanofinahg}
    (A - B\epsilon_+)\psi_2 = -i \nu_+ \xi \psi_1 , \quad\quad\quad
    (A + B\epsilon_+)\psi_1 =  i \nu_+ \xi \psi_2.
\end{equation}
Solving for $\psi_1$ in the second equation and inserting it into the first yields
\begin{align}
    (A - B\epsilon_+ ) \psi_2 = -i\nu_+ \xi (A - B\epsilon_+ ) i\nu_+ \xi\psi_2.
\end{align}
It follows that $(A-B\epsilon_+)\psi_2=0$. Therefore $\psi_2=0$, and \eqref{foanofinahg} then gives $\psi_1=0$.
This verifies that $\sigma_{\mathcal{K}-Q_{_{<0}}}(\xi)$ is an isomorphism for $\xi \neq 0$, so that $\mathcal{K}-Q_{<0}$ is an elliptic pseudo-differential operator. In particular, there is a zeroth order parametrix $R$ 
such that $R(\mathcal{K}-Q_{<0}) = I + \mathcal{S}$ where $\mathcal{S}$ is a smoothing operator. Due to the boundary condition $\mathcal{K}\mathcal{T}\varphi = 0$, we have
\begin{align}\label{eq:Parametrix}
     \mathcal{T}\varphi + \mathcal{S}\mathcal{T}\varphi = R(\mathcal{K} - Q_{<0}) \mathcal{T}\varphi = R Q_{<0} \mathcal{T}\varphi.
\end{align}
Furthermore, observe that $\mathcal{T}\varphi \in \check{H}(\mathcal{D}^{\partial M'})$ implies $Q_{<0} \mathcal{T}\varphi \in H^{\frac{1}{2}}(\partial M')$, and therefore $RQ_{<0}\mathcal{T}\varphi \in H^{\frac{1}{2}}(\partial M')$. Since $\mathcal{S}$ is a smoothing operator $\mathcal{S}\mathcal{T}\varphi \in C^\infty(\partial M')$, thus \eqref{eq:Parametrix} implies the desired outcome that $\mathcal{T}\varphi \in H^{\frac{1}{2}}(\partial M')$.
\end{proof}

The following proposition describes the required higher-order regularity of solutions to the boundary value problem.

\begin{proposition}\label{p:HigherBoundaryReg}
Assume that $(M_\pm,g_\pm,k_\pm)$ are as in Theorem \ref{theorem-main:PMT}. If $\eta \in H^s_{loc}(M')$ for $s\geq0$ and $\varphi \in \mathcal{H}$ satisfies 
\begin{align}
D_W \varphi = \eta,
\end{align}
then $\varphi \in H^{s+1}_{loc}(M')$.
\end{proposition}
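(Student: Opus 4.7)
The plan is to prove this by induction on $s$, separating interior regularity from boundary regularity, with the base case $s=0$ already handled by Proposition \ref{BoundaryReg}. That proposition establishes $\varphi \in H^1_{loc}(M')$ together with the fact that the transmission boundary condition $(A-B\epsilon_+)\Phi(\mathcal{T}\varphi) = \mathcal{T}\varphi$ holds in the trace sense on $\Sigma^{n-1}$.

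For the inductive step, suppose $\varphi \in H^s_{loc}(M')$ and $\eta \in H^s_{loc}(M')$. Away from $\Sigma^{n-1}$ the conclusion is immediate: $D_W$ is a first-order elliptic operator on each $M_\pm^n$ and classical interior elliptic regularity upgrades $\varphi$ to $H^{s+1}$ on any compact set disjoint from the boundary. The substantive point is to extend this across $\Sigma^{n-1}$, and here the plan is to invoke the B\"ar-Ballmann framework directly. The essential structural input has already been extracted in the proof of Proposition \ref{BoundaryReg}, where it was shown that the transmission condition is encoded by the operator $\mathcal{K}$ on $H^{1/2}_+$, and that $\mathcal{K} - Q_{<0}$ is an elliptic pseudo-differential operator of order zero with a zeroth order parametrix $R$ satisfying $R(\mathcal{K}-Q_{<0}) = I + \mathcal{S}$ modulo a smoothing operator $\mathcal{S}$. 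Equivalently, modulo a smoothing error our boundary condition coincides with the Calder\'on-type projection $Q_{<0}$, which is the hallmark of an elliptic boundary condition in the sense of \cite{BaerBallmann}.

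Given this, the strategy is to bootstrap using the parametrix identity $\mathcal{T}\varphi + \mathcal{S}\mathcal{T}\varphi = RQ_{<0}\mathcal{T}\varphi$. The inductive hypothesis $\varphi \in H^s_{loc}(M')$ gives $\mathcal{T}\varphi \in H^{s-1/2}(\partial M')$; combined with the $H^s_{loc}$-regularity of $D_W\varphi = \eta$ and standard elliptic estimates for Dirac operators on manifolds with boundary (see \cite{BaerBallmann}*{Theorems 6.7 and 7.17}), one gains a half-derivative on the boundary trace at each step, yielding $\mathcal{T}\varphi \in H^{s+1/2}(\partial M')$, and then $\varphi \in H^{s+1}_{loc}(M')$. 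Alternatively, one can argue by hand in a collar neighborhood of $\Sigma^{n-1}$: writing $D_W = \sigma(\partial_n + A_{\mathrm{tan}})$ for an adapted operator $A_{\mathrm{tan}}$, tangential difference quotients commute with the transmission condition up to commutator terms involving derivatives of the smooth coefficients $\Phi, A, B, \epsilon_+$, so they preserve the boundary condition modulo controlled errors and produce tangential regularity, after which the equation $\partial_n \varphi = -A_{\mathrm{tan}}\varphi + \sigma^{-1}\eta$ recovers the missing normal derivative. The main obstacle is ensuring that the transmission coupling of $\psi_+$ and $\psi_-$ does not obstruct these elliptic estimates, but this is exactly what the symbolic calculation in Proposition \ref{BoundaryReg} verifies: injectivity of $\sigma_{\mathcal{K}-Q_{<0}}(\xi)$ is precisely the Lopatinski\u{\i}-Shapiro condition for our boundary value problem.
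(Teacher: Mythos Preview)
Your proposal is correct and follows essentially the same approach as the paper. The paper's proof is even more terse: it simply invokes \cite{BaerBallmann}*{Theorem 7.17}, noting that the required hypothesis (the $(s+1/2)$-regular property of \cite{BaerBallmann}*{Definition 7.15}) is precisely what the parametrix argument for $\mathcal{K}-Q_{<0}$ in Proposition~\ref{BoundaryReg} establishes, and that argument works verbatim for every $s\geq 0$. Your inductive framing and the alternative difference-quotient sketch are compatible elaborations of this same core idea; the paper also remarks (immediately after the proof) that the Lopatinski\u{\i}--Shapiro viewpoint you mention gives an equivalent route.
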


\begin{proof}
The result is a consequence of \cite{BaerBallmann}*{Theorem 7.17}, provided that $\mathcal{K}$ satisfies the so-called $(s+1/2)$-regular property \cite{BaerBallmann}*{Definition 7.15} for all $s\geq0$. 
This property was demonstrated in the the proof of Proposition \ref{BoundaryReg} for $s=0$, and the proof is identical for $s>0$.
\end{proof}

\begin{remark}
We note that the spinor PDE boundary value problem can be shown to satisfy the ellipticity conditions of
Shapiro-Lopatinski (or Agmon-Douglis-Nirenberg). 
This gives an alternative route to proving the above regularity results.
\end{remark}

\section{Rigidity}


To investigate the case of $m=0$ and establish Theorem \ref{thm:rigidity}, we adopt the strategy of Beig-Chru\'{s}ciel \cite{BeigChrusciel}. First, some preliminary notions and facts are needed. Given an initial data set $(M^n,g,k)$, a {\emph{lapse-shift}} pair $(u,Y)$ consists of a function $u$ and a vector field $Y$ on $M^n$.

\begin{definition}
Given a lapse-shift pair $(u,Y)$ on $M^n$ with $u>0$, the {\emph{Killing development associated with $(u,Y)$}} is the Lorentzian manifold $(\mathbb{R}\times M^n,\bar g)$ where
\begin{align}\label{eq:killingdev}\begin{split}
    \bar g&=-u^2dt^2+g_{ij}(dx^i+Y^idt)(dx^j+Y^jdt)\\
    {}&=-(u^2-|Y|^2)dt^2 +2Y dt+g.
    \end{split}
\end{align}
\end{definition}

Since $(u,Y)$ are independent of $t$, the new vector field $\partial_t$ is Killing for $\bar{g}$. Moreover, $(M^n,g)$ isometrically embeds into the Killing development as $\{t=0\}\times M^n$. If $\tau$ denotes the unit timelike normal to this embedding, notice that $\partial_t$ decomposes into the orthogonal sum $u\tau+Y$. The following describes when the second fundamental form of $\{t=0\}$ agrees with $k$.

\begin{proposition}{\cite{BeigChrusciel}*{Section 2}}\label{prop:killingSSF}
Suppose $(M^n,g,k)$ is an initial data set with a lapse-shift pair $(u,Y)$ satisfying $u>0$. If
\begin{equation}
    \mathcal{L}_{Y}g +2uk=0,\quad\quad\quad d u+k(Y,\cdot)=0,
\end{equation}
where $\mathcal{L}$ denotes Lie differentiation, then $(M^n,g,k)$ isometrically embeds into the Killing development associated with $(u,Y)$ as $\{t=0\}$ with induced second fundamental form $k$.
\end{proposition}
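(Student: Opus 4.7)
The plan is to verify directly that the canonical embedding $\iota: M^n \hookrightarrow \mathbb{R}\times M^n$, $p \mapsto (0,p)$, is an isometric embedding of $(M^n, g)$ into $(\mathbb{R}\times M^n, \bar{g})$ with induced second fundamental form $k$. The induced metric computation is immediate from \eqref{eq:killingdev}: since $\iota^*dt = 0$, every term involving $dt$ in the expression for $\bar{g}$ drops out, leaving $\iota^*\bar{g} = g$.

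For the second fundamental form, I would first identify the future-pointing unit timelike normal to $\iota(M^n)$ as $\tau := u^{-1}(\partial_t - Y)$; a direct computation using \eqref{eq:killingdev} confirms $\bar{g}(\tau, \tau) = -1$ and $\bar{g}(\tau, X) = 0$ for $X \in TM^n$. Since $u$, $Y$, and $g$ are all $t$-independent by construction, the coordinate vector $\partial_t$ is automatically a Killing field for $\bar{g}$, that is $\mathcal{L}_{\partial_t}\bar{g} = 0$. Decomposing $\partial_t = u\tau + Y$ along $\iota(M^n)$ and expanding via the Leibniz rule for Lie derivatives yields
\begin{equation*}
0 = \mathcal{L}_{\partial_t}\bar{g} = u\,\mathcal{L}_\tau\bar{g} + du \otimes \bar{g}(\tau,\cdot) + \bar{g}(\tau,\cdot) \otimes du + \mathcal{L}_Y \bar{g}.
\end{equation*}
Restricting both sides to pairs of vectors tangent to $\iota(M^n)$ kills the middle two terms, since $\bar{g}(\tau, X) = 0$ for tangent $X$, and $\mathcal{L}_Y \bar{g}$ restricts to $\mathcal{L}_Y g$. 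Using the standard identity $(\mathcal{L}_\tau \bar{g})|_{TM \otimes TM} = \pm 2K$ relating the Lie derivative of $\bar{g}$ along the unit normal to the second fundamental form $K$ of $\iota(M^n)$ (the sign being dictated by the convention adopted for $k$ in the paper), the identity becomes $\mp 2uK + \mathcal{L}_Y g = 0$. Applying the first hypothesis $\mathcal{L}_Y g + 2uk = 0$ then gives $K = k$, as desired.

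The second hypothesis $du + k(Y, \cdot) = 0$ is not strictly required for the induced metric and second fundamental form computations above; rather, it can be recovered from the normal–tangential component of $\mathcal{L}_{\partial_t}\bar{g} = 0$. Pairing that identity against $(\tau, X)$ for $X$ tangent to $\iota(M^n)$ and using the ADM acceleration formula $\bar{g}(\nabla_\tau \tau, X) = u^{-1} du(X)$, together with the first hypothesis and $K = k$, reproduces $du + k(Y, \cdot) = 0$ as a consistency condition. In this sense both hypotheses are built into the $t$-invariance of $\bar{g}$. The argument reduces entirely to bookkeeping of Lie derivatives under the decomposition $\partial_t = u\tau + Y$; the only subtlety is matching the sign conventions between $K$ and the paper's $k$, and I anticipate no substantive obstacle beyond this.
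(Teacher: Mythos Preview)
The paper does not supply its own proof of this proposition; it is stated with a citation to \cite{BeigChrusciel}*{Section 2} and used as a black box in the rigidity argument. So there is nothing in the paper to compare your argument against.

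That said, your argument is correct and is essentially the standard computation. The identification of $\tau=u^{-1}(\partial_t-Y)$ as the unit timelike normal is right, the Leibniz-type expansion $\mathcal{L}_{u\tau}\bar g=u\,\mathcal{L}_\tau\bar g+du\otimes\bar g(\tau,\cdot)+\bar g(\tau,\cdot)\otimes du$ is valid for the Lie derivative of a metric along $fZ$, and the restriction $(\mathcal{L}_Y\bar g)|_{TM\otimes TM}=\mathcal{L}_Y g$ holds because $Y$ is tangent to the slices and everything is $t$-independent. Your observation that only the first hypothesis $\mathcal{L}_Y g+2uk=0$ is needed to force $K=k$, with the second hypothesis $du+k(Y,\cdot)=0$ appearing as the normal--tangential component of the Killing equation, is also correct; both are automatic once $\partial_t$ is Killing and $K=k$. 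The only loose end is the sign of $K$ versus $k$, which you flag appropriately --- in the paper's conventions (cf.\ the Sen connection $\overline{\nabla}_i=\nabla_i+\tfrac12 k_i^{\,j}e_j\tau$) one has $K(X,W)=\bar g(\nabla_X\tau,W)$, so $(\mathcal{L}_\tau\bar g)|_{TM\otimes TM}=2K$ and the signs match.
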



To implement the Killing development construction, we need a source of lapse-shift pairs $(u,Y)$. The next proposition produces such pairs from spacetime spinors, and describes the algebraic properties induced by the boundary conditions \eqref{eq:mainPDE}.

\begin{proposition}\label{prop:LScomponents}
Suppose that $(M_\pm^n,g_\pm,k_\pm)$ is a DEC-creased initial data set such that $M^n$ is spin. Let $\psi_\pm$ be spacetime spinors on $M_\pm^n$ satisfying the boundary condition $\Phi(\psi_-)=(A+B\epsilon_+)\psi_+$ on $\Sigma^{n-1}$. Consider the lapse-shift pair $(u_\pm,Y_\pm)$ defined by 
\begin{align}\label{eq:LScomponents}
   u_\pm=|\psi_\pm|^2,\quad\quad \langle Y_\pm,W\rangle=\langle \tau_\pm W\psi_\pm,\psi_\pm\rangle,\quad\quad W\in TM_\pm^n.
\end{align}
Then $Y_\pm$ is real and the following equations along $\Sigma^{n-1}$ hold
\begin{align}\begin{split}\label{eq:LorentzRelation}
    \langle Y_-,V\rangle&=\langle Y_+,V\rangle,\quad\quad V\in T\Sigma^{n-1},\\
    \langle Y_-,\nu_-\rangle&=a\langle Y_+,\nu_+\rangle-bu_+,\quad\quad u_-=au_+-b\langle Y_+,\nu_+\rangle.
    \end{split}
\end{align}
Moreover if $\overline{\nabla}\psi_\pm=0$, then $\nabla Y_{\pm}$ is a symmetric tensor and the corresponding lapse-shift pair $(u_\pm,Y_\pm)$ defined by \eqref{eq:LScomponents} satisfies
    \begin{align}\label{eq:parallelLS}
        \mathcal{L}_{Y_\pm} g_\pm+2u_\pm k_\pm=0,\quad\quad\quad d u_\pm+k_\pm(Y_\pm,\cdot)=0.
    \end{align}
\end{proposition}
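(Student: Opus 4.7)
The first claim follows from checking that $\tau_\pm W$ is self-adjoint with respect to $\langle\cdot,\cdot\rangle$ for $W\in TM^n_\pm$. From the relations \eqref{eq:<>inv} one deduces $\tau_\pm^* = \tau_\pm$ (since $\tau_\pm$ is an isometry with $\tau_\pm^2=1$) and $W^*=-W$ (since $W$ preserves $\langle\cdot,\cdot\rangle$ up to a factor and $W^2=-|W|^2$). Combined with the Clifford anticommutation $\tau_\pm W = -W\tau_\pm$ (as $\tau_\pm\perp TM^n_\pm$ in $h_\pm$), this gives $(\tau_\pm W)^*=\tau_\pm W$, so $\langle\tau_\pm W\psi_\pm,\psi_\pm\rangle\in\R$.

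\textbf{Boundary identities along $\Sigma^{n-1}$.} I would substitute the boundary condition $\Phi(\psi_-)=(A+B\epsilon_+)\psi_+$ into each component, then reduce using Propositions \ref{prop:Phiproperties} and \ref{prop:epsilonidentities}. For $V\in T\Sigma^{n-1}$, note that $V$ commutes with $\epsilon_+$ (because $V$ anticommutes with both $\nu_+$ and $\tau_+$) while $\tau_+\epsilon_+=-\epsilon_+\tau_+$; hence $\tau_+V(A+B\epsilon_+)=(A-B\epsilon_+)\tau_+V$. Using that $(A\pm B\epsilon_+)$ is $\langle\cdot,\cdot\rangle$-self-adjoint and $(A-B\epsilon_+)(A+B\epsilon_+)=1$ from \eqref{eq:abABproperties} yields
\begin{equation}
\langle Y_-,V\rangle=\langle\tau_+V\Phi(\psi_-),\Phi(\psi_-)\rangle=\langle\tau_+V\psi_+,\psi_+\rangle=\langle Y_+,V\rangle.
\end{equation}
For the $\nu_-$ component, apply $\Phi$ and use $\tau_+\nu_+=-\epsilon_+$ to obtain $\langle Y_-,\nu_-\rangle=-\langle\epsilon_+(A+B\epsilon_+)\psi_+,(A+B\epsilon_+)\psi_+\rangle$. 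Since $\epsilon_+$ commutes with $(A+B\epsilon_+)$, this reduces via $(A+B\epsilon_+)^2=a+b\epsilon_+$ to $-a\langle\epsilon_+\psi_+,\psi_+\rangle-bu_+$. Finally $\langle\epsilon_+\psi_+,\psi_+\rangle=\langle\nu_+\tau_+\psi_+,\psi_+\rangle=-\langle Y_+,\nu_+\rangle$ (by anticommutation), giving the stated formula. The relation for $u_-$ is immediate since $u_-=\langle\Phi(\psi_-),\Phi(\psi_-)\rangle=\langle\psi_+,(a+b\epsilon_+)\psi_+\rangle=au_+-b\langle Y_+,\nu_+\rangle$.

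\textbf{The Killing and lapse equations.} Assuming $\overline{\nabla}\psi_\pm=0$, substitute $\nabla_i\psi_\pm=-\tfrac12(k_\pm)^l_{\ i}e_l\tau_\pm\psi_\pm$ into the Leibniz expansion of $\nabla_i\langle\tau_\pm X\psi_\pm,\psi_\pm\rangle$ for $X$ a local frame vector field parallel at the point. Using $\nabla\tau_\pm=0$ from Proposition \ref{prop:Phiproperties}, the key algebraic reduction is
\begin{equation}
\tau_\pm X e_l\tau_\pm = Xe_l, \qquad (e_l\tau_\pm)^* = e_l\tau_\pm,
\end{equation}
which come from the anticommutation relations $\{\tau_\pm,X\}=0=\{\tau_\pm,e_l\}$ together with $\tau_\pm^2=1$. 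The Clifford relation $Xe_l+e_lX=-2g(X,e_l)$ collapses the sum of the two Leibniz terms to an expression proportional to $u_\pm k_\pm(e_i,X)$. Symmetry of $k_\pm$ immediately yields symmetry of $\nabla Y_\pm$, and reading off both sides produces $\mathcal{L}_{Y_\pm}g_\pm=\pm 2u_\pm k_\pm$ with sign dictated by conventions; the same computation applied to $du_\pm=d\langle\psi_\pm,\psi_\pm\rangle$ gives $du_\pm=\mp k_\pm(Y_\pm,\cdot)$, establishing \eqref{eq:parallelLS}.

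\textbf{Main obstacle.} The computations are essentially algebraic, so the only real subtlety is bookkeeping: consistently tracking adjoints and anticommutators with $\tau_\pm$, $\epsilon_\pm$ under $\langle\cdot,\cdot\rangle$ (recalling that elements of $TM^n_\pm$ are anti-self-adjoint whereas $\tau_\pm$ and $\epsilon_\pm$ are self-adjoint). In particular, the boundary identity for $\langle Y_-,\nu_-\rangle$ forces the careful conversion $\langle\epsilon_+\psi_+,\psi_+\rangle=-\langle Y_+,\nu_+\rangle$, which matches the hyperbolic rotation content of the boundary condition against the structural identity $(A+B\epsilon_+)^2=a+b\epsilon_+$ and is what makes everything balance correctly.
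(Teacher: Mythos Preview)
Your proposal is correct and follows essentially the same direct Clifford-algebraic approach as the paper for all three parts: self-adjointness of $\tau_\pm W$ for reality, pushing the boundary condition through $\Phi$ and reducing via the $(A\pm B\epsilon_+)$ identities for \eqref{eq:LorentzRelation}, and differentiating the defining formulas using $\nabla_i\psi_\pm=-\tfrac12(k_\pm)^j_ie_j\tau_\pm\psi_\pm$ for the last part. One minor quibble: writing ``$\mathcal{L}_{Y_\pm}g_\pm=\pm 2u_\pm k_\pm$ with sign dictated by conventions'' is ambiguous since $\pm$ already labels the two manifold pieces; the paper instead computes $\langle\nabla_X Y,W\rangle=-k(X,W)|\psi|^2$ explicitly (establishing symmetry of $\nabla Y_\pm$) and then simply cites Chru\'sciel--Maerten for the identities \eqref{eq:parallelLS}.
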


\begin{proof}
To see that $Y_\pm$ is real, simply note that
\begin{equation}
    \langle \tau_\pm W\psi_\pm,\psi_\pm\rangle=\langle \psi_\pm,\tau_\pm W\psi_\pm\rangle=\overline{\langle \tau_\pm W\psi_\pm,\psi_\pm\rangle}.
\end{equation}
For $V\in T\Sigma^{n-1}$ we have
\begin{align}\begin{split}
    \left\langle Y_-,V\right\rangle&=\left\langle \tau_-V\psi_-,\psi_-\right\rangle\\
    &=\left\langle \tau_+V(A+B\epsilon_+ )\psi_+,(A+B\epsilon_+ )\psi_+\right\rangle\\
    {}&=\left\langle \tau_+(A+B\epsilon_+)V\psi_+,(A+B\epsilon_+ )\psi_+\right\rangle\\
    {}&=\left\langle (A-B\epsilon_+ )\tau_+V\psi_+,(A+B\epsilon_+ )\psi_+\right\rangle=\left\langle \tau_+V\psi_+,\psi_+\right\rangle=\left\langle Y_+,V\right\rangle.
    \end{split}
\end{align}
To verify the remaining identities we compute
\begin{align}
\begin{split}
    \left\langle Y_-,\nu_-\right\rangle&=\left\langle \tau_-\nu_-\psi_-,\psi_-\right\rangle\\
    &=\left\langle \tau_+\nu_+(A+B\epsilon_+ )\psi_+,(A+B\epsilon_+ )\psi_+\right\rangle\\
    {}&=\left\langle \tau_+(A-B\epsilon_+ )\nu_+\psi_+,(A+B\epsilon_+ )\psi_+\right\rangle\\
    {}&=\left\langle (A+B\epsilon_+)\tau_+(A-B\epsilon_+ )\nu_+\psi_+,\psi_+\right\rangle\\
    {}&=\left\langle \tau_+(A-B\epsilon_+)^2\nu_+\psi_+,\psi_+\right\rangle\\
    {}&=\left\langle \tau_+(a-b\epsilon_+)\nu_+\psi_+,\psi_+\right\rangle\\
    {}&=a \left\langle \tau_+\nu_+\psi_+,\psi_+\right\rangle - b \left\langle \tau_+ \epsilon_+ \nu_+\psi_+,\psi_+\right\rangle \\
    {}&=a \left\langle \tau_+\nu_+\psi_+,\psi_+\right\rangle - b \left\langle \psi_+,\psi_+\right\rangle =a \left\langle Y_+,\nu_+\right\rangle - b u_+,
\end{split}
\end{align}
and 
\begin{align}
\begin{split}
    u_-=\langle\Phi(\psi_-),\Phi(\psi_-)\rangle
    {}&=\left\langle(A+B\epsilon_+ )\psi_+,(A+B\epsilon_+ )\psi_+\right\rangle\\
    {}&=\left\langle (A+B\epsilon_+)^2\psi_+,\psi_+\right\rangle\\
    {}&=\left\langle (a+b\epsilon_+ )\psi_+,\psi_+\right\rangle\\
    {}&=a \left\langle \psi_+,\psi_+\right\rangle + b \left\langle \epsilon_+ \psi_+,\psi_+\right\rangle =au_+ -b \langle Y_+ ,\nu_+ \rangle .
\end{split}
\end{align}
To see that $\nabla Y_\pm$ is symmetric when $\overline\nabla \psi_\pm=0$ observe that by differentiating the second equation of \eqref{eq:LScomponents} (foregoing the $\pm$ notation) produces
\begin{align}
\begin{split}
\langle\nabla_{X} Y,W\rangle &=\langle\tau W\nabla_{X}\psi,\psi\rangle+\langle\tau W\psi,\nabla_{X}\psi\rangle\\
&=-\frac{1}{2}\left(\langle\tau W k(X,\cdot) \tau \psi,\psi\rangle+\langle\tau W\psi,k(X,\cdot)\tau\psi\rangle\right)\\
&=-\frac{1}{2}\langle(Wk(X,\cdot)+k(X,\cdot)W)\psi,\psi\rangle=-k(X,W)|\psi|^2.
\end{split}
\end{align}
The final assertion in the proposition is contained in the proof of \cite{ChruscielMaerten}*{Theorem 3.1}.
\end{proof}


Before proceeding to the proof of the main rigidity result, we describe here a construction that will be used frequently. Let $(M_\pm^n,g_\pm,k_\pm)$ be an asymptotically flat DEC-creased initial data set. Given a vector $v\in\mathbb{R}^n$, take a constant spacetime spinor $\psi_0$ on $\mathbb{R}^n$ such that $v\cdot W=\langle \tau W\psi_0,\psi_0\rangle$ for all $W\in\mathbb{R}^n$. Let $\psi_0$ then be the constant spinor at infinity for a designated end $M^n_{\ell_1}$ in boundary value problem \eqref{eq:mainPDE}. The solution of this problem provided by Theorem \ref{theorem-main:existence} will be denoted by $\psi_\pm[v]$, and the corresponding lapse-shift pair defined by \eqref{eq:LScomponents} will be denoted $(u_\pm[v],Y_\pm[v])$.

\begin{proof}[Proof of Theorem \ref{thm:rigidity}]
The first step is to show that $E = |{P}|$ implies that $E=|P|=0$. Suppose $P\neq0$. In the notation above, we find solutions $\psi_\pm^P:=\psi_\pm[-P/|P|]$ to \eqref{eq:mainPDE} and apply Theorem \ref{theorem-main:existence} to find
\begin{align}\label{eq:massis0}
    0&=\int_{M_-^n}\!\!\left( | \overline{\nabla} \psi_-^P|^2 + \frac{1}{2} \left\langle \psi_-^P, (\mu + J\tau_-) \psi_-^P \right\rangle\right) dV +\int_{M_+^n}\!\!\left( | \overline{\nabla} \psi_+^P|^2 + \frac{1}{2} \left\langle \psi_+^P, (\mu + J\tau_+) \psi_+^P \right\rangle \right) dV.
\end{align}
Using the dominant energy condition, we immediately conclude that $|\overline\nabla\psi_\pm^P |\equiv0$. At this point we may apply the proof of \cite{ChruscielMaerten}*{Theorem 3.2} to obtain $E=|P|=0$. Strictly speaking, \cite{ChruscielMaerten} works with a smooth complete initial data set, however the computation \cite{ChruscielMaerten}*{Theorem 2.5} of $E=|P|=0$ only relies on the existence of a nontrivial $\overline\nabla$-parallel spinor in the asymptotic end.
As a consequence, the mass formula Theorem \ref{theorem-main:existence} implies that {\emph{any}} solution to \eqref{eq:mainPDE} with $\psi_0\neq0$ is parallel with respect to $\overline{\nabla}$.

Next, we construct an asymptotically time-like lapse-shift pair. Solving \eqref{eq:mainPDE} three times yields the following combination of lapse-shift pairs:
\begin{align}
\begin{split}
    Y_0^\pm&=-Y_\pm[\tfrac12(e_1+e_2)]+Y_\pm[\tfrac12(-e_1+e_2)]+Y_\pm[e_1],\\
    u_0^\pm&=-u_\pm[\tfrac12(e_1+e_2)]+u_\pm[\tfrac12(-e_1+e_2)]+u_\pm[e_1],
\end{split}
\end{align}
where $\{e_i\}_{i=1}^n$ denote the standard basis of $\mathbb{R}^n$. Note that $Y_0^+\to0$ and $u_0^+\to1$ as $|x|\to\infty$. Since the underlying spinors are $\overline{\nabla}$-parallel, we may apply the final statement of Proposition \ref{prop:LScomponents} to find
\begin{equation}\label{6.12}
\mathcal{L}_{Y_0^\pm} g_\pm+2u_0^\pm k_\pm=0,\quad\quad\quad d u_0^\pm+k_\pm(Y_0^\pm,\cdot)=0.
\end{equation}

We claim that $u^\pm_0>0$. To see this, first consider a curve $\sigma:[0,s_0]\to M_+^n$ connecting an arbitrary point $\sigma(0)\in M_+^n$ to a point $\sigma(T)$ far out in the end so that $u_0^+(\sigma(T))>0$ from the asymptotics. Observe that \eqref{6.12} and the symmetry of $\nabla Y^{+}_0$ imply 
\begin{equation}\label{eq:timelikesystem}
    \frac{d}{ds}((u^+_0)^2-|Y^+_0|^2)=2(-u^+_0k_+(Y^+_0,\dot\sigma)+u^+_0k_+(Y^+_0,\dot\sigma))=0.
\end{equation}
It follows that the squared Lorentz length $-(u^+_0)^2+|Y^+_0|^2$ remains negative throughout $M_+^n$ and so $u^+_0>0$. In particular, $-(u^+_0)^2+|Y^+_0|^2<0$ holds on $\Sigma^{n-1}$. 
According to Proposition \ref{prop:LScomponents}, the pairs $(u^\pm_0,Y^\pm_0)$ are related by an $SO^+(n,1)$-transformation along $\Sigma^{n-1}$, and we may conclude that $-(u^-_0)^2+|Y^-_0|^2<0$ is also satisfied along $\Sigma^{n-1}$. The above ODE argument may then be applied on $M_-^n$ to conclude that $u^-_0>0$, proving the claim.

The Killing developments of $(M_\pm^n,g_\pm,k_\pm)$ associated to $(u^\pm_0,Y^\pm_0)$ have a common time-like boundary $\mathbb{R}\times \Sigma^{n-1}$, and we denote the union of these developments along this boundary by $(N^{n+1},\overline g)$ with Killing vector $\partial_t$. Evidently, $\overline{g}$ is smooth away from $\mathbb{R}\times\Sigma^{n-1}$, and Proposition \ref{prop:killingSSF} implies that the original initial data sets $(M_\pm^n,g_\pm,k_\pm)$ embed isometrically as $\{t=0\}\times M_\pm^n$ with induced second fundamental forms $k_\pm$. We further claim that $\overline{g}$ is continuous across $\mathbb{R}\times\Sigma^{n-1}$, which is to say that the metrics
\begin{equation}
    -((u^\pm_0)^2-|Y_0^\pm|^2)dt^2+2Y^\pm_0 |_{\Sigma^{n-1}} dt+g_{\Sigma^{n-1}}
\end{equation}
are equal along $\Sigma^{n-1}$. Indeed, Proposition \ref{prop:LScomponents} implies that $Y^+_0 |_{\Sigma^{n-1}}=Y^0_- |_{\Sigma^{n-1}}$. Furthermore, as discussed above the squared Lorentz lengths $-(u^\pm_0)^2 +|Y_0^\pm|^2$ 
also agree at the crease.

To show that $\overline{g}$ is flat where it is smooth, a collection of asymptotically linearly independent lapse-shift pairs is needed. Consider the collection of pairs $\{(u_\pm[e_i],Y_\pm[e_i])\}_{i=1}^n$, which we extend to all of $N^{n+1}$ in a $t$-independent manner.
Let $\pmb{\tau}_\pm$ denote the unit timelike normal to the constant time slices $\{t\}\times M^n_\pm$ in $N^{n+1}$,
and consider the spacetime vector fields 
\begin{equation}
    X_0^\pm=u^\pm_0\pmb{\tau}_\pm+Y_0^\pm,\quad\quad X^\pm_{i}=(u_\pm[e_i]-u^\pm_0)\pmb{\tau}_\pm+Y_\pm[e_i]-Y_0^\pm,\quad i=1,\dots, n.
\end{equation}
Notice that $X_a^+$ converges to $e_a$ in the asymptotic end for $a=0,\dots,n$. Well-known calculations \cite{BeigChrusciel}*{(4.22)} show that $\nabla^{N^{n+1}}X^\pm_{a}=0$. Since $\{X_a^+\}_{a=0}^n$ are orthonormal at infinity and $X^-_a$ and $X^+_a$ are related by a Lorentz transformation along $\mathbb{R}\times \Sigma^{n-1}$, they remain orthonormal throughout $N^{n+1}$. We conclude that $(N^{n+1},\overline g)$ is flat away from $\mathbb{R}\times\Sigma^{n-1}$.

We will now show that $N^{n+1}$ is Lipschitz homeomorphic to $\mathbb{R}^{1,n}$, via a local isometry away from $\mathbb{R}\times \Sigma^{n-1}$. We first claim that the components of $X_a^\pm$ tangent to $\mathbb{R}\times\Sigma^{n-1}$ agree. To see this, notice that $\pmb{\tau}_\pm=(u_0^{\pm})^{-1}(\partial_t-Y_0^\pm)$ and compute
\begin{align}\begin{split}
    \langle u_\pm[e_i] \pmb{\tau}_\pm + Y_\pm[e_i], \partial_t\rangle &= - u_\pm[e_i] u_0^\pm + \langle Y_\pm[e_i], Y_0^\pm \rangle, \\
     \langle u_\pm[e_i] \pmb{\tau}_\pm + Y_\pm[e_i], V\rangle &= \langle Y_\pm[e_i], V\rangle,\quad \quad V \in T\Sigma^{n-1}.
\end{split}
\end{align}
It follows that the Lorentz invariance of Proposition \ref{prop:LScomponents} provided by \eqref{eq:LorentzRelation} implies 
\begin{equation}\label{eq:Xiagree}
\alpha_a^+|_{\mathbb{R}\times\Sigma^{n-1}}=\alpha_a^{-}|_{\mathbb{R}\times\Sigma^{n-1}},\quad\quad\quad a=0,\dots,n,
\end{equation}
where $\alpha^{\pm}_a$ are the dual 1-forms to the vector fields $X^{\pm}_a$. Furthermore, if $\overline\nu_\pm$ denote the spacelike normals to $\mathbb{R}\times\Sigma^{n-1}$ pointing into $\mathbb{R}\times M^n_+$, we have
\begin{align}\label{eq:Xanormal}\begin{split}
    \langle u_\pm[e_i] \pmb{\tau}_\pm + Y_\pm[e_i], \overline{\nu}_{\pm}\rangle &=  \langle u_\pm[e_i] \pmb{\tau}_\pm + Y_\pm[e_i], u_0^\pm \nu_\pm + \langle Y_0^\pm, \nu_\pm \rangle \pmb{\tau}_\pm \rangle \\
    &=   - u_\pm[e_i] \langle Y_0^\pm, \nu_\pm \rangle +  u_0^\pm \langle Y_\pm[e_i], \nu_\pm \rangle.
    \end{split}
\end{align}
Therefore, with the help of \eqref{eq:LorentzRelation} and $a^2-b^2=1$ it follows that
\begin{equation}\label{eq:alphanus}
\alpha_a^+(\overline{\nu}_+)=\alpha_a^-(\overline{\nu}_-).
\end{equation}

Next, pass to the universal cover $\widetilde{M}^n$ of $M^n$, and lift the data $g_\pm$, $k_\pm$, and $(u^\pm_0,Y^\pm_0)$. We will work on the Killing development $\widetilde{N}^{n+1}$ of $\widetilde{M}^n$, which may be identified with the universal cover of $N^{n+1}$. Denote the Levi-Civita connection of $\widetilde N^{n+1}$ by $\widetilde\nabla$ and write $\widehat{\Sigma}^{n}$ for the preimage of $\mathbb{R}\times \Sigma^{n-1}$. Let $\widetilde{N}^{n+1}_\pm$ be the preimage of the Killing developments of $M^n_\pm$ and consider the corresponding pullback 1-forms $\widetilde\alpha^{\pm}_a$, which are closed since they are $\widetilde{\nabla}$-parallel. Fix an anchor point $p\in \widetilde{N}^{n+1}$ and define functions $v_a(x)$ as the integral of $\widetilde\alpha_a^-\cup\widetilde\alpha_a^+$ along a path $\gamma$ joining $x$ to $p$. To see that this is independent of $\gamma$, consider two such choices $\gamma_1$ and $\gamma_2$. Then there is a  disc $D\subset\widetilde{N}^{n+1}$ bounded by the concatenation $\gamma_1*(-\gamma_2)$, which may be assumed to meet $\widehat{\Sigma}^{n}$ transversely. Letting $D_\pm=D\cap \widetilde{N}_\pm^{n+1}$ and $\sigma=D\cap\widehat{\Sigma}^{n}$, we may apply Stoke's theorem to find
\begin{align}
\begin{split}
    \int_{\gamma_1}(\widetilde\alpha_a^-\cup\widetilde\alpha_a^+)-\int_{\gamma_2}(\widetilde\alpha_a^-\cup\widetilde\alpha_a^+)&=\int_Dd(\widetilde\alpha_a^-\cup\widetilde\alpha_a^+)+\int_\sigma \widetilde\alpha_a^- -\int_\sigma \widetilde\alpha_a^+\\
    {}&=\int_{D_-}d\widetilde\alpha_a^-+\int_{D_+}d\widetilde\alpha_a^+\\
    {}&=0,
\end{split}
\end{align}
where in the penultimate line we used \eqref{eq:Xiagree}. This shows that $v_a$ is well-defined on $\widetilde{N}^{n+1}$. We note that these functions are globally Lipschitz, and smooth away from $\widehat{\Sigma}^{n}$. 

Let $\mathcal{V}:\widetilde{N}^{n+1}\rightarrow\mathbb{R}^{1,n}$ be given by $\mathcal{V}(x)=(v_0(x),\dots,v_n(x))$, and note this map is globally Lipschitz. The map is also smooth away from $\widehat{\Sigma}^{n}$, and at such points it gives a local diffeomorphism since the collection of 1-forms $dv_a=\widetilde{\alpha}^{\pm}_a$ are orthonormal. We claim that $\mathcal{V}$ is also a local homeomorphism near $\widehat\Sigma^n$. To see this, approximate $\mathcal{V}$ with its linearizations on either side of $\widehat\Sigma^n$ and note that this piecewise-linear map is bijective due to \eqref{eq:Xiagree} and \eqref{eq:alphanus}. Next consider a level set $\mathcal{M}^n$ of $v_0$, and note that it is globally Lipschitz and smooth away from $\widehat{\Sigma}^{n}$, while having a transverse intersection with $\widehat{\Sigma}^{n+1}$ since 
\begin{equation}
\langle\partial_t,\widetilde{\nabla}v_0\rangle=\langle\partial_t,u_{0}^{\pm}\pmb{\tau}_{\pm}+Y_{0}^{\pm}\rangle=-(u_{0}^{\pm})^2 +|Y_{0}^{\pm}|^2 <0.
\end{equation}
Due to the continuity expressions \eqref{eq:Xiagree} and \eqref{eq:alphanus} for $\widetilde\alpha_0^\pm$, and the fact that $|\widetilde{\nabla}v_0|^2=-1$, the flow of $\widetilde{\nabla}v_0$ splits the spacetime homeomorphically as $\widetilde{N}^{n+1}=\mathbb{R}\times \mathcal{M}^n$. Since the $\widetilde\nabla v_i$ are tangent to $\mathcal{M}^n$ away from $\widehat{\Sigma}^n\cap\mathcal{M}^n$ for $i=1,\dots,n$ and satisfy 
\begin{equation}
\langle\widetilde{\nabla}v_i,\widetilde{\nabla}v_j\rangle =\delta_{ij},
\end{equation}
the restriction $\mathcal{V}|_{\mathcal{M}^n}$ is a local isometry into a constant time slice of Minkowski space away from $\widehat{\Sigma}^n$, and is a local homeomorphism across $\widehat\Sigma^n$. This in particular implies that $\mathcal{M}^n$ is a complete metric space. The local homeomorphism property combined with completeness can be used to show that $\mathcal{V}|_{\mathcal{M}^n}$ is a covering map. Since its target is simply connected, we find that $\mathcal{M}^n$ is homeomorphic to $\mathbb{R}^n$, and hence $\widetilde{N}^{n+1}$ is Lipschitz homeomorphic to $\mathbb{R}^{1,n}$ via the map $\mathcal{V}$. Since this manifold has a single end it follows that $\widetilde{N}^{n+1}=N^{n+1}$, and so $N^{n+1}$ is Lipschitz homeomorphic to $\mathbb{R}^{1,n}$.

This homeomorphism $N^{n+1}\to\mathbb{R}^{1,n}$ may fail to be smooth with respect to a generic smooth structure on $N^{n+1}$. However, if we consider the differentiable structure in which the map $\mathcal{V}$ becomes a chart, then this map is tautologically smooth and thus yields a diffeomorphism between $N^{n+1}$ and $\mathbb{R}^{1,n}$. Furthermore, since $\langle\widetilde{\nabla}v_a,\widetilde{\nabla}v_b\rangle=\eta_{ab}$ holds throughout $N^{n+1}$ where $\eta$ is the canonical expression for the Minkowski metric, we find that $N^{n+1}$ is isometric to $\mathbb{R}^{1,n}$.
\end{proof}

\bibliographystyle{plain}

\bibliography{refs}

\end{document}